\newcounter{mainthm}
\newtheorem{mainthm}[mainthm]{Theorem}
\newtheorem{thm}{Theorem}[section]
\newtheorem{lem}[thm]{Lemma}
\newtheorem{prop}[thm]{Proposition}
\newtheorem{cor}[thm]{Corollary}
\newtheorem{assumption}[thm]{Assumption}
\theoremstyle{definition}
\newtheorem{defn}[thm]{Definition}
\newtheorem{definition}[thm]{Definition}
\theoremstyle{remark}
\newtheorem{rmk}[thm]{Remark}
\newtheorem{remark}[thm]{Remark}
\newtheorem{example}[thm]{Example}
\crefname{claim}{claim}{claims}
\crefname{prop}{proposition}{propositions}
\crefname{prop}{proposition}{propositions}
\crefname{assumption}{assumption}{assumptions}
\def\C{\mathbb{C}}
\def\N{\mathbb{N}}
\def\P{\mathbb{P}}
\def\Q{\mathbb{Q}}
\def\R{\mathbb{R}}
\def\Z{\mathbb{Z}}
\DeclareMathOperator{\Aff}{Aff}
\DeclareMathOperator{\AffZ}{\Aff_{\Z}}
\DeclareMathOperator{\Alb}{Alb}
\DeclareMathOperator{\Cob}{Cob}
\DeclareMathOperator{\Cobfib}{\mathrm{Cob}_{\mathrm{fib}}}
\DeclareMathOperator{\coker}{coker}
\DeclareMathOperator{\CH}{CH}
\DeclareMathOperator{\GL}{GL}
\DeclareMathOperator{\Hom}{Hom}
\DeclareMathOperator{\id}{id}
\DeclareMathOperator{\Pic}{Pic}
\def\inv{^{-1}}
\newcommand{\mcal}{\mathcal}
\newcommand{\st}{\,\vert\,}
\newcommand{\tx}[1]{\text{#1}}
\newcommand{\vspan}[1]{\langle #1 \rangle}
\newcommand{\wdg}{\wedge}
\def\x{\times}
\newcommand{\xra}[1]{\xrightarrow{#1}}
\title[Fourier transforms and a filtration on the cobordism group of tori]{Fourier transforms and a filtration on the Lagrangian cobordism group of tori}
\author{Álvaro Muñiz-Brea}
\date{}
\begin{document}

\maketitle

\begin{abstract}
    Given a polarized tropical affine torus, we show that the fibered Lagrangian cobordism group of the corresponding symplectic manifold admits a natural geometric filtration of finite length.
    This contrasts with results of Sheridan-Smith in dimension four and the present author in higher dimensions, who showed that such group is infinite-dimensional.

    In the second half of this paper, we construct a Fourier transform between Fukaya categories of dual symplectic tori.  
    We show that, under homological mirror symmetry, it corresponds to the Fourier transform between derived categories of coherent sheaves of dual abelian varieties due to Mukai.
    We use this to show how our filtration is mirror to the Bloch filtration on Chow groups of abelian varieties, but the results may be of broader interest.
\end{abstract}

\tableofcontents

\section{Introduction}
\subsection{Motivation}\label{sec:motivation}
The symplectic topology of a symplectic manifold is often studied through its Lagrangian submanifolds.
It is a long-standing problem to understand what are all the Lagrangians submanifolds of a given symplectic manifold, often up to Hamiltonian isotopy.
As an intermediate step, V. Arnold proposed in the 80s to study Lagrangian submanifolds up to a coarser equivalence relation: Lagrangian cobordism.

\begin{figure}[b]
    \centering
    \begin{tikzpicture}[scale=0.6, thick]

\filldraw[color = black, fill = lightgray]  plot[smooth, tension=.7] coordinates {(-3,0) (-1.5,2) (2,2) (3.5,0) (3.5,-2.5) (0.5,-3.5) (-2.5,-2) (-3,0)};
        \filldraw[color = black, fill = white]  (0,-0.5) ellipse (1 and 0.5);
        
\draw (-8,0) -- (-3,0);
    
\draw (3.5,0) -- (8.5,0);
        
\draw[dashed]  (-3.5,4) rectangle (4.5,-4.5);
        
\node[scale = 1] at (-5.5,0.5) {$L_- \times \mathbb{R}_{<a_-}$};
        \node[scale = 1]  at (7,0.5) {$L_+  \times \mathbb{R}_{>a_+}$};
        \node[scale = 1]  at (-1.5,0.5) {$\pi_{\mathbb{C}}(V)$};
        \node[scale = 1] at (5,3) {$K$};
        
    \end{tikzpicture}
    \caption{Lagrangian cobordism between $L_-$ and $L_+$. The shaded region (including the two horizontal lines) depicts a typical projection of a two-ended Lagrangian cobordism $V \subset X \x \C$ to $\C$. We have included (in dashed lines) an example of a compact region $K \subset\C$ outside which $V$ is product type: in $\C \setminus K$ the projection looks like two straight lines, and living over them in $X \x \C$ we have the cylidrical Lagrangians $L_- \x \R_{<-a_-}$ and $L_+ \x \R_{>a_+}$.}
    \label{fig:lagcob}
\end{figure}
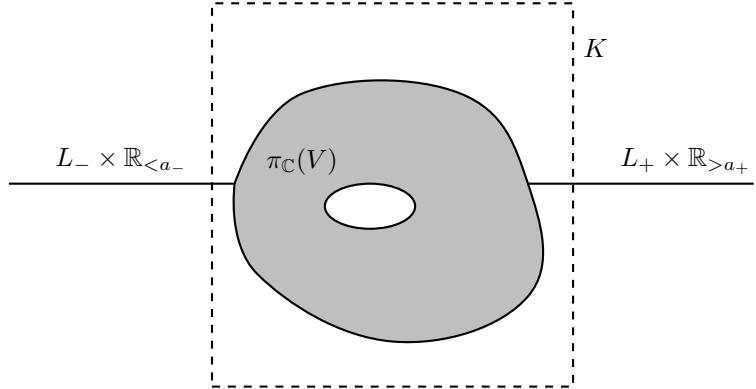

\begin{defn}[\cite{arnol1980lagrange,arnol1980lagrange2}]\label{defn:lagrangian_cobordism}
    Let $X$ be a symplectic manifold.
    Lagrangian submanifolds $L_\pm \subset X$ are said to be \emph{Lagrangian cobordant} if there exists a properly embedded Lagrangian submanifold $V \subset (X \x \C, \omega_X \oplus \omega_\C)$ such that
    \begin{equation*}
        V  = L_- \x \R_- \cup L_+ \x \R_+
    \end{equation*}
    outside (the preimage of) some compact set $K \subset \C$ (see \Cref{fig:lagcob}).

    Similarly, we say tuples $(L_0^\pm,\dots, L_{k_\pm}^\pm)$ are Lagrangian cobordant if there is a cobordism fibering over rays $\R_\pm + i m_\pm$, $0\leq m_\pm \leq k_\pm$ outside the preimage of a compact set.
\end{defn}

Lagrangian cobordism is an equivalence relation between Lagrangian submanifolds which is strictly coarser than Hamiltonian isotopy.
One obtains a linear-algebraic invariant of a symplectic manifold by considering the free group on Lagrangians modulo this relation:

\begin{defn}\label{def:cobsimple}
    The \emph{Lagrangian cobordism group} of $X$ is the quotient
    \begin{equation*}
        \Cob(X) := \Z\vspan{L \subset X} / \sim
    \end{equation*}
    where $\sim$ is the equivalence relation generated by expressions of the form
    \begin{equation*}
        L_0^- + \dots + L_{k_-}^- = L_0^+ + \dots + L_{k_+}^+
    \end{equation*}
    if there is a Lagrangian cobordism between the tuples $(L_0^\pm,\dots, L_{k_\pm}^\pm)$.
\end{defn}

\begin{remark}\label{rmk:decorations_cob}
    One often restricts the generators $L\subset X$ to Lagrangians satisfying some additional properties (such as being monotone) and/or carrying some extra data (such as a grading).
    In such case the Lagrangian cobordisms giving the relations in the cobordism group are required to also carry the extra data, and the restriction must be compatible. 
    We explain this in more detail in \Cref{sec:lagcobgroup}.
\end{remark}

For compact symplectic manifolds, Lagrangian cobordism groups are not easy to compute once the dimension  of $X$ is greater than two.\footnote{A different story holds for non-compact symplectic manifolds and their \emph{exact} Lagrangian submanifolds. With a suitable definition of a Lagrangian cobordism in such setting, Bosshard showed in \cite{bosshard2023note} that Lagrangian cobordism groups of Weinstein manifolds reduce to (relative) middle homology.}
This was made clear by Sheridan-Smith in \cite{sheridan2020rational}, who showed that any symplectic $4$-manifold with vanishing first Chern class has an infinite-dimensional (in a suitable sense) Lagrangian cobordism group.
Their result holds in fact for any symplectic manifold with vanishing first Chern class and dimension greater than two, as proved by the present author in \cite[Proposition 1.6]{muniz2024lagrangian}.

In this paper we elucidate some structure on the infinite-dimensional cobordism group of symplectic tori.

  \subsection{Main result}
Let $B$ be a tropical affine torus. 
This means $B$ is a smooth torus $B \simeq T^n$ equipped with an integrable lattice $T^*_\Z B \subset T^*B$ of covectors (see \Cref{sec:tropicalgeometry} for more details).
We consider symplectic manifolds of the type
\begin{equation}\label{eq:BgivesX(B)}
    X = X(B) := T^* B / T^*_\Z B.
\end{equation}
These come equipped with a Lagrangian torus fibration $\pi: X \to B$.
They are topologically tori, namely
\begin{equation}
    X \cong B \x T^*_0 B / T^*_{\Z,0} B \cong B \x T^n
\end{equation}
and their symplectic structure is that induced from $T^*B$, i.e. $\omega_X = \sum_i dq_i \wedge dp_i$ for $(q_i)$ coordinates on $B$ and $(p_i)$ the dual coordinates.

Among tropical affine tori we will be interested in those that are \emph{polarized}.
The precise definition is given in \Cref{def:polarized_torus}. 
For now, let us just say that this ensures that the algebro-geometric mirror to $X$ will be projective. 
Symplectically, it gives us a tropical description of constant Lagrangian sections (cf. \Cref{sec:constantsections}).

The base $B$ of the Lagrangian torus fibration $\pi: X \to B$ has a group structure induced from that of $\R^n$ (any tropical affine torus $B$ is the quotient of $\R^n$ by a lattice).
Let
\begin{equation*}
    m: B \x B \to B
\end{equation*}
denote the group operation on $B$.
\begin{definition}\label{def:pontryagin_intro}
    Let $\Cobfib(X) \subset \Cob(X)$ be the subgroup generated by Lagrangian fibers of $\pi: X \to B$.\footnote{The precise cobordism group that we consider (see \Cref{rmk:decorations_cob}) is explained in \Cref{sec:lagcobgroup}.}
    The \emph{Pontryagin product} on $\Cobfib(X)$ is the homomorphism
    \begin{align*}
        \star: \Cobfib(X) \otimes \Cobfib(X) &\to \Cobfib(X) \\
        F_b\star F_{b'} &= F_{m(b,b')}.
    \end{align*}
    That such map is well-defined is proved in \Cref{lem:pontryagin_on_cob}.
\end{definition}

The Pontryagin product turns $\Cobfib(X)$ into a ring.
The subgroup 
$$
\Cobfib(X)_{\hom} \subset \Cobfib(X)
$$
generated by elements of the form $F_p - F_q$ is easily seen to be an ideal for this ring structure.
We define the \emph{parallelotope filtration} on $\Cobfib(X)$ as the decreasing filtration generated by $\Cobfib(X)_{\hom}$, that is:
$$
    F^i\Cobfib(X) := \Cobfib(X)_{\hom}^{\star i}
$$
where we set $F^0\Cobfib(X) := \Cobfib(X)$.
In this paper we prove:
\begin{mainthm}[= \Cref{thm:filtration_terminates}]\label{mainthm:filtration_terminates}
    Let $B$ be a polarized tropical affine torus of dimension $n$ and $X = X(B)$ the corresponding symplectic manifold.
    Then the parallelotope filtration on $\Cobfib(X)$ satisfies $F^{n+1}\Cobfib(X) = 0$.
\end{mainthm}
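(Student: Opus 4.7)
The plan begins by unpacking $F^{n+1}\Cobfib(X)$. Because $F_a \star F_b = F_{a+b}$, any $(n+1)$-fold Pontryagin product of augmentation-ideal generators expands as
\begin{equation*}
    (F_{p_1} - F_{q_1}) \star \cdots \star (F_{p_{n+1}} - F_{q_{n+1}}) = \sum_{\epsilon \in \{0,1\}^{n+1}} (-1)^{n+1 - |\epsilon|} F_{q + \sum_{i} \epsilon_i v_i},
\end{equation*}
where $v_i := p_i - q_i \in B$ and $q := \sum_i q_i$. Denote this class by $P(v_1, \ldots, v_{n+1}; q)$: the formal alternating sum over the $2^{n+1}$ vertices of a parallelotope with edge vectors $v_i$ based at $q$. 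Pontryagin-multiplying by $F_q$ shows that $P(v_1, \ldots, v_{n+1}; q) = F_q \star P(v_1, \ldots, v_{n+1}; 0)$, so the theorem reduces to showing $P(v_1, \ldots, v_{n+1}; 0) = 0$ in $\Cobfib(X)$ for every choice of edge vectors.

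\textbf{Realizing parallelotopes as Lagrangian suspensions.} The strategy is to realize each $P(v_1, \ldots, v_k; 0)$ as the cobordism class of an explicit fibered Lagrangian in $X \x \C$ whose projection to $B \x \C$ traces out a tropical $k$-parallelotope. The $k=1$ case is the standard parallel-transport construction: a path in $B$ from $0$ to $v$ lifts to a Lagrangian cobordism from $F_0$ to $F_v$, whose class is $F_v - F_0 = P(v; 0)$. Iterating parallel transport in the $k$ edge directions $v_1, \ldots, v_k$ produces a Lagrangian cobordism in $X \x \C$ whose $2^k$ cylindrical ends are the fibers over the parallelotope vertices, each carrying the alternating sign dictated by $P$.

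\textbf{Dimensional collapse and main obstacle.} Lifting to $T_0 B \cong \R^n$, any $n+1$ vectors $\tilde v_1, \ldots, \tilde v_{n+1}$ admit a nontrivial $\R$-linear relation $\sum_i c_i \tilde v_i = 0$, so the formal $(n+1)$-parallelotope lies in an affine hyperplane and has zero $(n+1)$-dimensional volume. Feeding this degenerate input into the suspension construction yields a Lagrangian in $X \x \C$ whose base projection collapses into a lower-dimensional tropical figure in which the cylindrical ends pair off and cancel, giving a null-cobordism for $P(v_1, \ldots, v_{n+1}; 0)$. The main obstacle is the rigorous construction of these iterated suspension Lagrangians carrying the decorations (grading, monotonicity, etc.) required by \Cref{rmk:decorations_cob}, together with the verification that dimensional degeneracy of the tropical parallelotope forces the output Lagrangian to have no cylindrical ends at all. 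The polarization hypothesis on $B$ enters exactly to supply the constant Lagrangian sections of \Cref{sec:constantsections} that serve as the atoms of the suspension.
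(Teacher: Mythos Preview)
Your proposal has a genuine gap in the ``dimensional collapse'' step. The linear dependence $\sum_i c_i \tilde v_i = 0$ with \emph{real} coefficients $c_i$ does not produce a cobordism relation among the fibers $F_{\sum_i \epsilon_i v_i}$: the $2^{n+1}$ vertices are still generically distinct points of $B$, and the fact that the parallelotope sits inside an affine hyperplane says nothing about Lagrangian cobordisms in $X \times \C$. Concretely, already for $n=1$ and $v_1 = v_2 = v$ your argument would have to show $F_{2v} - 2F_v + F_0 = 0$; the vectors are trivially dependent, yet this relation is not a formality. More decisively, your collapse argument uses only $\dim B = n$ and never the polarization (the sentence at the end about polarization supplying constant sections is not correct---constant sections exist for any tropical affine torus). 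But the paper remarks that for a very general, hence non-polarized, $B$ the filtration is \emph{unbounded} by a result of Sheridan--Smith. So any argument that does not use the polarization in an essential way cannot succeed.

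Separately, the ``iterated suspension'' is underspecified: $k$ successive parallel transports naturally produce a Lagrangian in $X \times \C^k$, not a cobordism in $X \times \C$ with $2^k$ ends, and compressing $\C^k$ to $\C$ while keeping the Lagrangian condition is exactly the hard part you have not addressed.

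The paper's route is quite different. It rewrites $\star_\pi$ on fibers as fiberwise addition $\otimes_{\pi'}$ of constant sections for the dual fibration $X(B) \to F$. The polarization is then used to show that each constant section is Hamiltonian isotopic to the graph of a tropical rational function; via Hicks' surgery cobordism, each difference $\Gamma_i^+ - \Gamma_i^-$ becomes a tropical Lagrangian living over a codimension-$1$ tropical subvariety of $F$. A transversality argument shows these subvarieties can be chosen so that the $(n+1)$-fold fiberwise sum lives over a transverse intersection of $n+1$ codimension-$1$ tropical subvarieties, which is empty. This is where the dimension bound actually enters.
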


We sketch the proof of \Cref{mainthm:filtration_terminates} in \Cref{sec:proof_sketch}.
The significance of \Cref{mainthm:filtration_terminates} is that, even though the (fibered) Lagrangian cobordism group of tori is infinite-dimensional \cite{sheridan2020rational,muniz2024lagrangian} and thus, a priori, intractable, when $B$ is polarized one can still say something about it.
Namely, there exists a geometric filtration, whose first graded pieces 
\begin{align*}
    F^0 \Cobfib(X)/F^1 \Cobfib(X) &=  \Cobfib(X) / \Cobfib(X)_{\hom} \cong H_0(B)\\
    F^1 \Cobfib(X)/F^2 \Cobfib(X) &=  \Cobfib(X)_{\hom} / \Cobfib(X)_{\hom}^{\star 2} \cong \Alb(B)
\end{align*}
are understood, and which terminates in a \emph{finite} (in fact, $n+1$) number of steps (we prove these two isomorphisms in \Cref{lem:F0F1} and \Cref{lem:F1F2}).
The group $\Alb(B)$ is the \emph{Albanese variety of $B$}; for the purposes of this paper, it suffices to say that it is an abelian group built from the tropical geometry of the base $B$, and it is well understood.

\begin{remark}
    The assumption that $B$ is polarized cannot be dropped from \Cref{mainthm:filtration_terminates}.
    In fact, for non-polarizable $B$ not only the parallelotope filtration need not have length at most $n+1$, but in fact can be \emph{unbounded}.
    This follows from a result of Sheridan-Smith, who showed that tuples of fibers in a very general (in particular, non-polarized) tropical affine torus  are Lagrangian cobordant if and only if they coincide as unordered tuples \cite[Theorem 1.3]{sheridan2021lagrangian}.
    Note that this implies that $F^k\Cobfib(X) \neq 0$ for \emph{all} $k$.
    Indeed, clearly $F^1\Cobfib(X)\neq 0$ since any two distinct points in $B$ yield non-cobordant fibers.
    Assuming $F^k\Cobfib(X) \neq 0$, let $0\neq z:= \star_{i=1}^k(F_{b_i}- F_{b_i'}) \in F^k\Cobfib(X)$.
    By choosing any $b\in B$ sufficiently close to the origin (or, more generally, any $b$ such that $m(b,b_i)\neq b_j$ for all $i,j$), the classes $F_b \star z$ and  $F_0\star z = z$ consist of distinct ordered tuples, and thus cannot be cobordant. 
    It follows that $0 \neq (F_b - F_0)\star z \in F^{k+1}\Cobfib(X)$.
\end{remark}

 \subsection{Fourier transforms}\label{sec:introfourier}
In this section we present a construction of a \emph{Fourier transform}
\begin{equation*}
    \mathfrak F: \mcal Fuk(X(B)) \to \mcal Fuk(X(B^\vee))
\end{equation*}
for dual tropical affine tori $B,B^\vee$ (the dual of a tropical affine torus is defined in \Cref{def:dualtropicalaffinetorus}).
\begin{remark}
    While $\mathfrak F$ might be of independent interest and puts \Cref{mainthm:filtration_terminates} in a broader context (namely, relating it to classical results in algebraic geometry), it is \emph{not} needed to prove \Cref{mainthm:filtration_terminates}.
\end{remark}

The motivation for $\mathfrak F$ comes from algebraic geometry.
Given an abelian variety $A$, there exists a \emph{dual abelian variety} $A^\vee$ which is defined as the moduli-space of degree-zero line bundles on $A$.
The product $A \x A^\vee$ admits a universal line bundle $\mcal P \to A \x A^\vee$---the `Poincaré bundle'---which in turn induces a Fourier-Mukai transform \cite{huybrechts2006fourier}
$$
    \mcal F:= \Psi_{\mcal P}: D^bCoh(A) \xra{\sim} D^bCoh(A^\vee).
$$

One can get abelian varieties from polarized tropical affine tori as follows.
Starting from a tropical affine torus $B$, one considers the rigid-analytic space $Y(B)$ over the Novikov field as defined by Kontsevich-Soibelman \cite{kontsevich2006affine}.
In general, this is a rigid-analytic torus---but when $B$ is polarized, $Y(B)$ is the analytification of an abelian variety, which we denote still by $Y(B)$.
Furthermore, we show in \Cref{lem:SYZ_dual} that the dual abelian variety $Y(B)^\vee$ can be identified with $Y(B^\vee)$, where $B^\vee$ is the dual tropical affine torus to $B$.

Now recall that tropical affine tori also give rise to symplectic manifolds $X(B)$ via \Cref{eq:BgivesX(B)}.
It follows from a (more general) result of Abouzaid \cite{abouzaid2014family,abouzaid2017family,abouzaid2021homological} that there is a fully faithful embedding
$$
 \Phi_B: \mcal Fuk(X(B)) \to D^bCoh(Y(B)),
$$
and it is an equivalence when $B$ is polarized.
This yields the following natural question: `what is the mirror functor to $\mathcal F$?'.

\begin{definition}
    Let $B$ be a tropical affine torus and $B^\vee$ its dual.
    Then we define the \emph{(symplectic) Fourier transform} as the equivalence $\mathfrak{F}$ that makes the following diagram commute:
    $$
    \begin{tikzcd}
        \mcal Fuk(X(B)) \arrow[r,"\mathfrak F"] \arrow[d,"\sim","\Phi_B"'] & \mcal Fuk(X(B^\vee)) \arrow[d,"\sim","\Phi_{B^\vee}"']\\
        D^bCoh(Y(B)) \arrow[r,"\sim","\mcal F"'] & D^bCoh(Y(B)^\vee) 
    \end{tikzcd}
    $$
\end{definition}

Let us characterize $\mathfrak F$.
Consider the symplectomorphism\footnote{To be precise, $\iota$ is a \emph{graded} symplectomorphism, the grading data being that of \Cref{eq:gradediota}.}
\begin{align*}
    \iota: X(B) &\xra{\sim} X(B^\vee) \\
    (q,p) &\mapsto (-p,q).
\end{align*}
Here we  are implicitly using the natural identification of $B^\vee$ with the fibre of $X(B) \to B$, and B with the fibre of $X(B^\vee) \to B^\vee$ (see \Cref{rmk:dualtorusisfiber}).
It induces an equivalence  $\varphi_\iota:\mcal Fuk(X(B)) \to \mcal Fuk(X(B^\vee))$, and in this paper we show:
\begin{mainthm}[= \Cref{prop:symplecticfourier}]\label{mainthm:symplecticfourier}
    Let $\Phi_B : \mcal Fuk(X(B)) \to D^bCoh(Y(B))$ be the homological mirror symmetry equivalence due to Abouzaid \cite{abouzaid2021homological}.
    Then the diagram
    $$
    \begin{tikzcd}
        \mcal Fuk(X(B)) \arrow[r,"\varphi_\iota"] \arrow[d,"\sim","\Phi_B"'] & \mcal Fuk(X(B^\vee)) \arrow[d,"\sim","\Phi_{B^\vee}"']\\
        D^bCoh(Y(B)) \arrow[r,"\sim","\mcal F"'] & D^bCoh(Y(B)^\vee) 
    \end{tikzcd}
    $$
    commutes.
    In other words, the symplectic Fourier transform is the functor induced by the symplectomorphism $\iota$.
\end{mainthm}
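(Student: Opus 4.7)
My plan is to identify both $\mathfrak{F}$ and $\mcal F$ as kernel transforms and to match their kernels under a product version of Abouzaid's homological mirror symmetry. On the symplectic side, the graph $\Gamma_\iota$ of the graded symplectomorphism $\iota$ is a Lagrangian in $X(B)^- \x X(B^\vee) \cong X(B \x B^\vee)$ determining $\mathfrak{F}$ by convolution. On the B-side, $\mcal F$ is by definition the Fourier-Mukai transform with kernel the Poincar\'e sheaf $\mcal P$ on $Y(B) \x Y(B)^\vee$. Since $B \x B^\vee$ is polarized whenever $B$ is, Abouzaid's theorem provides an equivalence
$$
\Phi_{B \x B^\vee}: \mcal Fuk(X(B \x B^\vee)) \xra{\sim} D^bCoh(Y(B \x B^\vee)),
$$
and \Cref{lem:SYZ_dual} identifies the target with $D^bCoh(Y(B) \x Y(B)^\vee)$. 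In this setup the whole theorem reduces to the single isomorphism
$$
\Phi_{B \x B^\vee}(\Gamma_\iota) \cong \mcal P,
$$
together with the compatibility of HMS with the passage from kernels to integral transforms.

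To establish the kernel isomorphism I would test both sides on fibres. Thanks to \Cref{eq:gradediota}, $\Gamma_\iota$ is a graded Lagrangian section of the torus fibration on $X(B\x B^\vee)$, so its mirror is an honest line bundle on $Y(B)\x Y(B)^\vee$. The restriction to $\{y\} \x Y(B)^\vee$ corresponds on the symplectic side to applying $\mathfrak{F}$ to (the mirror of) the skyscraper $\mcal O_y$, that is, to a Lagrangian fibre $F_{b(y)} \subset X(B)$. Since $\iota(F_{b(y)})$ is the constant section of $X(B^\vee) \to B^\vee$ of height $b(y)$, which under $\Phi_{B^\vee}$ maps to the degree-zero line bundle on $Y(B)^\vee$ parametrised by $b(y)$, it coincides with $\mcal P|_{\{y\} \x Y(B)^\vee}$ by the defining universal property of the Poincar\'e sheaf. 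This identifies the two line bundles fibrewise; since both are line bundles on an abelian variety with matching restriction to every fibre, they agree globally, any potential twist by a pullback from one factor being ruled out by a parallel check on the other family of fibres.

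I expect the main obstacle to be the \emph{functorial upgrade}: passing from an isomorphism of kernels to an isomorphism of induced functors. The clean convolution-compatibility statement one would like---that for every kernel $\mcal K \in \mcal Fuk(X(B \x B^\vee))$, the induced symplectic transform corresponds under $\Phi_B$ and $\Phi_{B^\vee}$ to the Fourier-Mukai transform with kernel $\Phi_{B\x B^\vee}(\mcal K)$---is not quite available off the shelf, and assembling it requires care with $A_\infty$-bimodule conventions and with the grading, sign, and Novikov-field normalisations. If this proves too cumbersome one can bypass kernels altogether and instead check $\mathfrak{F}$ and $\Phi_{B^\vee}^{-1} \circ \mcal F \circ \Phi_B$ directly on a split-generating collection such as the family of fibres $\{F_b\}_{b \in B}$ together with a set of constant sections, verifying their action on Floer morphism spaces. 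This reduces the whole theorem to the compatibility of the two natural identifications $B^\vee \xra{\sim} Y(B)^\vee$---the SYZ one coming from \Cref{lem:SYZ_dual} and the algebraic one coming from Mukai's construction of $Y(B)^\vee$ as the moduli space of degree-zero line bundles---which is essentially the content of \Cref{lem:SYZ_dual}.
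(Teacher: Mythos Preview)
Your kernel-matching Plan A is a genuinely different route from the paper, and as you correctly anticipate, the functorial upgrade from ``$\Phi_{B\times B^\vee}(\Gamma_\iota)\cong\mcal P$'' to ``the induced transforms agree'' is the real obstacle; the paper does not attempt this and it is not available off the shelf in the family-Floer setting.

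The paper's argument is closer in spirit to your Plan B, but with a key simplification you are missing: rather than verifying agreement on a split-generating collection \emph{and on morphism spaces} (which still leaves the problem of upgrading from ``agree on generators and homs'' to ``agree as functors''), the paper invokes a rigidity result for derived equivalences. Namely, Huybrechts' theorem (\cite[Corollary~5.23]{huybrechts2006fourier}, recorded in the paper as \Cref{prop:action_on_sky_characterizes}) says that any equivalence $D^bCoh(Y_1)\to D^bCoh(Y_2)$ sending skyscrapers to skyscrapers is of the form $L\otimes f_*(-)$ for an isomorphism $f$ and a line bundle $L$. The paper then observes (\Cref{lem:properties_Poincare_characterize}) that the Fourier--Mukai transform $\Psi_{\mcal L_P,Y^\vee\to Y}$ is the unique equivalence satisfying \Cref{eq:properties_FM_sky,eq:properties_FM_str}; so it suffices to check that $\Phi_B\circ\varphi_\iota\circ\Phi_{B^\vee}^{-1}$ satisfies these two properties, i.e.\ that $\varphi_\iota$ sends fibres to constant sections and the zero-section to the fibre over $0$ with the correct grading shift. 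This is exactly the object-level computation you sketch, but the Huybrechts rigidity input eliminates any need to touch Floer morphism spaces or $A_\infty$-structures.

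In short: your fibrewise check on objects is the right computation, but the clean way to promote it to an identification of functors is not split-generation---it is the structure theorem for equivalences acting as the identity on skyscrapers.
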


\begin{remark}
    While the algebraic Fourier transform $\mcal F$ is \emph{not} induced by an isomorphism $A \to A^\vee$ (if it were, skyscraper sheaves of points would map to skyscraper sheaves of points, rather than line bundles), \Cref{mainthm:symplecticfourier} shows that its natural mirror $\mathfrak F$ is induced by a (graded) symplectomorphism $X(B) \to X(B^\vee)$.
\end{remark}

\subsection{Connections between \Cref{mainthm:filtration_terminates} and Chow groups}\label{sec:connections_ag}
The reader familiar with Chow groups in algebraic geometry will have noticed a similarity between \Cref{mainthm:filtration_terminates} and a classical result of Bloch for the Chow group of an abelian variety \cite{bloch1976some}. 
In this section we make this connection explicit.

Let us start by recalling the definition of Chow groups.
One way to understand the algebraic geometry of an algebraic variety is through a study of its algebraic subvarieties.
The collection of all subvarieties is too large to be studied, so one often imposes some equivalence relation among them.
Two $k$-dimensional subvarieties $Z_\pm \subset Y$ of an algebraic variety $Y$ are said to be \emph{rationally equivalent} if there exists a $(k+1)$-dimensional subvariety $W \subset Y \x \P^1$ dominant over $\P^1$  such that
$$
W \cap (Y \x \{\pm\infty\}) = Z_\pm.
$$
Let $Z_k(Y)$ denote the free abelian group generated by $k$-dimensional subvarieties of $Y$ and $R_k(Y) \subset Z_k(Y)$ the subgroup generated by differences $Z_+ - Z_-$ of rationally equivalent subvarieties.
The \emph{Chow groups} of $Y$ are the quotients
$$
\CH_k(Y) := Z_k(Y) / R_k(Y).
$$

Chow groups enjoy many nice properties, among them functoriality: for any morphism $f: Y_1 \to Y_2$ between algebraic varieties, one gets an induced morphism 
$$
f_*: \CH_k(Y_1) \to \CH_k(Y_2).
$$
The action on this morphism on a subvariety is either taking its image if it has the right dimension, or zero if the image has smaller dimension (we refer the reader to \cite{fulton2013intersection} for more details on Chow groups).

Now let $Y = A$ be an abelian variety. 
The group structure $\mu : A \x A \to A$ induces by functoriality a homomorphism between Chow groups, which composed with the Künneth map $\CH_i(A)\otimes \CH_j(A) \to \CH_{i+j}(A \x A)$ 
yields a homomorphism 
\begin{equation}\label{eq:CH_pontryagin}
    \star : \CH_i(A) \otimes \CH_j(A) \to \CH_{i+j}(A)
\end{equation}
which is known as the \emph{Pontryagin product}.
We will be interested in the subgroup $\CH_0(A)$; note this is a subring for the Pontryagin product.
The subgroup $\CH_0(A)_{\hom} \subset \CH_0(A)$ of $0$-cycles of degree zero is an ideal for the Pontryagin product, and we can define the Bloch filtration on $\CH_0(A)$ as the decreasing filtration generated by $\CH_0(A)_{\hom}$:
$$
F^i\CH_0(A) := \CH_0(A)_{\hom}^{\star i}.
$$
Bloch showed:
\begin{thm}[\cite{bloch1976some}]\label{thm:bloch}
    Let $A$ be an abelian variety of dimension $n$ over an algebraically closed field.
    Then the Bloch filtration on $\CH_0(A)$ satisfies $F^{n+1}\CH_0(A) = 0$.
\end{thm}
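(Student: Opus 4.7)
The plan is to reduce Bloch's theorem to the vanishing of explicit alternating sums on $A$ and then exploit the action of the multiplication-by-$N$ isogenies $[N] : A \to A$ on $\CH_0(A)$. Since $\CH_0(A)_{\hom}$ is generated as an abelian group by the classes $[a] - [0]$ for $a \in A$, the subgroup $F^{n+1}\CH_0(A)$ is generated by the $(n+1)$-fold Pontryagin products
\[
\Delta(a_0, \ldots, a_n) := \prod_{i=0}^n \bigl([a_i] - [0]\bigr) = \sum_{S \subseteq \{0, \ldots, n\}} (-1)^{n+1-|S|} \Bigl[\sum_{i \in S} a_i\Bigr],
\]
so the task reduces to showing each such $\Delta$ is rationally trivial on $A$. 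Equivalently, writing $\sigma : A^{n+1} \to A$ for the summation morphism, one must show $\sigma_*\gamma = 0$, where $\gamma = \boxtimes_i([a_i]-[0]) \in \CH_0(A^{n+1})$ is supported on the finite set $\prod_i \{0, a_i\}$.

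The strategy I would follow is Bloch's, based on analyzing the action of $[N]$ on $\CH_0(A)$. Using the identity $([a]-[0]) \star \alpha = (T_a)_*\alpha - \alpha$, with $T_a$ translation by $a$, together with the relation $[N] \circ T_a = T_{Na} \circ [N]$, a short induction on $k$ shows that $[N]_*$ preserves each $F^k$ and acts on the graded piece $F^k/F^{k+1}$ by multiplication by $N^k$. In particular the endomorphism
\[
P_N := \prod_{k=0}^n \bigl([N]_* - N^k \cdot \id\bigr)
\]
of $\CH_0(A)$ maps into $F^{n+1}$. Rationally, $[N]_*$ can be simultaneously diagonalized for all $N$ (Beauville's eigenspace decomposition), and the eigenvalues on $\CH_0(A)_\Q$ are contained in $\{N^0, N^1, \ldots, N^n\}$ because $A$ is $n$-dimensional; since the polynomial $\prod_{k=0}^n (X - N^k)$ then annihilates $[N]_*$ on $\CH_0(A)_\Q$, one concludes $F^{n+1}\otimes\Q = 0$.

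The main obstacle is upgrading this rational vanishing to the integral statement. Bloch's original argument in \cite{bloch1976some} achieves this by a more refined inductive analysis, tracking the divisibilities of $[N]_* - N^k\cdot\id$ on each graded piece, together with divisibility of the Albanese (which controls $F^1/F^2$) and a specialization reduction to the geometric generic fibre of a suitable family. An alternative geometric route would be to construct explicit rational equivalences on $A^{n+1}$ annihilating $\gamma$, exploiting the $n^2$-dimensional fibres of $\sigma : A^{n+1} \to A$ to interpolate among the $2^{n+1}$ support points of $\gamma$ via $1$-parameter algebraic families determined by a polarization on $A$; the difficulty there is ensuring that the alternating signs in $\Delta$ are properly accounted for in the moving process.
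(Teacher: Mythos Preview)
The paper does not give its own proof of this statement. Theorem~\ref{thm:bloch} is quoted from \cite{bloch1976some} purely as background and motivation: the paper's Theorem~A is presented as a symplectic analogue of Bloch's result, and at the end of \S\ref{sec:connections_ag} the author explicitly notes that Bloch's original argument does not use Fourier transforms, while Beauville's later reformulation \cite{beauville2006quelques} does, and that it is the latter viewpoint which inspired the techniques here. So there is nothing in the paper to compare your attempt against.

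On the content of your sketch: the reduction to the alternating sums $\Delta(a_0,\dots,a_n)$ and the verification that $[N]_*$ preserves the filtration and acts by $N^k$ on $F^k/F^{k+1}$ are correct and are indeed the backbone of the argument. Your appeal to Beauville's eigenspace decomposition to bound the eigenvalues of $[N]_*$ on $\CH_0(A)_\Q$ is, however, both anachronistic (Beauville's work postdates Bloch's) and close to circular: the assertion ``the eigenvalues are contained in $\{N^0,\dots,N^n\}$ because $A$ is $n$-dimensional'' is essentially the rational form of the theorem you are trying to prove, and Beauville's bound is derived via the Fourier transform rather than following formally from the dimension. You are right that the integral upgrade is the crux, but your proposal only points toward Bloch's argument there without supplying it, so what you have written is an outline of the landscape rather than a proof.
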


The connection between \Cref{mainthm:filtration_terminates} and \Cref{thm:bloch} is made apparent using homological mirror symmetry.
Recall from \Cref{sec:introfourier} that, using Abouzaid's homological mirror symmetry result \cite{abouzaid2021homological}, one can construct a fully faithful embedding 
$$
\Phi_B : \mcal Fuk(X(B)) \to D^bCoh(Y(B))
$$
for a tropical affine torus $B$.
The space $Y(B)$ is a rigid-analytic torus; and when $B$ is polarized, $Y(B)$ is the analytification of an abelian variety $A$ and $\Phi_B$ is an equivalence.
A rigid-analytic GAGA principle says that $D^bCoh(Y) = D^bCoh(A)$ (the former being built from analytic coherent sheaves, the latter from algebraic coherent sheaves), so that putting everything together one has an equivalence $\mcal Fuk(X) \simeq D^bCoh(A)$.

Now, in favorable situations, work of Biran-Cornea \cite{biran2014lagrangian,biran2021lagrangian} guarantees the existence of a map 
$$
    \Cob(X) \to K_0(\mcal Fuk (X))
$$
where $K_0(-)$ denotes the Grothendieck group of a triangulated category.
We can thus consider the composite map 
$$
\begin{tikzcd}
    K_0(\mcal Fuk(X)) \arrow[r,"\sim"] & K_0(D^bCoh(A)) \arrow[d,"ch"] \\
    \Cob(X) \arrow[u] \arrow[r,dashed,"\Psi"] & \CH_*(A)_\Q
\end{tikzcd}
$$
where $ch$ is the Chern character.
It restricts to a map $\Cobfib(X) \to \CH_0(A)$, and we show in \Cref{prop:Cob_to_CH_ringhom} that it is a ring homomorphism (where both groups are equipped with their corresponding Pontryagin products).

\begin{remark}
    The homomorphism $\Cobfib(X) \to \CH_0(A)$ is \emph{not} known to be injective, so we would not be able to conclude that the parallelotope filtration on $\Cobfib(X)$ terminates from the analogous result for Chow groups.
    Furthermore, in the present setting the construction of a map $\Cob(X) \to K_0(\mcal Fuk(X))$ has not yet appeared in the literature.
\end{remark}

Lastly, we note that Bloch does not use any kind of Fourier transform to prove his result.
Instead, Beauville is the one that reformulates Bloch's result in terms of Fourier-Mukai transforms in \cite{beauville2006quelques}.
It is their approach that motivated the techniques developed in this paper.

\subsection{Idea of the proof of \Cref{mainthm:filtration_terminates}}\label{sec:proof_sketch}
The proof of \Cref{mainthm:filtration_terminates} is based on two main ideas, both geometric.

The first is the realisation that one can exchange questions about Pontryagin products of fibers to questions about fiberwise addition of sections (of a different Lagrangian torus fibration).
Indeed, note that $X(B) \to B$ has an additional natural Lagrangian torus fibration given by projecting to the fibers (since $X(B) \simeq B \x F$ is trivial), and that \emph{fibers of $X(B) \to B$ are flat sections of $X(B) \to F$}.
We can summarize this in the following diagram:
$$
\begin{tikzcd}
    & F \ar[d, hookrightarrow] &  \\
    B \ar[r, hookrightarrow] & X(B) \ar[d, "\pi"'] \ar[r, "\pi'"] & F \\
    & B & .
\end{tikzcd}
$$
Furthermore, the Pontryagin product on $\Cobfib(X)$ is defined by using the group structure of the base $B$ (see \Cref{def:pontryagin_intro}). 
On the other hand, also by construction (see \Cref{sec:fiberwiseaddition}), the fiberwise addition operation on $X(B)$ coming from the fibration $X(B) \to F$ is defined using the group structure on its fibers---which are precisely $B$.
That is, emphasizing with a subscript the fibration used to define each operation, we have that 
\begin{equation}\label{eq:pontryagin_vs_fiberwise}
    F_1 \star_{\pi} F_2 = F_1 \otimes_{\pi'} F_2
\end{equation}
for any two fibers $F_i$ of $X(B) \to B$.
One can thus translate the iterated fiberwise addtion of \Cref{mainthm:filtration_terminates} to a question about fiberwise addition of flat sections of $X(B) \to F$.
\begin{remark}
    Up to now we have only used the fact that $B$ is a tropical affine torus, and not that it is polarized.
\end{remark}

The second idea is to study these flat sections of $\pi'$ using the tropical geometry of the base $F$.
We show in \Cref{cor:constantsectionistropical} that when $B$ is polarized flat sections are Hamiltonian isotopic to graphs of tropical rational functions. 
It is here that polarizability is essential---we illustrate this in \Cref{ex:polarization_2_torus}.
Using Hicks' surgery cobordism \cite{hicks2020tropical} we can turn differences $\Gamma(df_1) - \Gamma(df_2)$ of flat sections (recall these correspond to the generators $F_b - F_{b'}$ of $\Cobfib(X)_{\hom}$) into Lagrangians lifting tropical varieties of codimension $\geq 1$ (\Cref{cor:difference_of_flat_sections_codimension}), so that our original problem has now been reduced to studying fiberwise additions of tropical Lagrangians.
To finish, note that fiberwise addition of $n+1$ Lagrangians lifting \emph{transverse} tropical subvarieties of codimension $\geq 1$ is necessarily empty.
We show in \Cref{lem:transversality} that we can arrange our tropical subvarieties to be transverse, thus completing the proof.

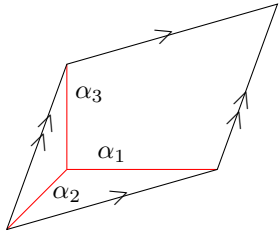
\begin{figure}
    \centering
    \begin{tikzpicture}
    \draw[red] (-0.8, -0.8) -- (0,0);
    \draw[red] (0, 1.4) -- (0,0);
    \draw[red] (2, 0) -- (0,0);
    
    \draw (-0.8, -0.8) -- (0, 1.4) -- (2.8, 2.2) -- (2,0) -- (-0.8, -0.8);

    \node[rotate = 70] at (-0.3,0.5) {$>>$};
    \node[rotate = 70] at (2.3,0.8) {$>>$};

    \node[rotate=20] at (1.3,1.8) {$>$};
    \node[rotate=20] at (0.7,-0.35) {$>$};
    \node at (0.6,0.2) {$\alpha_1$};
    \node at (0.3,1) {$\alpha_3$};
    \node at (0,-0.33) {$\alpha_2$};
    \end{tikzpicture}
    \caption{Family of polarized tropical affine tori parametrized by $(\alpha_1,\alpha_2,\alpha_3) \in \R^3_{>0}$. The labels $\alpha_i$ indicate the length of each edge, which have directions $(1,0),(-1,-1)$ and $(0,1)$ respectively.}
    \label{fig:polarised_torus}
\end{figure}

\begin{example}\label{ex:polarization_2_torus}
    To see the importance of a polarization on $B$ we need to consider $\dim B = 2$ (every $1$-dimesional torus is polarizable, so it would not illustrate the point).
    
    We consider the family of tropical tori
    $$
        B = B_{\alpha} = \R^2 / \Lambda_{\alpha}, \quad \Lambda_{\alpha} = \Z \vspan{(\alpha_1+\alpha_2, \alpha_2), (\alpha_2, \alpha_2+\alpha_3)}
    $$
    for $\alpha = (\alpha_1,\alpha_2,\alpha_3) \in \R^3_{>0}$ (see \Cref{fig:polarised_torus}).
    This is a $3$-dimensional family in the $4$-dimensional space of tropical affine tori, all of which are polarized (\Cref{ex:polarization_on_torus}).
    Then, denoting by $t_b = m(b,-): B \to B$ the translation by $b \in B$ and by $\phi\in H^0(C^\infty/\Aff)$ an strictly convex function (for instance, the square of the norm defined by the metric $g$ in \Cref{eq:metricfrompolarisation}), we can use the tropical sections
    $$
    \Gamma(d\Phi_{b}), \quad \Phi_b := \phi - t_b^*\phi
    $$
    to parametrise flat Lagrangian sections of $X(B) \to B$.\footnote{We explain in \Cref{sec:lagcobgroup} how to make sense of the graph of a tropical polynomial (which is not a smooth function and hence does not have, a priori, a well-defined differential).}
    The reason is essentially the following: one can easily check that the Lagrangian sections $\Gamma(d\Phi_b)$ are Hamiltonian isotopic to flat sections, and furthermore each of them has a different Hamiltonian isotopy class, so they parametrize the whole space of flat sections.
    A rigorous proof of this can be found in \Cref{sec:constantsections}.

    Now suppose instead that $B$ is a sufficiently irrational torus that does not contain any tropical curve, i.e. to which \cite[Theorem 1.3]{sheridan2021lagrangian} applies (explicit examples are given in that paper).
    If we consider the dual torus $B^\vee$ (this is defined precisely in \Cref{def:dualtropicalaffinetorus}, but for now one can think of $B^\vee\simeq F$ as the fiber $F\subset X(B) \to B$), then \Cref{mainthm:filtration_terminates} cannot apply to $B^\vee$, at least for the class of cobordism considered in the statement of \cite[Theorem 1.3]{sheridan2021lagrangian}.
    Indeed, \Cref{mainthm:filtration_terminates} implies the existence of non-trivial cobordisms between flat sections of $X(B) \to B$, which can be thought of as cobordisms between fibers of $X(B^\vee) \to B^\vee$. 
    By contrast,  \cite[Theorem 1.3]{sheridan2021lagrangian} says that there are no non-trivial cobordism between fibers of $X(B^\vee) \to B^\vee$.
\end{example}

 \paragraph{{\bf Acknowledgements}}
I am very grateful to my supervisor Nick Sheridan for many useful discussions and ideas that helped me to complete this work.
He has also been very generous with his time, patient and supportive.
I am also thankful to Jeff Hicks for developing and teaching me about Lagrangian surgery, a technique that was crucial for the proofs I present here, as well as acting like a second advisor in many aspects of my research.
The author was supported by an ERC Starting Grant (award number 850713-HMS).

\section{Background}
\subsection{Tropical geometry}\label{sec:tropicalgeometry}
Let $\Aff(\R^n) = \R^n \rtimes \GL(n,\Z)$ denote the group of integral affine transformations of $\R^n$.
Such transformations are of the form
$$
x \in \R^n \mapsto Ax + b \in \R^n
$$
for $A \in \GL(n,\Z)$ and $b \in \R^n$.
\begin{defn}
    A \emph{tropical affine manifold} is a smooth manifold $B$ together with an atlas of \emph{affine charts}, i.e. charts $\{\varphi_i: U_i \to \R^n\}$ such that the transition functions $\varphi_i\circ\varphi_j\inv$ are in $\Aff(\R^n)$.
\end{defn}
\begin{example}\label{ex:tropicalaffinetorus}
    Consider the smooth manifold $\R^n$ with the standard lattice subbundle 
    $$
    T_\Z\R^n := \R^n \x \Z^n \subset \R^n \x \R^n \cong  T\R^n.
    $$
    Given any other $n$-lattice $\Lambda \subset \R^n$, the quotient
    $$
    B := \R^n / \Lambda,
    $$
    where $\Lambda$ acts on $\R^n$ by translation, inherits an affine structure by projecting $T_\Z \R^n$.
    The tropical affine manifold $(B, T_\Z B)$ is called a \emph{tropical affine torus}.
\end{example}

A tropical affine manifold comes equipped with the following important sheaves:
\begin{itemize}
    \item $\Aff \subset C^\infty$ is the \emph{sheaf of affine functions on $B$}. Local sections of $\Aff$ are smooth functions $f:U \to \R$ that in a local affine chart of $B$ are of the form 
    $$
    f(x_1,\dots,x_n) = \sum a_i x_i + b
    $$
    for some $a_i \in \Z$ and $b \in \R$.

    \item $T^*_\Z B \subset T^*B$ is a locally constant sheaf of $\Z$-modules defined as the image of $\Aff$ under the differential $d:C^\infty \to T^*B$.
    
    \item $T_\Z B \subset TB$ is a locally constant sheaf of $\Z$-modules that is dual to $T^*_\Z B$: a vector $v \in TB$ is in $T_\Z B$ if and only if $\alpha(v) \in \Z$ for all $\alpha \in T^*_\Z B$.
\end{itemize}
These sheaves fit into short exact sequences
\begin{equation}\label{eq:sesaffcinfty}
    0 \to \Aff \to C^\infty \to C^\infty / \Aff \to 0
\end{equation}
\begin{equation}\label{eq:sesRAff}
0 \to \R \to \Aff \xra{d} T^*_\Z B \to 0.
\end{equation}
In fact, \Cref{eq:sesRAff} gives another definition of $T^*_\Z B$.

We will also be interested in \emph{tropical subvarieties} of $B$.
These are weighted rational polyhedral complexes in $B$ of pure dimension satisfying the \emph{balancing condition}: if $V$ is a weighted rational polyhedral complex of pure dimension $k$, then for every $(k-1)$-dimensional face $\sigma$ of $V$ one has 
$$
\sum_{\tau \supset \sigma} w_\tau v_{\tau} = 0 \in \R^n / T \sigma
$$
where the sum runs over all top-dimensional faces $\tau$ that contain $\sigma$, $w_\tau$ is the weight of $\tau$, and $v_\tau$ is a non-zero vector tangent to $\tau$ and normal to $\sigma$; see \cite{mikhalkin2006tropical} for a precise definition.

\subsection{Tropical Lagrangian sections}\label{sec:tropicallagrangiansections}
Tropical geometry is intimately related to symplectic geometry via the following construction.
Let $B$ be a tropical affine manifold.
The natural symplectic form on the cotangent bundle $T^*B$ is invariant under translation, hence the quotient manifold
$$
X(B) := T^*B / T^*_\Z B
$$
inherits a symplectic form.
The projection 
$$
\pi: X(B) \to B
$$ 
is a Lagrangian torus fibration.

One can construct Lagrangian sections of $\pi$ using the tropical geometry of the base $B$ as follows.
For the case of the cotangent bundle $T^*B \to B$, any smooth function $f \in H^0(C^\infty)$ defines a Lagrangian section $\Gamma(df) \subset T^*B$.
Now note that, by definition, for $f \in \Aff$ we have $\Gamma(df) \in T^*_\Z B$, hence $\Gamma(df) = \Gamma_0$ is just the zero-section of $X(B)$.
It follows that, given $f \in H^0(C^\infty/\Aff)$, (the graph of) its differential is well-defined as a section of $T^*B/T^*_\Z B$.
Conversely, every Lagrangian section is locally the graph of a smooth function, and such smooth function is unique up to addition of an affine function.
In other words, the group $H^0(C^\infty/\Aff)$ parametrises Lagrangian sections of $\pi$.

Global sections of $C^\infty$ give rise to Lagrangians in $X(B)$ which lift to $T^*B$.
These Lagrangians are graphs of exact $1$-forms, hence Hamiltonian isotopic to the zero-section. 
Note there is a natural Hamiltonian isotopy that decends to $X(B)$.
It follows that the quotient group
$$
H^0(C^\infty/\Aff) / H^0(C^\infty)
$$
parametrises Lagrangians sections of $\pi$ up to Hamiltonian isotopy.

Using the short exact sequence (\ref{eq:sesaffcinfty}) and the fact that $H^1(C^\infty)=0$ (since the sheaf $C^\infty$ is soft), we have that
\begin{equation}\label{eq:H1Affasquotient}
    H^0(C^\infty/\Aff) / H^0(C^\infty) \cong H^1(\Aff).
\end{equation}
The latter group---a purely tropical object---parametrises Lagrangian sections of $\pi$ up to Hamiltonian isotopy. \subsection{Lagrangian cobordisms}\label{sec:lagcobgroup}
We recall from \Cref{defn:lagrangian_cobordism} that a \emph{Lagrangian cobordism} between tuples of Lagrangians $(L_0^\pm,\dots,L_{k^\pm}^\pm)$ is a properly embedded Lagrangian submanifold $V \subset X \x \C$ such that
$$
V = \bigsqcup_{j=0}^{k^-} \left(L^-_j \x (\R_- + i \cdot j)\right) \sqcup \bigsqcup_{j=0}^{k^+} \left(L^+_j \x (\R_+ + i \cdot j)\right)
$$
outside $\pi\inv_\C(K)$ for some compact set $K \subset \C$ (see \Cref{fig:lagcob} for the case $k^\pm =0$).

One can use Lagrangian cobordisms to generate an equivalence relation between Lagrangian submanifolds.
This equivalence relation can be packed into a linear-algebraic invariant called the \emph{Lagrangian cobordism group}.
We present here a slightly more detailed definition than in \Cref{def:cobsimple}, which takes into account possible restriction of generators and relations to classes $\mcal L$, $\mcal L_{cob}$:
\begin{defn}\label{defn:lagcobgroup}
    Let $\mcal L$ be a collection of Lagrangians in $X$ and let $\mcal L_{cob}$ be a collection of Lagrangian cobordisms in $X \x \C$ all of whose ends are in $\mcal L$.
    The \emph{Lagrangian cobordism group} of $X$ is the group
    $$
        \Cob(X) \equiv \Cob(X;\mcal L,\mcal L_{cob})  := \Z \vspan{\mcal L}/\sim
    $$
    where the equivalence relation is generated by expressions of the form 
    $$
    \sum L_i^- \sim \sum L_i^+ 
    $$
    if there exists a Lagrangian cobordism in $\mcal L_{cob}$ between the tuples $(L_i^-)$ and $(L_i^+)$.
\end{defn}

It is often the case that one is interested in Lagrangians equipped with some extra data.
In this case, the cobordisms are also required to carry such extra data, and the restriction of data should be compatible.
For instance, in order for Lagrangians to have well-defined Floer cohomology they must satisfy some technical conditions.
These can be quite strong and topological (such as exactness, monotonicity or $\pi_2(X,L)=0$) or weaker and more algebraic (such as being unobstructed).

We will work with a setting rigid enough so that Lagrangian cobordism does not reduce to algebraic topology but flexible enough so that geometric composition of Lagrangian correspondences (see \Cref{sec:lagrangiancorrespondences}) preserve the data.\footnote{Whether a Lagrangian immersion exists or not is governed by a h-principle, and thus reduces to a question in algebraic topology. One can furthermore perturb Lagrangians by a Hamiltonian isotopy to put them in general position and then surger the self-intersections, all while preserving the cobordism class. Therefore, embedded Lagrangian cobordisms with no extra decorations reduces to algebraic topology.}
Namely:
\begin{assumption}\label{assumption:graded_spin}
    All our Lagrangians and cobordisms are \emph{oriented} and \emph{Spin}.\footnote{To obtain orientations on moduli-spaces---and thus a Fukaya category defined over field of characteristic different from $2$---it is enough to assume the Lagrangians are Pin (not necessarily Spin). However, for Calabi-Yau manifolds as in this paper, graded Lagrangians (with respect to the quadratic volume forms we consider) are automatically oriented. Hence, Pin and Spin are equivalent.}
\end{assumption}
\begin{assumption}\label{assumption:generators}
   Lagrangians $L \in \mcal L$ are \emph{embedded} and \emph{weakly-exact} (i.e. $\omega(\pi_2(X,L))=0$).
\end{assumption}
\begin{assumption}\label{assumption:cobordisms}
    Cobordisms $V \in \mcal L_{cob}$ are \emph{immersed} (with clean self-intersections).
\end{assumption}

\begin{remark}
    In this paper we are mainly interested in Lagrangian torus fibers and Lagrangian sections of a smooth Lagrangian torus fibration $X(B) \to B$ over a tropical affine torus $B$.
    In this case, fibers and sections are not just weakly-exact but in fact \emph{topologically unobstructed}: the relative homotopy groups $\pi_2(X, L)$ vanish.
\end{remark}

Putting the above together, we define the following Lagrangian cobordism group:
\begin{defn}\label{defn:lagcobgroup_thispaper}
    Let $B$ be a tropical affine torus and $X(B) \to B$ the corresponding symplectic torus.
    We consider the following two families of Lagrangians:
    \begin{itemize}
        \item $\mcal L$ is the collection of \emph{embedded, weakly-exact, oriented Lagrangian branes} in $X(B)$;
        \item $\mcal L_{cob}$ is the collection of \emph{immersed oriented Lagrangian cobordism branes} in $X(B) \x \R^2$ all of whose ends are in $\mcal L$.
    \end{itemize}
    We define the \emph{Lagrangian cobordism group} of $X(B)$ to be the group
    $$
        \Cob(X(B)) \equiv \Cob(X(B);\mcal L,\mcal L_{cob}) 
    $$
    following the notation of \Cref{defn:lagcobgroup}.
\end{defn}

\begin{remark}
    In Floer theory one often requires Lagrangians to be equipped with a \emph{grading}.
    This ensures the Floer cohomology groups are $\Z$-graded vector spaces, and in particular that the Fukaya category is $\Z$-graded.
    To define gradings one first needs to choose an almost complex structure and a non-zero quadratic volume form 
    $$
    \eta^2 \in H^0((\wedge^n T^*X)^{\otimes 2})
    $$
    which exists as long as $2c_1(X) = 0$.
    This is the case for $X = X(B)$ a torus---in fact, here we have the stronger condition $c_1(X) = 0$.
    By choosing $\eta^2$ to be actually of the form $\eta \otimes \eta$ for $\eta\in H^0(\wedge^n T^*X)$, one ensures that \emph{graded Lagrangians are oriented (and vice versa)}.
    Therefore one could replace the term `oriented' by `graded' in \Cref{defn:lagcobgroup_thispaper}.
    For more details on gradings see \cite{seidel2008fukaya}.
\end{remark}

There are two main constructions of Lagrangian cobordisms: Hamiltonian suspensions and Lagrangian surgeries.
The former is due to Audin-Lalonde-Polterovich \cite{audin1994symplectic} and produces a Lagrangian cobordism between $L_0$ and $L_1$ whenever there is a Hamiltonian isotopy between them.
The second construction is due to Biran-Cornea \cite{biran2013lagrangian} and produces a Lagrangian cobordism between $L_0, L_1$ and $L_2$ whenever $L_0 = L_1\# L_2$ is the result of a Lagrangian surgery---as defined by Lalonde-Sikorav \cite{lalonde1991sous} and Polterovich \cite{polterovich1991surgery}---between transverse Lagrangians $L_1\pitchfork L_2$.

More general surgeries and cobordisms were constructed by Hicks in \cite{hicks2019tropical, hicks2020tropical}.
Hicks constructs a Lagrangian surgery and a corresponding cobordism for any two Lagrangians intersecting along a set which can be locally modelled as the intersection of the zero-section of the cotangent bundle and the graph of the differential of a convex function. 

Let us make his result more precise for further reference.
Let $B$ be a tropical affine manifold.
We denote by $C^{trop} \subset C^0$ the subsheaf of \emph{tropical polynomials}: these are convex\footnote{We remark that our convention for tropical geometry differs from that in \cite{hicks2020tropical}. Namely, for Hicks a tropical polynomial is defined as the \emph{minimum} of a collection of affine functions, whereas we define it as the \emph{maximum}. This translates into his tropical polynomials being concave and ours being convex. Note however that when doing surgeries Hicks considers the graph of the \emph{inverse} $-f$ of a tropical polynomial, thus turning it into a convex function (cf. \cite[Definition 3.15]{hicks2020tropical}).} piecewise-linear functions  with integer slope, i.e. continuous functions $\phi: B \to \R$ which can be locally written as
\begin{equation}\label{eq:tropicalpolynomial}
    \phi = \max \{f_1,\dots,f_k\},\quad f_i \in \Aff(U), \quad  U \subset B.
\end{equation}
Similarly, we will say a continuous function $\Phi: B \to \R$ is a \emph{tropical rational function} if it can be locally written as a difference of two tropical polynomials, i.e.
$$
\Phi = \phi^+ - \phi^- = \max \{f^+_1,\dots,f^+_{k^+}\} - \max \{f^-_1,\dots,f^-_{k^-}\},\quad f^\pm_i \in \Aff(U), \quad  U \subset B.
$$
Given a tropical polynomial $\phi \in C^{trop}(B)$, there exists a smoothing which is convex and which agrees with $\phi$ outside a small neighborhood of its non-smooth locus \cite[Section 3.2]{hicks2020tropical}.
We abuse notation and denote this smoothing also by $\phi$.
Thus, $d\phi$ is well-defined and defines a Lagrangian section in $T^*B$.
The same applies to tropical rational functions.

More generally, global sections $\phi\in H^0(C^{trop}/\Aff)$ define Lagrangian sections $\Gamma(d\phi) \subset T^*B /T^*_\Z B$.
Associated to $\phi$ there is also  a subset
$$
V(\phi) \subset B
$$ 
consisting of those points at which $\phi$ is not smooth.
It is a \emph{tropical hypersurface}: a tropical subvariety of codimension 1, as defined in \Cref{sec:tropicalgeometry}.
Let $\Gamma_0 \subset T^*B/T_\Z^* B$ be the zero-section and
$$
 A := (\Gamma_0 \cup \Gamma(d\phi)) \setminus (\Gamma_0 \cap \Gamma(d\phi))
$$
be the symmetric difference of $\Gamma_0$ and $\Gamma(d\phi)$.
Since the construction of the smoothing is such that it agrees with the original piecewise-linear function outside a small neighborhood of $V(\phi)$, by definition of $A$ we have that $\pi(A)$ lives in a neighborhood of $V(\phi)$.
Hicks proves:
\begin{prop}[\cite{hicks2020tropical}]\label{prop:hicks}
    Given $\phi \in H^0(C^{trop}/\Aff)$, there  exists a Lagrangian submanifold $L_\phi$ such that:
    \begin{itemize}
        \item $L_\phi \subset B_\epsilon(A)$, so in particular $\pi(L_\phi)$ is contained in a small neighborhood of $V(\phi)$;
        
        \item $L_\phi = \Gamma_0 \cup \Gamma(d\phi)$ outside a small neighborhood of $\Gamma_0 \cap \Gamma(d\phi)$;
        
        \item there is a Lagrangian cobordism between $(\Gamma_0,\Gamma(d\phi))$ and $L_\phi$.
    \end{itemize}
\end{prop}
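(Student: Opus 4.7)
The plan is to realize $L_\phi$ as the output of the tropical Lagrangian surgery of \cite{hicks2020tropical}, performed in a neighborhood of the intersection $\Gamma_0 \cap \Gamma(d\phi)$. All three conclusions will be produced together by a single local construction, so the bulk of the work is to verify that our hypothesis $\phi \in H^0(C^{trop}/\Aff)$ places us in the regime to which Hicks' local model applies, and then to globalize.

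The first step is to localize the problem. Away from the tropical hypersurface $V(\phi)$, the smoothing of $\phi$ agrees with a local affine function and $d\phi \in T^*_\Z B$, so that $\Gamma(d\phi)$ coincides with $\Gamma_0$ in $X(B)$. Consequently $\Gamma_0 \cap \Gamma(d\phi)$ lies above an arbitrarily small neighborhood of $V(\phi)$, and outside such a neighborhood the Lagrangian $L_\phi$ is forced to equal $\Gamma_0 \cup \Gamma(d\phi)$. I would cover $V(\phi)$ by Darboux/Weinstein charts of $T^*B$ in which $\phi$ admits a local presentation $\phi = \phi^+ - \phi^-$ with $\phi^\pm \in C^{trop}$, and in which the smoothings can be arranged so that $d\phi$ vanishes at the chart boundary. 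This reduces the construction to a model problem in $T^*\R^n$.

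The second step is the local model. After strict-convex smoothing of $\phi^+$ and $\phi^-$ on an $\epsilon$-neighborhood of their non-smooth locus, the pair $(\Gamma_0,\Gamma(d\phi^\pm))$ becomes the intersection of the zero section with the graph of the differential of a (strictly convex) smooth function, which is exactly the local input for Hicks' surgery: it produces a Lagrangian $L_{\phi^\pm}$ supported in an $\epsilon$-neighborhood of the symmetric difference $A^\pm$. Applying this surgery to $\phi^+$ and then $\phi^-$ (or using Hicks' more general construction for rational $\phi$) yields a local surgered Lagrangian $L_\phi^{\mathrm{loc}}$ contained in $B_\epsilon(A)$. Because the surgery is the identity outside a small neighborhood of $\Gamma_0 \cap \Gamma(d\phi)$, the local constructions patch unambiguously to a global Lagrangian $L_\phi$ satisfying the first two bullets.

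The final step is to produce the cobordism. Each local Lagrangian surgery comes with a \emph{trace cobordism} in the sense of Biran--Cornea \cite{biran2013lagrangian}, extended to Hicks' setting, whose negative end is the pair $(\Gamma_0,\Gamma(d\phi))$ and whose positive end is the surgered Lagrangian. Patching the local traces over the cover and extending trivially by products outside a neighborhood of $V(\phi)$ gives the desired cobordism $V \subset X(B) \x \C$. The step I expect to be most delicate is the patching when $\phi$ is genuinely rational: the contributions of $\phi^+$ and $\phi^-$ give intersections of opposite orientation, and at non-smooth strata of $V(\phi)$ where several maxima collide the local convex model must be chosen compatibly with the grading and orientation data imposed by \Cref{assumption:graded_spin}, so that $L_\phi$ lies in $\mcal L$ and $V$ lies in $\mcal L_{cob}$. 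Once this compatibility is arranged (as in \cite[Section 3]{hicks2020tropical}), the three conclusions follow simultaneously from the construction.
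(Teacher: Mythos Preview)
The paper does not prove this proposition: it is quoted verbatim from \cite{hicks2020tropical} and used as a black box, so there is no proof in the paper to compare your sketch against. Your outline is a plausible reconstruction of how Hicks' argument goes (local convex model near the bend locus, surgery, trace cobordism), and at that level of detail it is fine.

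That said, you have introduced a complication that is not present in the statement. The hypothesis is $\phi \in H^0(C^{trop}/\Aff)$, i.e.\ $\phi$ is locally a tropical \emph{polynomial}---a single convex piecewise-linear function---not a tropical rational function. There is no need for a local presentation $\phi = \phi^+ - \phi^-$, and your worry in the last paragraph about ``when $\phi$ is genuinely rational'' and about ``contributions of $\phi^+$ and $\phi^-$ of opposite orientation'' simply does not arise here. The rational case is handled elsewhere in the paper (see the remark following the proposition and \Cref{cor:difference_of_flat_sections_codimension}) by applying \Cref{prop:hicks} separately to $\phi^+$ and to $-\phi^-$ and then combining via fiberwise addition, rather than by a single surgery on $\phi$.
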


\begin{figure}
    \centering
    \begin{tikzpicture}[thick]

        \begin{scope}[]
\begin{scope}[shift={(3,-1.5)}]
        \draw  (-3.5,9) rectangle (0.5,5);
        \node[scale = 1.5] (v2) at (-1.5,9) {$>>$};
        \node[scale = 1.5] at (-1.5,5) {$>>$};
        \node[rotate = 90,scale = 1.5] (v1) at (-3.5,7) {$>$};
        \node[rotate = 90,scale = 1.5] at (0.5,7) {$>$};
        \end{scope}
        
\draw[red] (3.5,5.5) node (v4) {} .. controls (1.5,5.5) and (1.5,5.5) .. (1.5,3.5);
        \draw[red] (-0.5,5.5) node (v3) {} .. controls (1.5,5.5) and (1.5,5.5) .. (1.5,7.5);
        
\draw[blue] (v3) -- (v4);
        \end{scope}
        
        \begin{scope}[shift={(6.5,0)}]
\begin{scope}[shift={(3,-1.5)}]
        \draw  (-3.5,9) rectangle (0.5,5);
        \node[scale = 1.5] (v2) at (-1.5,9) {$>>$};
        \node[scale = 1.5] at (-1.5,5) {$>>$};
        \node[rotate = 90,scale = 1.5] (v1) at (-3.5,7) {$>$};
        \node[rotate = 90,scale = 1.5] at (0.5,7) {$>$};
        \end{scope}

        \end{scope} 
        
\draw[orange] (8,7.5) .. controls (8,5.5) and (7.5,6) .. (7.5,5.5) node (v5) {};
        \begin{scope}[shift={(17,11)}]
        \draw[rotate=180, orange] (9,7.5) .. controls (9,5.5) and (8.5,6) .. (8.5,5.5) node (v6) {};
        \end{scope}
        \draw[orange] (7.5,5.5) edge  (8.5,5.5) ;

        \node[red] at (0.5,6.5) {$\Gamma(d\phi)$};
        \node[blue] at (2.5,6) {$\Gamma_0$};
        
        \node[orange] at (7,6.5) {$L_\phi$};
        \end{tikzpicture}
    \caption{Hick's surgery inside $T^2 = X(S^1)$. \emph{Left}: the tropical section $\Gamma(d\phi)$ (in red) intersects the zero-section $\Gamma_0$ (in blue) everywhere except where the smoothing has been applied. \emph{Right}: the surgery $L_\phi = \Gamma_0 \# \Gamma(d\phi)$ (in orange) agrees with $\Gamma_0$ and $\Gamma(d\phi)$ outside a neighborhood of their intersection.}
    \label{fig:jeff_surgery}
\end{figure}
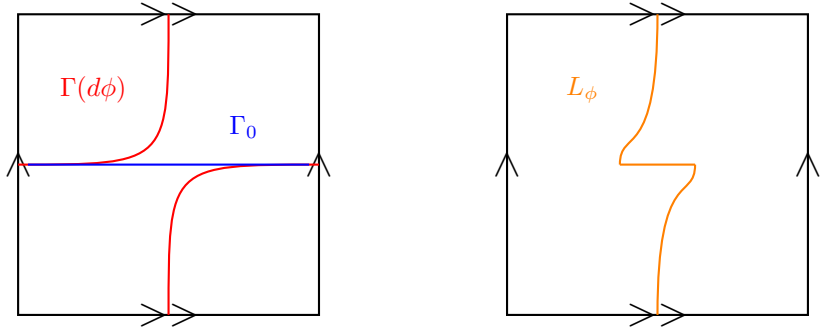

See \Cref{fig:jeff_surgery} for a $2$-dimensional example.

\begin{definition}
    We call the Lagrangian $L_\phi$ the \emph{lift} of the tropical hypersurface $V(\phi)$.
\end{definition}

\begin{remark}\label{rmk:orientation}
    When the base $B$ is orientable (as is the case in this paper, where $B = T^n$ is a tropical affine torus), an orientation of $B$ gives a canonical orientation for Lagrangian sections of $X(B) \to B$.
    If we equip Lagrangian sections with their canonical orientation, then the Lagrangian cobordism of \Cref{prop:hicks} can be upgraded to an \emph{oriented} Lagrangian cobordism such that its two section ends are identified with  $\Gamma(d\phi)$ and $(-1)^n\Gamma_0$ (where, for an oriented Lagrangian $L$, $-{L}$ denotes the same Lagrangian with the opposite orientation).
    The induced orientation on $L_\phi$ is then referred to as its canonical orientation.
\end{remark}
\begin{remark}
    A similar statement to \Cref{prop:hicks} holds for \emph{inverses} $-\phi$ of tropical polynomials. 
    These are piecewise-linear functions with integer slope which are concave instead of convex; equivalently, they can be locally written as the minimum---instead of the maximum, cf. \Cref{eq:tropicalpolynomial}---of a collection of affine functions with integer slope.
    If $-\phi$ is such function then one can define a surgery
    $$
    L_{-\phi} := \Gamma(-d\phi)\#\Gamma_0
    $$
    just as in \cite{hicks2020tropical}.
    The Lagrangian $L_{-\phi}$ satisfies the first two statements of \Cref{prop:hicks}, and a minor adaptation of Hicks' argument shows there is a Lagrangian cobordism between $(\Gamma(-d\phi),\Gamma_0)$ and $L_{\phi}$ (note that now the ordering is $(\Gamma(-d\phi),\Gamma_0)$ instead of $(\Gamma_0,\Gamma(-d\phi))$, but this does not affect the relations in the cobordism group).
    If one includes orientations as in \Cref{rmk:orientation}, then the cobordism can be oriented so that its two section ends are identified with $\Gamma(-d\phi)$ and $(-1)^n {\Gamma}_0$.
\end{remark}
 \subsection{Lagrangian correspondences}\label{sec:lagrangiancorrespondences}
In this section we give some background on Lagrangian correspondences and their relation to Lagrangian cobordism groups.

\begin{definition}
    A \emph{Lagrangian correspondence} between symplectic manifolds $(X_0,\omega_0)$ and $(X_1,\omega_1)$ is a Lagrangian submanifold 
    $$
    L_{01} \subset (X_0 \x X_1, -\omega_0 \oplus \omega_1).
    $$
    (Note the change of sign in $\omega_0$.)
    We will write $L_{01}: X_0 \Rightarrow X_1$ to denote such correspondence.
\end{definition}

Given Lagrangian correspondences $L_{01}: X_0 \Rightarrow X_1$ and $L_{12}: X_1 \Rightarrow X_2$, ideally one would like to be able to `compose' them to obtain a third correspondence $L_{12} \circ L_{01}: X_0 \to X_2$. 
A natural candidate for such composition would be 
$$
    L_{12} \circ L_{01} := \pi_{02}(L_{01} \x_{X_1} L_{12})
$$
but this does not work for two reasons.
First, the fiber product $L_{01} \x_{X_1} L_{12}$ need not be transverse, so that $L_{01} \x_{X_1} L_{12}$ is not even a manifold; second, even if it were transverse, the projection $\pi_{02}$ need not embed $L_{01}\x_{X_1} L_{12}$ into $X_0 \x X_2$ (although it does immerse it, see \cite[Lemma 2.0.5]{wehrheim2010quilted}).

The second issue is more fundamental and cannot be fixed a priori.
The first issue can be circumvented by perturbing either $L_{01}$ or $L_{12}$ so that the fiber product is indeed transverse (as transversality is a generic condition).
That is, if we are allowed to (Hamiltonianly) perturb either of the Lagrangians and we are willing to deal with immersed Lagrangians, then we have a well-defined geometric composition of Lagrangian correspondences.

Now recall that Hamiltonian isotopic Lagrangians are Lagrangian cobordant. 
It follows that, if $L_{i,i+1}: X_i \Rightarrow X_{i+1}, i=0,1$ are two Lagrangian correspondences and $\varphi: X_0 \x X_1 \to X_0 \x X_1$ is a Hamiltonian isotopy such that $L_{01} \x_{X_1} L_{12}$ is transverse, then the Lagrangian cobordism class of  $L_{12} \circ \varphi(L_{01})$ is independent of the choice of $\varphi$.
Moreover, Hanlon-Hicks show in \cite[Proposition C.2]{hanlon2022aspects} that changing the cobordism class of $L_{01}$ (not necessarily by a Hamiltonian isotopy) does not change the cobordism class of $L_{12} \circ L_{01}$ either.
That is, if $L_{01}$ is cobordant to $L_{01}^1,\dots, L_{01}^k$ (as Lagrangians in $X_0 \x X_1$), then $L_{12} \circ L_{01}$ is cobordant to $L_{12} \circ L_{01}^1, \dots, L_{12} \circ L_{01}^k$ (without loss of generality all intersections are transverse---otherwise perturb as before).

The upshot is the following:
\begin{prop}\label{prop:hickshanlon}
    Let $L_{01}: X_0 \Rightarrow X_1$ be Lagrangian correspondence. 
    There is a well-defined group homomorphism
    \begin{align*}
        \Psi_{L_{01}}:\Cob(X_0) &\to \Cob(X_1)\\
        L &\mapsto L_{01} \circ (\varphi_L(L))
    \end{align*}
    where $\varphi_L: X_0 \to X_0$ is \emph{any} Hamiltonian isotopy such that $L \x_{X_0} L_{01}$ is transverse.
\end{prop}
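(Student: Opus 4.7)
The plan is to verify that $\Psi_{L_{01}}$ is well-defined on cobordism classes (independently of both the perturbation $\varphi_L$ and of the representative of $[L]\in\Cob(X_0)$) and then to observe that additivity is forced by linearity.  Both well-definedness statements will reduce to the principle---made precise by Hanlon-Hicks in \cite[Proposition C.2]{hanlon2022aspects} and sketched in the preceding paragraphs---that geometric composition with a \emph{fixed} Lagrangian correspondence descends to Lagrangian cobordism classes.

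\textbf{Step 1: independence of $\varphi_L$.}
Suppose $\varphi_L,\varphi_L'$ are two Hamiltonian isotopies of $X_0$ both making the respective fiber product with $L_{01}$ transverse.  Then $\varphi_L'\circ\varphi_L\inv$ is a Hamiltonian isotopy from $\varphi_L(L)$ to $\varphi_L'(L)$, and its Audin-Lalonde-Polterovich suspension is a Lagrangian cobordism in $X_0$ between these two Lagrangians.  Regarding $L_{01}$ as fixed and applying the Hanlon-Hicks invariance result, we obtain a Lagrangian cobordism in $X_1$ between $L_{01}\circ\varphi_L(L)$ and $L_{01}\circ\varphi_L'(L)$, so the two representatives define the same class in $\Cob(X_1)$.

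\textbf{Step 2: descent to $\Cob(X_0)$.}
Given a cobordism $V\subset X_0\x\C$ realizing a relation $\sum L_j^- \sim \sum L_j^+$ in $\Cob(X_0)$, the candidate cobordism in $X_1\x\C$ between the two images is
$$
V' := (L_{01}\x\Delta_\C)\circ V,
$$
where $\Delta_\C\subset \C\x\C$ is the diagonal, viewed as a Lagrangian correspondence $X_0\x\C\Rightarrow X_1\x\C$.  One first chooses a Hamiltonian perturbation of $V$ supported in the compact region outside which $V$ is of product type, to achieve interior transversality of the fiber product, and combines it with the chosen perturbations $\varphi_{L_j^\pm}$ on the corresponding cylindrical ends.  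Since $L_{01}\x\Delta_\C$ acts as the identity on the $\C$-factor, the ends of $V'$ acquire precisely the product form
$$
\bigsqcup_j \bigl(L_{01}\circ\varphi_{L_j^-}(L_j^-)\bigr)\x(\R_- + ij) \ \sqcup\ \bigsqcup_j \bigl(L_{01}\circ\varphi_{L_j^+}(L_j^+)\bigr)\x(\R_+ + ij),
$$
exhibiting $V'$ as an immersed Lagrangian cobordism (in the sense of \Cref{assumption:cobordisms}) between the required tuples.

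\textbf{Step 3: homomorphism property.}
Steps 1 and 2 together show that the assignment $L\mapsto L_{01}\circ\varphi_L(L)$ descends to a well-defined function $\Cob(X_0)\to\Cob(X_1)$ after $\Z$-linear extension to the free group on $\mcal L$; additivity is then automatic from the construction, as no disjoint union of Lagrangians is ever identified in the construction of $V'$.

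\textbf{Main obstacle.}
The technical heart of the argument is Step 2: one must choose a perturbation of $V$ which simultaneously achieves transversality of the fiber product $V\x_{X_0\x\C}(L_{01}\x\Delta_\C)$ in the interior \emph{and} restricts on each cylindrical end to the prescribed Hamiltonian isotopy $\varphi_{L_j^\pm}$ of $X_0$.  Gluing a generic interior perturbation to prescribed ones at the ends requires the usual care, but this is essentially what \cite[Proposition C.2]{hanlon2022aspects} provides, so no fundamentally new ingredient is needed beyond invoking their framework in this setting.
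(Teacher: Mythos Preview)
Your proposal is correct and follows essentially the same route as the paper: the paper does not give a separate proof but presents the proposition as the ``upshot'' of the preceding paragraph, which (i) observes that independence of the Hamiltonian perturbation follows because Hamiltonian isotopic Lagrangians are cobordant, and (ii) cites \cite[Proposition C.2]{hanlon2022aspects} for invariance under replacing $L$ by a cobordant tuple. Your Steps 1--3 spell out exactly this, with Step 2 making explicit the product correspondence $L_{01}\times\Delta_\C$ that underlies the Hanlon--Hicks argument; so your write-up is a more detailed version of the same proof.
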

It is important to note that the above is only true in a cobordism group of \emph{immersed Lagrangians and immersed cobordisms}: we can always ensure transversality of fiber products, but the projection need not be an embedding.
However, work of Fukaya shows that, even if immersed, geometric composition of two unobstructed Lagrangian correspondences is again unobstructed \cite{fukaya2017unobstructed}.
That is:
\begin{prop}
    Any unobstructed Lagrangian correspondence $L_{01}: X_0 \Rightarrow X_1$ induces by geometric composition a  group homomorphism $\Cob(X_0) \to \Cob(X_1)$ between cobordism groups of immersed unobstructed Lagrangians and cobordisms.
\end{prop}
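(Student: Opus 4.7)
The plan is to check that the homomorphism $\Psi_{L_{01}}$ constructed in Proposition~\ref{prop:hickshanlon} restricts to the subgroups generated by unobstructed Lagrangians and unobstructed cobordisms. The essential new input is Fukaya's theorem from \cite{fukaya2017unobstructed}: the geometric composition of two unobstructed Lagrangian correspondences is again unobstructed. Given this, the proof is mainly a matter of packaging.

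On generators, given an unobstructed immersed Lagrangian $L \subset X_0$, I would view $L$ as an unobstructed correspondence $\mathrm{pt} \Rightarrow X_0$ and apply Fukaya's result to the composition $L_{01} \circ \varphi_L(L)$, where $\varphi_L$ is any Hamiltonian isotopy of $X_0$ making the fiber product transverse. This yields an unobstructed immersed Lagrangian in $X_1$, independent of $\varphi_L$ up to unobstructed cobordism (since unobstructedness is Hamiltonian-invariant, any two choices are related by a suspension cobordism which remains unobstructed). For the relations, an unobstructed cobordism $V \subset X_0 \times \mathbb{C}$ is itself an unobstructed correspondence $\mathrm{pt} \Rightarrow X_0 \times \mathbb{C}$. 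I would compose with the product correspondence $L_{01} \times \Delta_\mathbb{C} : X_0 \times \mathbb{C} \Rightarrow X_1 \times \mathbb{C}$, which is unobstructed because both factors are (the diagonal $\Delta_\mathbb{C}$ is exact, hence trivially unobstructed). Fukaya's theorem then produces an unobstructed immersed Lagrangian in $X_1 \times \mathbb{C}$, and the product structure of $L_{01} \times \Delta_\mathbb{C}$ guarantees that cylindrical ends of $V$ are carried to cylindrical ends of the correct shape, with horizontal slices replaced by $\Psi_{L_{01}}$ of the corresponding ends of $V$. Thus the composition is an unobstructed cobordism in $X_1$ between $\Psi_{L_{01}}$-images of the tuples of ends. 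The well-definedness and homomorphism property then follow directly from Proposition~\ref{prop:hickshanlon} applied within the ambient immersed cobordism groups.

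The delicate point I anticipate is that both $V$ and $L_{01} \times \Delta_\mathbb{C}$ are non-compact along the $\mathbb{C}$-factor, so Fukaya's theorem must be invoked in a non-compact setting. I expect this to be routine: outside a compact set the correspondences are products and the cobordism has cylindrical ends, so any bubbling or compactness analysis either localises to a compact central region (where Fukaya's argument applies verbatim) or reduces along the cylindrical ends to the compact correspondence $L_{01}$, for which the hypothesis is assumed. The preservation of clean self-intersections under composition, which is needed for the immersed cobordism framework, follows from a standard generic transversality argument applied to the fiber product.
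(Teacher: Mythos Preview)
Your approach is essentially the same as the paper's: the paper does not give a separate proof of this proposition at all, but states it as an immediate consequence of combining Proposition~\ref{prop:hickshanlon} (the Hanlon--Hicks homomorphism on immersed cobordism groups) with Fukaya's theorem \cite{fukaya2017unobstructed} that geometric composition preserves unobstructedness. Your write-up simply unpacks this in more detail, including the product correspondence $L_{01}\times\Delta_\C$ on cobordisms and the non-compactness caveat; the latter is a genuine subtlety, but the paper does not address it either.
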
 
\section{Filtering the Lagrangian cobordism group}
Recall from \Cref{def:pontryagin_intro} that we denote by
\begin{align*}
    \star: \Cobfib(X) \otimes \Cobfib(X) &\to \Cobfib(X)\\
    F_p \star F_q & = F_{m(p,q)}
\end{align*}
the Pontryagin product operation, where $m:B \x B \to B$ denotes the group operation on the base and $X = X(B)$.
Given any subgroup $H \subset \Cobfib(X)$, we denote by $H^{\star k} \subset \Cobfib(X)$ the subgroup of elements of the form 
$$
h_1\star\dots\star h_k,\quad h_i \in H.
$$
We use the convention that $H^{\star 0} = \Cobfib(X)$.

\begin{definition}\label{defn:filtration}
    Let $\Cobfib(X)_{\hom} \subset \Cobfib(X)$ be the subgroup generated by elements of the form $F_p - F_q$.
    We define the \emph{parallelotope filtration} on $\Cobfib(X)$
    as the decreasing filtration 
    $$
        \dots\subset F^1\Cobfib(X) \subset F^0\Cobfib(X) = \Cobfib(X)
    $$
    where $F^i\Cobfib(X) := \Cobfib(X)_{\hom}^{\star i}$.
\end{definition}

In the following lemmas we characterize the first graded pieces of this filtration (c.f. the discussion after \Cref{mainthm:filtration_terminates}):
\begin{lem}\label{lem:F0F1}
    There is an isomorphism $F^0 \Cobfib(X)/F^1 \Cobfib(X) \cong H_0(B)$  
\end{lem}
\begin{proof}
    By definition, we have that $F^0 \Cobfib(X) = \Cobfib(X)$. 
    Now consider the cycle-class map
    \begin{align*}
        \Cobfib(X) & \to H_0(B)\\
        \sum n_i F_{b_i} &\mapsto \sum n_i.
    \end{align*}
    It is clearly surjective, and its kernel---those tuples $\sum n_i F_{b_i}$ such that $\sum n_i=0$ is generated by linear combinations of the form $F_{b} - F_{b'}$. 
    The latter is precisely $F^1 \Cobfib(X)$, thus $H_0(B)\cong \Cobfib(X)/F^1 \Cobfib(X) = F^0 \Cobfib(X) /F^1 \Cobfib(X)$.
\end{proof}
    
\begin{lem}\label{lem:F1F2}
    There is an isomorphism $F^1 \Cobfib(X)/F^2 \Cobfib(X) \cong \Alb(B)$.
\end{lem}
\begin{proof}
    As argued above, the group $F^1\Cobfib(X)$ is nothing but the kernel $\Cobfib(X)_{\hom}$ of the cycle-class map $\Cobfib(X)\to H_0(B)$.
    Sheridan-Smith showed in \cite[Lemma 2.10]{sheridan2021lagrangian} that there exists a (surjective) map $\Cobfib(X)_{\hom} \to \Alb(B)$; essentially, this map takes a tuple $F_b - F_{b'}$ to the difference $b-b'$ in the base $B$ (which is a torus, and thus has an additive structure).
    It follows that the kernel of this map is generated by sums $(F_b - F_{b'}) - (F_c - F_{c'})$ where $b - b' = c - c'$ in the base.
    Similarly, note that $F^2\Cobfib(X)$ is generated by classes of the form
    $$
        (F_s - F_{s'})\star(F_t - F_{t'}) = (F_{m(s,t)} - F_{m(s',t)}) - (F_{m(s,t')} - F_{m(s',t')})
    $$
    and indeed one has $m(s, t)-m(s',t)=s-s'=m(s,t')-m(s',t')$.
    Thus $F^2\Cobfib(X)$ is naturally isomorphic to the kernel of map $\Cobfib(X)_{\hom} \to \Alb(B)$, and thus $\Alb(B)\cong\Cobfib(X)_{\hom}/F^2\Cobfib(X) = F^1\Cobfib(X)/F^2\Cobfib(X)$.
\end{proof}

 \subsection{Constant sections and tropical Lagrangians}\label{sec:constantsections}
Recall from \Cref{sec:proof_sketch} that the strategy to prove \Cref{mainthm:filtration_terminates} is to (i) think of fibers of $X(B) \to B$ as (constant) sections of the fibration $X(B) \to F$, where $F = T_0^* B / T_{\Z,0}^* B$ is a fiber; and (ii) study (fiberwise addition of) these sections using the tropical geometry of the base by turning them into tropical Lagrangians.
In this section we will show: (i) when $B$ is polarized, then so is $F$; (ii) constant sections over a polarized tropical affine torus have associated tropical divisors.

We first explain how to view $F$ as a tropical affine torus.
Let $B = (\R^n/\Lambda, \Z^n)$ be a tropical affine torus (see \Cref{ex:tropicalaffinetorus}).
Then the fiber $F = T^*_0 B / T^*_{\Z,0} B$ can be written as 
$$
F = (\R^n)^\vee / (\Z^n)^\vee
$$
where $(\Z^n)^\vee \subset (\R^n)^\vee$ is the dual lattice to $\Z^n \subset \R^n$ (i.e. those $\alpha \in (\R^n)^\vee$ such that $\alpha(v) \in \Z$ for all $v \in \Z^n$).
We equip $F$ with the tropical affine structure given by the lattice
$$
T_\Z F := \Lambda^\vee = \{\alpha \in (\R^n)^\vee \mid \alpha(\Lambda) \subset \Z\}.
$$
(This is equivalent to declaring that, if $(q_i)$ are local affine coordinates on $B$, then the dual coordinates $(p_i)$ are affine coordinates on the fiber.)
A more detailed explanation of this construction can be found in \Cref{sec:symplecticfourier}.

Let us now recall the definition of a polarization, which goes back to Mikhalkin-Zharkov:
\begin{definition}[\cite{mikhalkin2008tropical}]\label{def:polarized_torus}
    A \emph{polarization} on a tropical affine torus is the data of a map 
    $$
    c: H_1(B;\Z) \to H^0(T^*_\Z B)
    $$
    such that $\vspan{\gamma_1,\gamma_2}:= \int_{\gamma_1} c(\gamma_2)$ defines a symmetric, positive-definite pairing on $H_1(B;\Z)$.
\end{definition}

\begin{example}\label{ex:polarization_on_torus}
    Consider the family of tropical affine $2$-tori of \Cref{fig:polarised_torus} (see also \Cref{ex:polarization_2_torus}).
    Let 
    \begin{align*}
        \gamma_1 &= (\alpha_1+\alpha_2, \alpha_2)\\
        \gamma_2 &= (\alpha_2, \alpha_2 + \alpha_3)
    \end{align*}
    be generators of $H_1(B;\Z)$.
    Then the homomorphism
    \begin{align*}
        c: H_1(B;\Z) &\to H^0(T^*_\Z B)\\
        c(\gamma_i) &= dx_i,
    \end{align*}
    where $(x_1,x_2)$ are the standard coordinates on $B$, is a polarization.
    Indeed, the pairing is clearly positive definite, and furthermore
    $$
        \int_{\gamma_1} c(\gamma_2) = \int_{\gamma_1} dx_2 = \alpha_2 = \int_{\gamma_2} dx_1 = \int_{\gamma_1} c(\gamma_2).
    $$
\end{example}

\begin{lem}
    A tropical affine torus $B$ is polarized if and only if the fiber $F$ is polarized.
\end{lem}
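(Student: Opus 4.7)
The plan is to construct a polarization on one torus explicitly from a polarization on the other, exploiting the duality between the two lattices involved.
Write $B = V/\Lambda$ with $V = \R^n$ and tangent integer lattice $M = \Z^n$, so that $F = V^*/M^*$ with tangent integer lattice $\Lambda^\vee = \{\alpha \in V^* : \alpha(\Lambda) \subset \Z\}$. Under the natural identifications
\[
H_1(B;\Z) \cong \Lambda,\quad H^0(T^*_\Z B) \cong M^*,\quad H_1(F;\Z) \cong M^*,\quad H^0(T^*_\Z F) \cong (\Lambda^\vee)^\vee = \Lambda,
\]
a polarization on $B$ is a $\Z$-linear map $c : \Lambda \to M^*$ while a polarization on $F$ is a $\Z$-linear map $c' : M^* \to \Lambda$: the source and target simply get exchanged when passing from $B$ to $F$.

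Given a polarization $c : \Lambda \to M^*$ on $B$, I would extend it $\R$-linearly to $c_\R : V \to V^*$ (using that $\Lambda$ spans $V$). Positive-definiteness forces $c_\R$ to be a linear isomorphism, and symmetry says that the associated bilinear form on $V$ is symmetric. Let $k := [M^* : c(\Lambda)] \in \Z_{>0}$ be the (finite) index of the image; then $kM^* \subset c(\Lambda)$, so the formula
\[
    c'(\mu) := c^{-1}(k\mu) = k\, c_\R^{-1}(\mu)
\]
defines a $\Z$-linear map $c' : M^* \to \Lambda$. To verify that $c'$ is a polarization, set $\gamma_i := c^{-1}(k\mu_i) \in \Lambda$, so that $c(\gamma_i) = k\mu_i$; then
\[
\langle \mu_1, \mu_2 \rangle_F = c'(\mu_2)(\mu_1) = \mu_1(\gamma_2) = \tfrac{1}{k}\, c(\gamma_1)(\gamma_2),
\]
which is symmetric in $(\mu_1,\mu_2)$ and strictly positive for $\mu_1 = \mu_2 \neq 0$, inheriting both properties from $c$.

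The converse follows by symmetry: under the identifications above one has $(F)^\vee = B$ as tropical affine tori, so running the same argument with the roles of $B$ and $F$ swapped produces a polarization on $B$ from any polarization on $F$. The only subtle point, which I expect to be the main source of confusion, is the rescaling by $k$: one might naively hope to set $c' := c_\R^{-1}|_{M^*}$, but in general $c_\R^{-1}(M^*)$ is a strict overlattice of $\Lambda$ with index exactly $k$, and multiplying by $k$ is precisely what lands the map in $\Lambda$, at the harmless cost of rescaling the induced bilinear form by $1/k$.
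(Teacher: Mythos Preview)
Your argument is correct, and both you and the paper reduce to one direction via the same observation that the fiber of the dual fibration recovers $B$, so it suffices to build a polarization on $F$ from one on $B$. The constructions then diverge. The paper simply takes the $\Z$-dual of $c:\Lambda\to(\Z^n)^\vee$ to obtain $c^\vee:\Z^n\to\Lambda^\vee$ and declares this to be the polarization on $F$. Read literally this is problematic: a polarization on $F=((\R^n)^\vee/(\Z^n)^\vee,\Lambda^\vee)$ must be a map $H_1(F;\Z)\to H^0(T^*_\Z F)$, i.e.\ $(\Z^n)^\vee\to\Lambda$, whereas $c^\vee$ goes $\Z^n\to\Lambda^\vee$. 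Since $c$ and $c^\vee$ extend to the \emph{same} $\R$-linear map $V\to V^*$ (by symmetry of the underlying form), dualizing alone never moves you to the $V^*\to V$ side that $F$ requires.

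Your construction --- invert $c_\R$ and rescale by the index $k=[M^*:c(\Lambda)]$ --- is the honest way to land in the correct codomain: $kM^*\subset c(\Lambda)$ forces $c'(\mu)=c_\R^{-1}(k\mu)\in\Lambda$, and your verification of symmetry and positivity via $\langle\mu_1,\mu_2\rangle_F=\tfrac{1}{k}\,c(\gamma_1)(\gamma_2)$ is clean. The rescaling is genuinely necessary for non-principal polarizations, exactly as you flag at the end. What the paper's one-line argument buys is brevity; what yours buys is that the map actually has the right source and target.
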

\begin{proof}
    First note that the fiber of $X(B) \to F$ is $B$, so it is sufficient to show that $B$ polarized implies $F$ polarized.
    Now note that there are canonical identifications
    $$
    H_1(B;\Z) \cong \Lambda, \quad  
    H^0(T^*_\Z B) \cong (\Z^n)^\vee
    $$
    for $B = (\R^n/\Lambda,\Z^n)$.
    We view the polarization  as a map $c:\Lambda \to (\Z^n)^\vee$.
    Dualizing this map yields $c^\vee: \Z^n \to \Lambda^\vee$ which is precisely the data of a polarization on $F = ((\R^n)^\vee/(\Z^n)^\vee,\Lambda^\vee)$ (note that if $c$ is symmetric and positive-definite then so is $c^\vee$).
\end{proof}

In what follows we denote by $B$ a polarized tropical affine torus. 
We remind the reader that the results developed in this section (that is, the study of flat sections over a polarized tropical affine torus) will be ultimately applied to the polarized tropical affine torus $F$ (the fiber of $X(B) \to B$).

Let $B$ be a polarized tropical affine $n$-torus and let $c: H_1(B;\Z) \to H^0(T^*_\Z B)$ be its polarization.
Sheridan-Smith proved in \cite[Lemma 3.9]{sheridan2021lagrangian} that such data is equivalent to a Riemannian metric $g$ on $B$ such that $g(\gamma,v) \in \Z$ for all $\gamma \in H_1(B;\Z)$ and all $v \in T_\Z B$.\footnote{We implicitly identify $H_1(B;\Z)$ with its image in $H^0(TB) \cong \Hom(H^0(T^*B),\R)$ via the integration pairing.}
Namely, from a polarization one defines 
\begin{equation}\label{eq:metricfrompolarisation}
    g(\gamma_1,\gamma_2) = \int_{\gamma_1} c(\gamma_2), \quad \gamma_i \in H_1(B;\Z),
\end{equation}
and then extends linearly to $TB \cong H_1(B;\Z)\otimes\R$.

Following \cite[Section 3.7]{sheridan2021lagrangian}, we define the following collection of piecewise-linear functions on $B$.
Let $k \in \N$ and choose a function $\delta: T^*_\Z B \to \R$ such that 
\begin{equation}\label{eq:periodicitydelta}
    \delta(\alpha + kc(\gamma)) = \delta(\alpha), \quad \gamma \in H_1(B;\Z).
\end{equation}
We define a piecewise-linear function $f_{k,\delta} : H^0(TB) \to \R$ by\footnote{Here and in what follows, we view both $TB$ and $T^*B$ as local systems, i.e. the vector bundles $TB$ (resp. $T^*B$) equipped with the flat connection given by $T_\Z B$ (resp. $T^*_\Z B$). Their sections $H^0(TB)$ (resp. $H^0(T^*B)$) are thus  (locally) constant vector fields (resp. $1$-forms).}
\begin{equation}\label{eq:fkdeltadefn}
    f_{k,\delta}(v) = \max_{\alpha\in T^*_\Z B}
    \left\lbrace
        \alpha(v) - \frac{|\alpha^\#|^2}{2k} + \delta(\alpha)
    \right\rbrace
\end{equation}
where $g(\alpha^\#,-)=\alpha$ and $|\alpha^\#|^2 = g(\alpha^\#,\alpha^\#)$ is (the square of) the norm defined by $g$.
This is a piecewise-linear function with integer slope, and furthermore one can check that for any $\gamma \in H_1(B;\Z)$ 
\begin{align}\label{eq:periodicityfdelta}
    f_{k,\delta}(v + \gamma) - f_{k,\delta}(v) &= k\cdot c(\gamma)(v) + \frac{k |\gamma|^2}{2}
\end{align}
is an affine function \cite[Lemma 3.23]{sheridan2021lagrangian}.
It follows that $f_{k,\delta}$ descends to a section of $C^0/\AffZ$.

\begin{lem}\label{lem:appproximationfdelta}
    The sections
    $$
    \frac{k}{2}|-|^2, f_{k,\delta} \in H^0(C^\infty/\Aff)
    $$
    define the same class in $H^1(\Aff)$.
\end{lem}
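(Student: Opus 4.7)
The plan is to use the isomorphism $H^1(\Aff) \cong H^0(C^\infty/\Aff)/H^0(C^\infty)$ from \eqref{eq:H1Affasquotient} and exhibit the difference $f_{k,\delta} - \tfrac{k}{2}|-|^2$ as the image of a global smooth function on $B$.

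First, I would lift both functions to the universal cover $H^0(TB) \cong \R^n$ of $B$, where $\tfrac{k}{2}|v|^2$ is globally defined and smooth. A direct computation using the bilinearity and symmetry of $g$ gives
\begin{equation*}
    \tfrac{k}{2}|v+\gamma|^2 - \tfrac{k}{2}|v|^2 = k\, g(\gamma,v) + \tfrac{k|\gamma|^2}{2}
\end{equation*}
for any $\gamma \in \Lambda \cong H_1(B;\Z)$. By the definition \eqref{eq:metricfrompolarisation} of the metric in terms of the polarization (extended bilinearly), we have $g(\gamma,v) = c(\gamma)(v)$, so this quasi-periodicity becomes $kc(\gamma)(v) + \tfrac{k|\gamma|^2}{2}$, which is \emph{precisely} the same affine expression appearing in the quasi-periodicity \eqref{eq:periodicityfdelta} of $f_{k,\delta}$.

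Next, I would define $h(v) := f_{k,\delta}(v) - \tfrac{k}{2}|v|^2$ on $\R^n$. The matching of the two affine correction terms immediately gives $h(v+\gamma) = h(v)$ for all $\gamma \in \Lambda$, so $h$ descends to a (continuous) function on $B$. To promote this to a smooth function, I would invoke the smoothing of $f_{k,\delta}$ from \cite[Section 3.2]{hicks2020tropical}: the smoothing is local, supported in a neighborhood of the non-smooth locus of $f_{k,\delta}$, and can be chosen $\Lambda$-equivariantly (since the non-smooth locus itself is $\Lambda$-invariant). Hence the resulting $h \in C^\infty(\R^n)$ still descends to a smooth function on $B$, i.e.\ an element of $H^0(C^\infty)$. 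Its image in $H^0(C^\infty/\Aff)$ is the difference of the classes of $f_{k,\delta}$ and $\tfrac{k}{2}|-|^2$, which therefore coincide in the quotient $H^0(C^\infty/\Aff)/H^0(C^\infty) \cong H^1(\Aff)$.

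I do not anticipate any serious obstacle: the core of the argument is the identification $g(\gamma,v) = c(\gamma)(v)$ which makes the two quasi-periodicity formulas match term-by-term, and the only technical point is checking that the smoothing of $f_{k,\delta}$ can be performed compatibly with the $\Lambda$-action, which follows because the construction in \cite{hicks2020tropical} is local near the tropical hypersurface $V(f_{k,\delta})$.
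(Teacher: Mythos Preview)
Your proof is correct and follows essentially the same approach as the paper: both compute the quasi-periodicity of $\tfrac{k}{2}|-|^2$, match it term-by-term with \eqref{eq:periodicityfdelta} via the identity $g(\gamma,v)=c(\gamma)(v)$, and conclude that the difference descends to a global function on $B$, so the two classes agree in $H^1(\Aff)\cong H^0(C^\infty/\Aff)/H^0(C^\infty)$. Your treatment of the smoothing step is in fact slightly more explicit than the paper's, which simply asserts the difference is a well-defined smooth function.
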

\begin{proof}
    We have that
    \begin{align*}
        \frac{k}{2}|v+\gamma|^2 - \frac{k}{2}|\gamma|^2 
        &= \frac{k}{2}\left(|v|^2 + 2g(v,\gamma) + |\gamma|^2\right) - \frac{k}{2}|\gamma|^2 \\
        &= \frac{k}{2}|v|^2 + k\cdot c(\gamma)(v) 
    \end{align*}
    where in the second equality we have used the relation between the metric and the polarization, see \Cref{eq:metricfrompolarisation}.
    That is, the function $\frac{k}{2}|-|^2$ has the same quasi-periodicity as $f_{k,\delta}$ (cf. \Cref{eq:periodicityfdelta}).
    Hence, the difference is a well-defined smooth function on $B$, and the result follows from the isomorphism in \Cref{eq:H1Affasquotient}.
\end{proof}

Recall from \Cref{sec:tropicallagrangiansections} that functions on $B$ are intimately related to Lagrangian sections of the torus fibration $X(B) \to B$.
Namely, the groups
\begin{equation}\label{eq:sectionsandsectionsuptoham}
    H^0(C^\infty/\Aff),\quad H^0(C^\infty/\Aff)/H^0(C^\infty)
\end{equation}
parametrize, respectively, Lagrangian sections and Lagrangian sections up to Hamiltonian isotopy.
The symplectic upshot of \Cref{lem:appproximationfdelta} is then the following: the square of the (half-)norm function $\frac{1}{2}|-|^2$ (and all its multiples $\frac{k}{2}|-|^2, k\in \N$) define Lagrangian sections which are Hamiltonian isotopic to tropical Lagrangian sections (the graphs of $df_{k,\delta}$).
\begin{remark}
    The functions $f_{k,\delta}$ appearing in \Cref{lem:appproximationfdelta} are sections of $C^0/\Aff$ (instead of $C^\infty/\Aff$) and hence the differences $f_{k,\delta} - \frac{k}{2}\vert-\vert^2$ are  sections of $C^0$ (instead of $C^\infty$). 
    Note however that the functions $f_{k,\delta}$ are not (equivalence classes of) arbitrary continuous functions but piecewise linear ones.
    Such functions can be smoothed out---being modified only in an arbitrarily small neighborhood of their bending locus---to obtain smooth functions (see \cite[Section 3.2]{hicks2020tropical}).
    Whenever we write $df$ (or consider the class of $f$ in any of the groups in \Cref{eq:sectionsandsectionsuptoham}) for such a piecewise linear function, we mean $df^{sm}$ (or the class of $f^{sm}$) for such a smoothing $f^{sm}$.
\end{remark}
We now show that any linear function on $B$ can be written as a linear combination of (shifts of) the norm function.
Hence, by \Cref{lem:appproximationfdelta}, its associated Lagrangian sections is Hamiltonian isotopic to a linear combination of tropical Lagrangian sections.

\begin{lem}\label{lem:linearfunctionasnorm}
    Let $B$ be a polarized tropical affine torus.
    Let $f \in H^0(C^\infty / \Aff)$ be a section whose lift $\tilde f : H^0(TB) \to \R$ is a linear function (that is, the Lagrangian section $\Gamma(df)$ is constant).
    Then for any $k \in \N$ there exist vectors $w_\pm \equiv w_{\pm,k} \in H^0(TB)$ such that 
    $$
        f(v) = \frac{k}{2}\left(|v + w_+|^2 - |v + w_-|^2\right)
    $$
    in $H^0(C^\infty / \Aff)$.
\end{lem}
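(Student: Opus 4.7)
The plan is to expand the claimed right-hand side by bilinearity of $g$ and match the resulting linear coefficient against $\tilde f$, using positive-definiteness of the polarization to invert. A direct computation gives
$$
\frac{k}{2}\bigl(|v+w_+|^2 - |v+w_-|^2\bigr) = k\, g(v,\, w_+ - w_-) + \tfrac{k}{2}\bigl(|w_+|^2 - |w_-|^2\bigr),
$$
whose second summand is a constant—hence affine, and therefore trivial in $H^0(C^\infty/\Aff)$. Before proceeding, one has to verify that each summand $\frac{k}{2}|v+w_\pm|^2$ actually defines a section of $C^\infty/\Aff$ on $B$; this follows exactly as in the proof of \Cref{lem:appproximationfdelta}, since shifting $v$ by a lattice element $\gamma \in \Lambda$ changes $\frac{k}{2}|v+w_\pm|^2$ by $k\,c(\gamma)(v) + \text{const}$, which is integer-slope affine.

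Once this well-definedness is in hand, write $\tilde f(v) = \alpha(v)$ for some $\alpha \in H^0(T^*B)$ (possible since $\tilde f$ is linear). It then suffices to produce $w_\pm \in H^0(TB)$ with $k\, g(v, w_+ - w_-) = \alpha(v)$ for all $v$. The polarization, when identified with the metric-induced map $H^0(TB) \to H^0(T^*B)$, $v \mapsto g(v,-)$, is a real linear isomorphism because $g$ is positive-definite; hence there exists a unique $u \in H^0(TB)$ with $k\, g(u,-) = \alpha$, and one may take, for instance, $w_+ = u$ and $w_- = 0$. Any other splitting $u = w_+ - w_-$ works equally well, so the $w_\pm$ are by no means unique.

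The only conceptual point worth highlighting is the identification of the polarization with the metric-induced isomorphism $TB \xrightarrow{\sim} T^*B$, which is precisely the Sheridan--Smith reformulation recalled before \Cref{eq:metricfrompolarisation}. Granting that, the argument is entirely formal: there is no genuine obstacle, and the role of the integer $k$ is merely to rescale the vector $u$—flexibility which one imagines will be exploited later, when the $w_\pm$ need to be adjusted relative to the lattice in order to compare the sections $\frac{k}{2}|v+w_\pm|^2$ with the piecewise-linear functions $f_{k,\delta}$ of \Cref{eq:fkdeltadefn}.
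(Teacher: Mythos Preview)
Your proof is correct and follows essentially the same route as the paper's: expand the quadratic expression, match the linear coefficient against $\alpha$ using the metric/polarization identification $g(w,-)=c(w)$, and solve for $w_+-w_-$. The only cosmetic difference is that the paper carries the constant term $b$ in $\tilde f(v)=\alpha(v)+b$ and imposes the extra condition $\tfrac{k}{2}(|w_+|^2-|w_-|^2)=b$, whereas you (correctly) observe that constants die in $H^0(C^\infty/\Aff)$ and simply take $w_+=u$, $w_-=0$.
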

\begin{proof}
    Let $\tilde f : H^0(TB) \to \R$ be the linear function
    $$
        \tilde f(v) = \alpha(v) + b
    $$
    where $\alpha \in H^0(T^* B)$ and $b \in \R$.
    Choose $w_\pm \in H^0(TB)$ such that 
    \begin{align*}
        k\cdot c(w_+ - w_-) &= \alpha \\
        \frac{k}{2}\left(|w_+|^2 - |w_-|^2\right) &= b
    \end{align*}
    where we extend $c$ to a map $H_1(B;\R) \cong H^0(TB) \to H^0(T^*B)$ by linearity.
    Then we  have that
    \begin{align*}
        \frac{k}{2}\left(|v + w_+|^2 - |v + w_-|^2\right) & =  \frac{k}{2}\left(2g(v,w_+) + |w_+|^2 - 2g(v,w_-) - |w_-|^2\right)\\
        &= k\cdot c(w_+-w_-)(v) + \frac{k}{2}\left(|w_+|^2 - |w_-|^2\right),\\
        &=\alpha(v) + b \\
        &= \tilde f(v).
    \end{align*}
    Here we have used: the definition of the norm $|-|^2 = g(-,-)$ in the first line; the relation $g(v,w) = c(w)(v)$ between the metric $g$ and the polarization $c$ in the second line (cf. \Cref{eq:metricfrompolarisation}).
\end{proof}

In the following we use the notation
$$
f_{k,\delta, w} := f_{k,\delta}(- + w)
$$
to denote the translation of  $f_{k,\delta}$ by $w \in H^0(TB)$.

\begin{cor}\label{cor:linearfunctionastropical}
    Let $B$ be a polarized tropical affine torus and $f \in H^0(C^\infty / \Aff)$ be a linear section.
    There exist vectors $w_\pm \in H^0(TB)$ such that, for any $k \in \N$ and any $\delta_\pm:H^0(TB) \to \R$ satisfying \Cref{eq:periodicityfdelta}, the tropical rational function
    $$
        f_{k,\delta_+,w_+} - f_{k,\delta_-,w_-}
    $$
    has (after smoothing) the same class as $f$ in $H^1(\Aff)$.
\end{cor}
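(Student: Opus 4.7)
The plan is to combine the two preceding lemmas directly. First I would apply \Cref{lem:linearfunctionasnorm} to the given linear section $f$ with the chosen $k \in \N$, obtaining vectors $w_\pm \in H^0(TB)$ such that
\[
    f(v) = \frac{k}{2}\left(|v + w_+|^2 - |v + w_-|^2\right)
\]
holds as an equality in $H^0(C^\infty/\Aff)$. This reduces the corollary to showing, separately for each sign, that the translated quadratic $\frac{k}{2}|v+w_\pm|^2$ and the translated tropical polynomial $f_{k,\delta_\pm,w_\pm}$ define the same class in $H^1(\Aff)$.

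For that step, the idea is to transport \Cref{lem:appproximationfdelta} along the translation $v \mapsto v + w_\pm$. The lemma asserts that $f_{k,\delta_\pm}(u) - \frac{k}{2}|u|^2$ descends to a globally defined smooth function on $B$. Since translation by the constant vector $w_\pm$ is a diffeomorphism of $B$ preserving the sheaf $C^\infty$ and sending $\Aff$ to itself, substituting $u = v + w_\pm$ yields
\[
    f_{k,\delta_\pm,w_\pm}(v) - \frac{k}{2}|v + w_\pm|^2 \in H^0(C^\infty).
\]
Invoking the isomorphism $H^1(\Aff) \cong H^0(C^\infty/\Aff)/H^0(C^\infty)$ from \Cref{eq:H1Affasquotient} then identifies these two sections in $H^1(\Aff)$.

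Taking the difference between the $+$ and $-$ contributions and comparing with the decomposition of $f$ above produces
\[
    f = f_{k,\delta_+,w_+} - f_{k,\delta_-,w_-}
\]
in $H^1(\Aff)$, which is exactly the claim. I do not foresee any substantial obstacle; the only subtlety worth checking is that $f_{k,\delta_\pm,w_\pm}$ is still a valid tropical object in the sense of \Cref{eq:fkdeltadefn}. This is immediate because shifting the argument by $w_\pm$ merely reindexes the maximum in the definition, and the periodicity condition \Cref{eq:periodicitydelta} on $\delta_\pm$ is translation-invariant, so the quasi-periodicity \Cref{eq:periodicityfdelta} that makes $f_{k,\delta_\pm,w_\pm}$ descend to a section of $C^0/\AffZ$ continues to hold after translation.
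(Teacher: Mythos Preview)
Your proposal is correct and follows essentially the same approach as the paper: apply \Cref{lem:linearfunctionasnorm} to write $f$ as a difference of translated norm functions, then invoke \Cref{lem:appproximationfdelta} (transported along the translation by $w_\pm$) together with \Cref{eq:H1Affasquotient} to identify each summand with the corresponding $f_{k,\delta_\pm,w_\pm}$ in $H^1(\Aff)$. Your write-up is in fact slightly more explicit than the paper's, which simply reduces to the untranslated statement of \Cref{lem:appproximationfdelta} without spelling out the translation step.
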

\begin{proof}
    It follows from \Cref{lem:linearfunctionasnorm} that there exists $w_\pm \in H^0(TB)$ such that  $f$ coincides with 
    $$
        v\mapsto \frac{k}{2}\left(|v + w_+|^2 - |v + w_-|^2\right)
    $$
    in $H^0(C^\infty/\Aff)$.
    Hence, the statement of the Corollary is equivalent to showing that, for any choices of $\delta_\pm:H^0(TB) \to \R$, the sections $\frac{k}{2}|-|^2$ and$f_{k,\delta_\pm}$ define the same class in $H^1(\Aff)$.
    This is precisely the content of \Cref{lem:appproximationfdelta}.
\end{proof}

From the identification between $H^1(\Aff)$ and Lagrangian sections up to Hamiltonian isotopy we get: 
\begin{cor}\label{cor:constantsectionistropical}
    Any constant Lagrangian section is Hamiltonian isotopic to the graph of a tropical rational function.
\end{cor}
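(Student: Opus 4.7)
The plan is to observe that this corollary is essentially a packaging of \Cref{cor:linearfunctionastropical} together with the dictionary between $H^1(\Aff)$ and Hamiltonian isotopy classes of Lagrangian sections established in \Cref{sec:tropicallagrangiansections}. The only real work is the (routine) translation between the symplectic statement and the sheaf-theoretic one.

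First I would make precise what ``constant Lagrangian section'' means in terms of functions. A Lagrangian section $\Gamma \subset X(B) \to B$ lifts, in any simply connected chart, to the graph of $df$ for some smooth $f$, and globally this data assembles into a class in $H^0(C^\infty/\Aff)$ via the identification recalled in \Cref{sec:tropicallagrangiansections}. The section is constant (i.e. flat, the image of a locally constant lift of the zero-section) precisely when $df$ is a (locally) constant $1$-form, which on the universal cover means $f$ lifts to a linear function $\tilde f(v) = \alpha(v) + b$ for some $\alpha \in H^0(T^*B)$ and $b \in \R$. Thus constant sections correspond exactly to the linear classes $f \in H^0(C^\infty/\Aff)$ to which \Cref{cor:linearfunctionastropical} applies.

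Next I would invoke \Cref{cor:linearfunctionastropical} directly: for any such linear $f$, choose any $k \in \N$ and any admissible $\delta_\pm$ satisfying \Cref{eq:periodicitydelta}, and let $w_\pm \in H^0(TB)$ be the vectors it produces. Then the tropical rational function
\begin{equation*}
\Phi := f_{k,\delta_+,w_+} - f_{k,\delta_-,w_-}
\end{equation*}
has, after smoothing, the same class as $f$ in $H^1(\Aff)$. By the isomorphism \Cref{eq:H1Affasquotient}, namely $H^0(C^\infty/\Aff)/H^0(C^\infty) \cong H^1(\Aff)$, this equality of classes is exactly the statement that $\Gamma(df)$ and $\Gamma(d\Phi)$ differ by the graph of an exact $1$-form descending to $X(B)$, hence are Hamiltonian isotopic as Lagrangian sections of $\pi$.

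There is no substantive obstacle here; the only things to be careful about are (i) that the smoothing of the piecewise-linear function $\Phi$ does not change the cohomology class (this is precisely the convention set in the remark preceding \Cref{lem:linearfunctionasnorm}), and (ii) that the Hamiltonian isotopy provided by a class in $H^0(C^\infty)$ indeed descends from $T^*B$ to $X(B)$, which is the standard observation made in \Cref{sec:tropicallagrangiansections}. Both points are already addressed in the preceding discussion, so the proof reduces to a one-line appeal to \Cref{cor:linearfunctionastropical} once the translation has been set up.
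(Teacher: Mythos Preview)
Your proposal is correct and follows exactly the paper's approach: the paper derives this corollary in one line from \Cref{cor:linearfunctionastropical} via the identification of $H^1(\Aff)$ with Hamiltonian isotopy classes of Lagrangian sections, and you have simply unpacked that sentence carefully.
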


In the following Lemma we will show a cobordism relation involving immersed ends.
Note that such relation does \emph{not} make sense in our cobordism group, where ends need to be embedded (\Cref{defn:lagcobgroup_thispaper}).
However, when we fiberwise sum $n+1$ of these cobordism, we do get a valid relation in our cobordism group (see \Cref{lem:transversality}).

\begin{lem}\label{cor:difference_of_flat_sections_codimension}
    Let $f_0,f_1$ be two linear functions and $\Gamma(df_0),\Gamma(df_1)$ the associated constant sections.
    Then 
    $$
    \Gamma(df_1) - \Gamma(df_0) 
    $$
    is cobordant to a linear combination of (possibly immersed) tropical Lagrangians lifting tropical subvarieties of codimension $\geq 1$.
\end{lem}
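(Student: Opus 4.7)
The plan is to reduce the claim, via \Cref{cor:linearfunctionastropical}, to a statement about graphs of differentials of tropical rational functions, and then express each such graph modulo $\Gamma_0$ in terms of tropical Lagrangians using two applications of Hicks' surgery (\Cref{prop:hicks}) together with a fiberwise-translation trick. It suffices to show that for a single linear function $f$, the element $\Gamma(df) - \Gamma_0$ is cobordant to a linear combination of tropical Lagrangians of codimension $\geq 1$: the statement for $\Gamma(df_1) - \Gamma(df_0)$ then follows by subtracting the expressions for $f_0$ and $f_1$, the $\Gamma_0$-terms cancelling. By \Cref{cor:linearfunctionastropical}, I fix $k \in \N$, functions $\delta_\pm$, and vectors $w_\pm$ so that $f$ and the tropical rational function $\Phi := f_{k,\delta_+,w_+} - f_{k,\delta_-,w_-} = \phi^+ - \phi^-$ represent the same class in $H^1(\Aff)$. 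The associated graphs $\Gamma(df)$ and $\Gamma(d\Phi)$ are Hamiltonian isotopic, hence cobordant, so I may replace $\Gamma(df)$ by $\Gamma(d\Phi)$ throughout.

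Next, I apply \Cref{prop:hicks} to the tropical polynomial $\phi^+$ to obtain a Lagrangian cobordism with ends $\Gamma(d\phi^+)$, $(-1)^n \Gamma_0$ and $L_{\phi^+}$, the last of which lies over a small neighborhood of the codimension-$1$ tropical hypersurface $V(\phi^+)$. The crucial step is to now translate this cobordism fiberwise by the smoothed closed $1$-form $-d\phi^-$: closedness of $-d\phi^-$ ensures that fiberwise translation is a symplectomorphism of $X(B)$, which extends trivially to $X(B) \x \C$ and takes Lagrangian cobordisms to Lagrangian cobordisms while preserving base projections. The translated cobordism has ends $\Gamma(d\Phi)$, $(-1)^n \Gamma(-d\phi^-)$ and $t_{-d\phi^-} L_{\phi^+}$, the last of which still projects into the codimension-$1$ neighborhood of $V(\phi^+)$. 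Applied in parallel, the ``$-\phi$'' version of Hicks' surgery (the Remark following \Cref{prop:hicks}) produces a second cobordism whose ends are $\Gamma(-d\phi^-)$, $(-1)^n \Gamma_0$ and $L_{-\phi^-}$.

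Translating these two cobordisms into relations in the cobordism group,
\begin{equation*}
    \Gamma(d\Phi) + (-1)^n \Gamma(-d\phi^-) = t_{-d\phi^-} L_{\phi^+}, \qquad \Gamma(-d\phi^-) + (-1)^n \Gamma_0 = L_{-\phi^-},
\end{equation*}
and eliminating $\Gamma(-d\phi^-)$ yields
\begin{equation*}
    \Gamma(d\Phi) - \Gamma_0 = t_{-d\phi^-} L_{\phi^+} - (-1)^n L_{-\phi^-},
\end{equation*}
which realises $\Gamma(d\Phi) - \Gamma_0$ as a linear combination of (possibly immersed) tropical Lagrangians of codimension $\geq 1$, completing the reduction. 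The main point requiring care is the legitimacy of the fiberwise translation of Hicks' cobordism by $-d\phi^-$: smoothness of the $1$-form follows from the smoothing procedure built into the definition of $d\phi^-$, closedness is automatic, and the resulting intermediate ends $\Gamma(-d\phi^-)$ and $t_{-d\phi^-} L_{\phi^+}$ lie in the class of (possibly immersed) Lagrangians explicitly allowed by the statement of the lemma.
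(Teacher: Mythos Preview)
Your argument is correct and takes a genuinely different route from the paper's proof. The paper writes $\Gamma(d\Phi_i) = \Gamma(d\phi_i^+) \otimes \Gamma(-d\phi_i^-)$ as a fiberwise sum, then applies Hicks' surgery to each factor and invokes the Hanlon--Hicks result (\Cref{prop:hickshanlon}) that Lagrangian correspondences act on cobordism classes, obtaining
\[
\Gamma(df_i) \sim L_{\phi_i^+}\otimes L_{-\phi_i^-} + L_{\phi_i^+} + L_{-\phi_i^-} + \Gamma_0.
\]
This forces the paper to appeal forward to \Cref{lem:transversality} in order to make the cross-term $L_{\phi_i^+}\otimes L_{-\phi_i^-}$ a bona fide immersed Lagrangian. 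Your approach sidesteps both ingredients: you observe that fiberwise addition of a \emph{section} is simply the global symplectomorphism given by fiberwise translation by a closed $1$-form, so you may transport Hicks' cobordism directly rather than pass through the correspondence machinery. The resulting Lagrangians $t_{-d\phi^-}L_{\phi^+}$ and $L_{-\phi^-}$ still project into neighbourhoods of $V(\phi^+)$ and $V(\phi^-)$, as required, and are in fact \emph{embedded} (a symplectomorphism preserves embeddedness), so no transversality input is needed at this stage. The trade-off is that the paper's cross-term already exhibits the ``intersection of hypersurfaces'' mechanism that drives the later induction in \Cref{prop:fiberwise_sum_increases_codimension}, whereas in your formulation that mechanism only appears when you start taking fiberwise sums of several such expressions; but this makes no difference to the logic, since the induction argument and the transversality lemma apply equally well to the hypersurfaces $V(\phi_i^\pm)$ underlying your Lagrangians.
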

\begin{proof}
    By \Cref{cor:constantsectionistropical} we have that each $\Gamma(df_i)$ is Hamiltonian isotopic to the graph $\Gamma(d\Phi_i)$ of a tropical rational function $\Phi_i$.
    Let 
    $$
        \Phi_i = \phi_i^+ - \phi_i^-
    $$
    where $\phi_i^\pm$ are tropical polynomials (in fact,  $\phi_i^\pm = f_{k,\delta^i_\pm,w^i_\pm}$, c.f. \Cref{cor:linearfunctionastropical}).
    We then have that
    \begin{align}\label{eq:flat_section_to_hypersurfaces}
        \begin{split}
            \Gamma(df_i) &\sim \Gamma(d\Phi_i)\\
            & = \Gamma(d(\phi_i^+-\phi_i^-))\\
            & = \Gamma(d\phi_i^+)\otimes\Gamma(d(-\phi_i^-))\\
            &\sim (L_{\phi_i^+} + \Gamma_0) \otimes (L_{-\phi_i^-}+\Gamma_0)\qquad \text{(\Cref{prop:hicks,prop:hickshanlon})}\\
            &\sim L_{\phi_i^+} \otimes L_{-\phi_i^-} + L_{\phi_i^+} + L_{-\phi_i^-} + \Gamma_0.
        \end{split}
    \end{align}
    We show in \Cref{lem:transversality} that the tropical polynomials $\phi_i^\pm = f_{k,\delta^i_\pm,w^i_\pm}$ can be chosen so that the tropical hypersurfaces $V(\phi_i^\pm)$ intersect transversely.
    As a consequence, denoting by $\mcal L_{\otimes}$ the fiberwise addition correspondence 
    $$
    \mcal L_{\otimes} = \{(q, p_1, q, p_2, q, p_1+p_2) \mid q \in B, p_i \in T^*_q B\} \subset X \x X \x X,
    $$
    the fiber product 
    $$
    (L_{\phi_1^+} \x L_{-\phi_1^-}) \x_{X\x X} \mathcal L_{\otimes}
    $$
    is transverse.
    Thus, the Lagrangian
    $$
        L_{\phi_1^+} \otimes L_{-\phi_1^-} := \mathcal L_{\otimes} \circ (L_{\phi_1^+} \x L_{-\phi_1^-})
    $$
    is an immersed Lagrangian.
    This means the relation in \Cref{eq:flat_section_to_hypersurfaces} is valid in a cobordism group of immersed Lagrangians modulo immersed unobstructed cobordisms (although not in our original cobordism group, where ends are required to be embedded, see \Cref{defn:lagcobgroup_thispaper}).

    We then see that 
    $$
        \Gamma(df_1) - \Gamma(df_2) \sim L_{\phi_1^+} \otimes L_{-\phi_1^-} - L_{\phi_2^+} \otimes L_{-\phi_2^-} + L_{\phi_1^+} + L_{-\phi_1^-} - L_{\phi_2^+} - L_{-\phi_2^-}\\
    $$
    lives over (an arbitrarily small neighborhood of) the union $\cup_{i,\pm}V(\phi_i^\pm)$, which has codimension $1$.
\end{proof}

\begin{remark}
    A little care must be taken when choosing the functions $f_\pm$ in the proof above if one wants $L_{\pm \phi_i^\pm}$ to be embedded Lagrangians.
    Namely, the surgery $\Gamma(df^\pm)\#(-\Gamma_0)$ constructed by Hicks will only be embedded if the hypersurface $V(f^\pm)$ satisfies some regularity condition.
    
    To be more precise, given a tropical polynomial $f = \max_i \{f_i\}$ and a point $p \in V(f)$, there is a lattice polytope $in_p(f) \subset T^*_p B$ defined as the convex hull of the differentials $df_i(p) \in T^*_{\Z,p} B$ such that $f(p) = f_i(p)$. 
    The condition for Hicks' surgery $\Gamma(df^\pm)\#(-\Gamma_0)$ to be embedded is that, for any $p\in V(f)$, the polytope $in_p(f)$ contains no lattice points other than its vertices \cite[Theorem 3.17]{hicks2020tropical}.

    In \Cref{cor:linearfunctionastropical} we exhibited a whole family of piecewise linear approximations of any constant function, one for each triple $(k,\delta_+,\delta_-)$.
    It is a result of Sheridan-Smith that for large enough $k$ and generic $\delta$, the tropical hypersurface $V(f_{k,\delta})$ is indeed \emph{regular} \cite[Lemma 3.24]{sheridan2021lagrangian},\footnote{In the literature there are related (but different) definitions of smoothness/regularity of a tropical hypersurface: stronger ones such as requiring that $in_b(f)$ has minimal volume, and weaker ones such as requiring that it is simply a simplex.} i.e. the polytopes $in_p(f)$ are simplices containing no lattice points other than its vertices. 
    Choosing large $k$ and generic $\delta$, we can make the Lagrangians $L_{\pm \phi_i^\pm}$  is embedded.
\end{remark}

 \subsection{Transversality of tropical hypersurfaces}
Recall from the previous section that, given any constant Lagrangian section $L = \Gamma(df)$, where $f$ is a linear function, there exist vectors $w_\pm \in TB$ such that the difference of tropical polynomials
$$
    f_{k,\delta_+,w_+} - f_{k,\delta_-,w_-}
$$
defines (after smoothing) the same class as $f$ in $H^1(\Aff)$ (\Cref{cor:linearfunctionastropical}).
This result allows some freedom in the choice of the tropical polynomials, namely it holds for any $k \in \N$ and $\delta_\pm$  satisfying \Cref{eq:periodicitydelta}.
We show in \Cref{lem:transversality} that such freedom is enough to achieve transversality of tropical hypersurfaces, where transversality of tropical hypersurfaces is understood as follows:
\begin{defn}
    Two tropical hypersurfaces are said to intersect 
    \emph{transversely} if each pair of cells intersects transversely.
\end{defn}

Now let $k \in \N$ and consider the family of tropical polynomials $f_{k,\delta}$ of \Cref{eq:fkdeltadefn}.
Note that the moduli-space of possible $\delta$'s is $
\R^N$, where $N = \# \coker(k\cdot c)$, being $c$ the polarization (see \Cref{eq:periodicitydelta}).
We equip $\R^{N}$ with the standard tropical affine structure.

\begin{prop}\label{prop:neighborhood_regular}
    Let $\delta \in \R^N$ be such that $V(f_{k,\delta})$ is regular.
    Let $U \subset \R^N$ be a sufficiently small neighborhood of $\delta$.
    Then for any tropical subvariety $V \subset B$, the set
    $$
    \mathcal T_V = \{\delta' \in U \mid V(f_{k,\delta'}) \pitchfork V\text{ intersect transversely}\}
    $$
    is the complement of a finite union of proper affine subspaces.
\end{prop}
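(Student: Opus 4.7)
The plan is to exploit the affine-linear dependence of the cells of $V(f_{k,\delta'})$ on the parameter $\delta'$, combined with a finiteness argument over pairs of cells in $V(f_{k,\delta'})$ and $V$.

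First I would show that, after shrinking $U$, the combinatorial type of $V(f_{k,\delta'})$ is constant over $U$. Since $V(f_{k,\delta})$ is regular, its dual subdivision of the Newton polytope consists of non-degenerate simplices; this combinatorial type is stable under sufficiently small perturbations of $\delta$, so for $\delta' \in U$ the tropical hypersurface $V(f_{k,\delta'})$ has the same collection of cells, indexed by the same dual simplices. Each cell $\sigma_{\delta'}$ has the same tangent space $V_\sigma$ as $\sigma_{\delta}$ and is a parallel translate of it: if the dual simplex has vertices $\alpha_0, \dots, \alpha_m \in T^*_\Z B$ with $m = \mathrm{codim}\,\sigma$, then the affine hull $A_\sigma(\delta')$ is cut out by the equations
\[
(\alpha_i - \alpha_0)(v) = c_0(\delta') - c_i(\delta'), \quad i = 1, \dots, m,
\]
where $c_j(\delta') = -|\alpha_j^\#|^2/(2k) + \delta'(\alpha_j)$. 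Thus $A_\sigma(\delta')$ depends affine-linearly on $\delta'$.

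Next, since $B$ is compact, both $V(f_{k,\delta'})$ and $V$ have finitely many cells, so there are finitely many pairs $(\sigma,\tau)$ to check. For each pair with $V_\sigma + V_\tau = T B$, transversality is automatic whenever the cells meet, imposing no condition on $\delta'$. For a pair with $V_\sigma + V_\tau \subsetneq TB$, transverse intersection forces $\sigma_{\delta'} \cap \tau = \emptyset$; by the standard criterion for parallel affine subspaces, this fails precisely when the class $[A_\sigma(\delta')] - [A_\tau]$ vanishes in $TB/(V_\sigma + V_\tau)$, which is an affine condition on $\delta'$.

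The main step, and the key use of regularity, is to show each such condition defines a \emph{proper} affine subspace. Because the vertices $\alpha_0, \dots, \alpha_m$ are affinely independent (regularity), the map
\[
(\delta'(\alpha_0),\dots,\delta'(\alpha_m)) \;\longmapsto\; [A_\sigma(\delta')] \in TB/V_\sigma
\]
is affine-linear and surjective: varying the $c_j(\delta')$ independently translates $A_\sigma(\delta')$ by an arbitrary vector normal to $V_\sigma$. Composing with the quotient $TB/V_\sigma \twoheadrightarrow TB/(V_\sigma + V_\tau)$ still yields a surjection onto a nonzero space, so the vanishing condition defining the bad locus is a nontrivial affine constraint, hence cut out by a proper affine subspace of $U$. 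Taking the union over the finitely many bad pairs exhibits the complement of $\mathcal T_V$ as a finite union of proper affine subspaces.

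The main obstacle is the independence of the parameters $\delta'(\alpha_i)$: since $\delta$ is a function on the finite quotient $T^*_\Z B / kc(H_1(B;\Z))$, two vertices of a dual simplex lying in the same coset would produce a redundancy. However, congruent vertices $\alpha_i \equiv \alpha_j \pmod{kc(H_1(B;\Z))}$ would force the cell to lie in the hyperplane $\{(\alpha_i - \alpha_j)(v) = \text{const}\}$, which is a codimension-one condition on $\delta$; after possibly further shrinking $U$, regularity of $V(f_{k,\delta})$ rules this out, and the freedom needed for the properness argument is available.
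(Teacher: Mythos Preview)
Your argument is essentially the paper's: fix the combinatorial type on $U$, reduce to finitely many pairs of cells $(\sigma,\tau)$, and for each non-transverse pair show the bad locus in $\delta'$ is a proper affine subspace via surjectivity of $\delta'\mapsto[A_\sigma(\delta')]\in TB/V_\sigma$. The paper defers this surjectivity to a separate lemma, proved by reducing to a local pair-of-pants model, while you argue it directly from the defining equations of $A_\sigma(\delta')$; both routes work.

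Your final paragraph, however, does not resolve the independence issue correctly. The statement that $\sigma$ lies in the hyperplane $\{(\alpha_i-\alpha_j)(v)=\text{const}\}$ is not a condition on $\delta$---it is part of the very definition of the cell $\sigma$ and holds for every $\delta'$, so it cannot be avoided by shrinking $U$. The correct fix is purely combinatorial: if two vertices of a dual simplex satisfy $\alpha_i-\alpha_j=kc(\gamma)$ with $\gamma\in H_1(B;\Z)\setminus\{0\}$, then the edge $[\alpha_j,\alpha_i]$ contains the interior lattice points $\alpha_j+\ell\,c(\gamma)$ for $1\le\ell\le k-1$, which for $k\ge 2$ contradicts regularity (no lattice points other than vertices). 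Hence regularity itself forces the vertices of each dual simplex to be pairwise distinct in $T^*_\Z B/kc(H_1(B;\Z))$, and the values $\delta'(\alpha_0),\dots,\delta'(\alpha_m)$ really can be varied independently. The paper's local-model argument tacitly relies on this same fact.
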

\begin{proof}
    The following idea is inspired by the proofs of 
    \cite[Lemma 3.24, Lemma 3.27]{sheridan2021lagrangian}. 
    For simplicity we will use the notation $f_\delta = f_{k,\delta}$.

    Since transversality is an open condition, if $V(f_{k,\delta}) \pitchfork V$ is transverse then  $\mcal T_V = U$.
    Else,  suppose that $V(f_\delta)$ and $V$ intersect non-transversely at some cells $S_1 \subset V(f_\delta), S_2 \subset V$.
    After translation, we can assume that $\delta= 0$.
    
    Let $\pi_{S_1}:TB \to TB/TS_1$ be the natural projection.
    Affine subspaces parallel to $S_1$ are (locally) parametrized by $TB/TS_1$, and the  set
    $$
        \{ w \in TB/TS_1 \mid \pi_{S_1}^{-1}(w) \cap TS_2 \neq \emptyset\}
    $$
    of translations of $S_1$ that intersect $S_2$ is an affine subspace of $TB/TS_1$, its tangent space being $[TS_2]=\pi_{S_1}(TS_2) \subset TB/TS_1$.
    Furthermore, under the assumption that $S_1$ and $S_2$ intersect non-transversely, the above affine subspace is proper (i.e. $[TS_2] \subsetneq TB/TS_1$).

    We consider the  map
    \begin{align*}
        \Delta_{S_1}: U\subset \R^N \to &TB \xrightarrow{\pi_{S_1}} TB/TS_1\\
        \Delta_{S_1}(\delta') &=\left\lbrack \frac{d}{dt} \bigg|_{t=0}S_{1,\delta+t(\delta'-\delta)}\right\rbrack
    \end{align*}
    recording the direction of translation (mod $TS_1$) of the cell $S_1$ under a sufficiently small perturbation  of $f_\delta$.
    It follows from the paragraph above that if $\Delta_{S_1}(\delta') \notin [TS_2]$ then $S_{1\delta'} \cap S_2 = \emptyset$.

    We show in \Cref{lem:perturbations_surject} that if $V(f_\delta)$ is regular then the map $\Delta_{S_1}$ is surjective onto a neighborhood of the origin.
    Thus $\Delta_{S_1}^{-1}([TS_2])$---which is the collection of $\delta'$'s such that the intersection $S_{1\delta'} \cap S_2$ is non-transverse---is a \emph{proper} affine subspace of $\R^N$ (it is proper because of surjectivity and  $[TS_2] \subsetneq TB/TS_1$).
    Taking the union of these subspaces over all pairs $(S_1,S_2)$ of non-transversely intersecting cells $S_1\subset V(f_\delta), S_2 \subset V$ we obtain a finite union of proper affine subspaces of $\R^{N}$, whose complement is precisely $\mathcal T_{V}$.
\end{proof}

\begin{cor}\label{prop:transversality}
    Let $V \subset B$ be a tropical subvariety.
    For $k\in \N$ large enough, the set
    $$
    \mathcal T^{reg}_V = \{\delta \in \R^N \mid V(f_{k,\delta}) \text{ regular}, V(f_{k,\delta}) \pitchfork V\text{ intersect transversely}\}
    $$
    is dense and open.
\end{cor}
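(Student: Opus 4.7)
The plan is to combine two density statements: regularity of $V(f_{k,\delta})$ (due to Sheridan-Smith, invoked in the remark following \Cref{cor:difference_of_flat_sections_codimension}) and transversality to $V$ (the content of \Cref{prop:neighborhood_regular}). Both openness assertions are standard: regularity of a tropical hypersurface is preserved under sufficiently small perturbations of the defining tropical polynomial, and transversality of polyhedral complexes is likewise an open condition cell-by-cell. So the content is density.

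First I would invoke \cite[Lemma 3.24]{sheridan2021lagrangian} to fix $k \in \N$ large enough that the regular locus
$$
\mathcal R_k := \{\delta \in \R^N \mid V(f_{k,\delta}) \text{ is regular}\}
$$
is dense (and open) in $\R^N$. Then to show density of $\mathcal T^{reg}_V$, let $W \subset \R^N$ be any nonempty open set; I want to produce $\delta' \in W \cap \mathcal T^{reg}_V$. Since $\mathcal R_k$ is dense, we can pick some $\delta_0 \in W \cap \mathcal R_k$. Apply \Cref{prop:neighborhood_regular} at $\delta_0$ to obtain a small neighborhood $U \subset W \cap \mathcal R_k$ of $\delta_0$ on which the transversality locus $\mathcal T_V \cap U$ is the complement of a finite union of proper affine subspaces of $U$, hence dense and open in $U$. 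Any $\delta' \in \mathcal T_V \cap U$ then lies in $W$ and satisfies both regularity (since $U \subset \mathcal R_k$) and transverse intersection with $V$, so $\delta' \in W \cap \mathcal T^{reg}_V$.

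For openness of $\mathcal T^{reg}_V$, let $\delta \in \mathcal T^{reg}_V$. Regularity of $V(f_{k,\delta})$ persists in a neighborhood of $\delta$ by an open-condition argument on the polytopes $in_p(f_{k,\delta})$ (small perturbations cannot create new lattice points in a simplex). Transversality of $V(f_{k,\delta}) \pitchfork V$ likewise persists on a neighborhood, since each pair of intersecting cells depends continuously on $\delta$ and transversality is open in the space of affine subspace pairs. The intersection of these two open neighborhoods is contained in $\mathcal T^{reg}_V$.

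The only step that is not routine is the density of transversality inside a neighborhood of a regular parameter, and that is precisely what \Cref{prop:neighborhood_regular} provides; modulo that proposition the corollary is a short combination of density and openness arguments.
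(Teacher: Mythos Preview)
Your proposal is correct and follows essentially the same approach as the paper: both combine the density of the regular locus (from \cite[Lemma 3.24]{sheridan2021lagrangian}) with \Cref{prop:neighborhood_regular} to obtain density, and both invoke that regularity and transversality are open conditions. The only cosmetic difference is that you phrase density via ``every nonempty open set meets $\mathcal T^{reg}_V$'' whereas the paper uses a sequential argument, but the content is identical.
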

\begin{proof}
    First note that being regular and intersecting transversely are both open conditions, hence $\mathcal T^{reg}_V$ is open.

    To show that it is dense, let $\delta \in \R^N$ be arbitrary.
    We first choose a sequence $\{\delta_n\}_{n\in \N}$ converging to $\delta$ such that $V(f_{k,\delta_n})$ is regular for all $n$ (note that the set of $\delta$'s such that $V(f_{k,\delta})$ is regular is dense for large enough $k$ \cite[Lemma 3.24]{sheridan2021lagrangian}).
    Since the $V(f_{k,\delta_n})$ are regular, \Cref{prop:neighborhood_regular} applies and we have a local description of a neighborhood $U_n$ of $\delta_n$ in $\mathcal T^{reg}_V$.
    It is the complement of a finite union of proper affine subspaces---in particular, it is dense. 
    Let $\delta_{nn} \in U_n$ such that $|\delta_{nn} - \delta_n| < 1/n$.
    Then the sequence $\{\delta_{nn}\}_{n\in \N}$ is contained in $\mathcal T^{reg}_V$ and converges  to $\delta$, showing $\mathcal T^{reg}_V$ is dense.
\end{proof}

The following transversality result is the key to prove our main theorem, \Cref{thm:filtration_terminates}.
For $k\in \N, w^i_\pm \in H^0(TB)$ and $\delta^i_\pm$ as in \Cref{eq:periodicitydelta}, we use the notation
$$
    f_{s_i} = f_{k,\delta^i_{s_i},w^i_{s_i}},\quad s_i \in \{\pm1\}.
$$

\begin{prop}\label{lem:transversality}
Let $f_1,\dots,f_{n+1}$ be  linear functions and $w^{1}_\pm,\dots,w^{n+1}_\pm \in H^0(TB)$ the vectors given by \Cref{cor:linearfunctionastropical}. 
There exist $k \in \N$ and $\delta^1_\pm, \dots, \delta^{n+1}_\pm$ such that:
\begin{enumerate}
    \item The hypersurfaces $V(f_{s_i})$ are regular.
    \item For any choices $s_i \in\{\pm1\}$ and any $j \in \{1,\dots,n+1\}$, the intersection
    $$
        \bigcap_{i=1}^{j}V(f_{s_i})
    $$
    is transverse.
\end{enumerate}
In particular, the intersection $\bigcap_{i=1}^{n+1}V(f_{s_i})$ is empty.
\end{prop}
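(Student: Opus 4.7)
My plan is to induct on the number of hypersurfaces $V(f_{s_i})$ added, at each step using \Cref{prop:transversality} to choose the next pair of perturbation data $\delta^{j+1}_\pm$ so as to preserve both regularity and transversality with respect to all previously constructed intersections.

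First I would fix $k \in \N$ large enough that for generic $\delta$, the tropical hypersurface $V(f_{k,\delta,w})$ is regular (this exists by \cite[Lemma 3.24]{sheridan2021lagrangian}, and the choice of $k$ does not depend on the translation vector $w$ since translation does not affect regularity). With this $k$ fixed, I would build the sequence $\delta^1_\pm, \delta^2_\pm, \dots, \delta^{n+1}_\pm$ inductively.

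For the inductive step, assume we have chosen $\delta^1_\pm, \dots, \delta^j_\pm$ such that each $V(f_{s_i})$, $i \leq j$, is regular and such that for every sign sequence $(s_1, \dots, s_j) \in \{\pm 1\}^j$ the intersection $\bigcap_{i=1}^{j} V(f_{s_i})$ is transverse (and hence a well-defined tropical subvariety of $B$). There are $2^j$ such intersections in total, giving a finite collection of tropical subvarieties $\{V_{(s_1,\dots,s_j)}\}$. To extend the induction to $j+1$, I need to choose $\delta^{j+1}_+$ (and then similarly $\delta^{j+1}_-$) such that $V(f_{k,\delta^{j+1}_+,w^{j+1}_+})$ is regular and intersects each $V_{(s_1,\dots,s_j)}$ transversely. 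By \Cref{prop:transversality} applied to each of the $2^j$ subvarieties $V_{(s_1,\dots,s_j)}$, the set of admissible $\delta^{j+1}_+$ for each is dense and open; their finite intersection is therefore dense and open, so a valid choice exists. The same argument, now applied to the expanded collection $\{V_{(s_1,\dots,s_j,+)}\} \cup \{V_{(s_1,\dots,s_j)}\}$, produces $\delta^{j+1}_-$.

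Running the induction up to $j = n+1$ establishes (1) and (2) of the proposition. For the final ``in particular'' clause, each $V(f_{s_i})$ is a tropical hypersurface in the $n$-dimensional torus $B$, hence of codimension $1$. Transversality of the intersection $\bigcap_{i=1}^{n+1} V(f_{s_i})$ forces the codimensions to add, giving codimension $n+1 > n$, so the intersection must be empty. The main obstacle I anticipate is book-keeping: verifying that \Cref{prop:transversality} genuinely applies uniformly to the translated family $f_{k,\delta,w}$ (which follows since the translation by $w$ is by a fixed vector and does not interact with the $\delta$-dependence), and confirming that at each inductive step the previously constructed intersections are themselves tropical subvarieties to which \Cref{prop:transversality} can be applied.
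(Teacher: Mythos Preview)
Your proposal is correct and follows essentially the same inductive strategy as the paper: fix $k$ large, then successively choose $\delta^j_\pm$ so that the new hypersurface is regular and transverse to all previously built intersections, invoking \Cref{prop:transversality} at each step and using that a finite intersection of dense open sets is non-empty. The only difference is cosmetic---you treat $\delta^{j+1}_+$ and $\delta^{j+1}_-$ in two separate sub-steps and impose a slightly stronger (but harmless) transversality condition on $\delta^{j+1}_-$, whereas the paper handles both signs at once; your remark that the fixed translation by $w$ does not affect the applicability of \Cref{prop:transversality} is a point the paper leaves implicit.
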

\begin{proof}
    We first choose $\delta^1_\pm$ such that the hypersurfaces $V(f_{s_1})$ are regular (recall this is a generic condition for large enough $k$).
    We then choose $\delta^2_\pm$ such that $V(f_{s_2})$ are regular and the intersections
    $$
    V(f_{s_1}) \pitchfork V(f_{s_2})
    $$
    are transverse.
    Note that the possible choices of such $\delta^2_+$ is the intersection 
    $$
        \mathcal T^{reg}_{V(f_{k,\delta^1_{+},w^1_{+}})} \cap \mathcal T^{reg}_{V(f_{k,\delta^1_{-},w^1_{-}})}
    $$
    and similarly for $\delta^2_-$.
    This is the intersection of two dense open subsets by \Cref{prop:transversality}, hence it is non-empty.

    We can then iterate this process and choose $\delta^k_\pm$ such that the intersections
    $$
        \left(\bigcap_{i=1}^{k-1} V(f_{s_i})\right) \pitchfork V(f_{s_k})
    $$
    are transverse for all $s_1,\dots,s_{k-1} \in \{\pm1\}$.
    By transversality the set-theoretic intersection 
    $$
        V = \bigcap_{i=1}^{k-1} V(f_{s_i})
    $$
    is a tropical subvariety \cite[Section 4.2]{mikhalkin2006tropical}.
    Hence we can again apply \Cref{prop:transversality} to guarantee the set of such $\delta^k_\pm$ is the intersection of ($2^{k-1}$) dense open subsets---in particular,  non-empty.
\end{proof}

We conclude by showing the surjectivity result claim in the proof of \Cref{prop:neighborhood_regular}:
\begin{lem}\label{lem:perturbations_surject}
    Let $V(f)$ be a regular tropical hypersurface and $S \subset V(f)$ any cell.
    Then the map $\Delta_S$ is surjective onto a neighborhood of the origin.
\end{lem}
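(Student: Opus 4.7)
The plan is to compute $\Delta_S$ explicitly in coordinates and reduce its surjectivity to a combinatorial statement about $in_p(f)$.

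Fix an interior point $p\in S$ and let $j$ denote the codimension of $S$ in $B$. By regularity, the polytope $in_p(f)$ is a $j$-dimensional lattice simplex with vertices $\alpha_0,\dots,\alpha_j$ realising the maximum in \Cref{eq:fkdeltadefn}. Hence, near $p$, the cell $S$ is cut out by the $j$ linear equations
$$
(\alpha_i - \alpha_0)(v) \;=\; \frac{|\alpha_i^\#|^2 - |\alpha_0^\#|^2}{2k} - \bigl(\delta(\alpha_i) - \delta(\alpha_0)\bigr), \qquad i = 1,\dots,j,
$$
and the covectors $\alpha_1-\alpha_0,\dots,\alpha_j-\alpha_0$ are linearly independent (since $in_p(f)$ is $j$-dimensional) and therefore span the conormal to $S$. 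They induce an isomorphism $\Psi_S: TB/TS \xrightarrow{\sim} \R^j$, $[v]\mapsto ((\alpha_i-\alpha_0)(v))_{i=1}^{j}$.

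Under a perturbation $\delta\mapsto \delta'$ only the $\delta$-dependent terms on the right-hand side change, so the perturbed cell $S_{\delta'}$ is obtained by translating in the direction $[v]\in TB/TS$ satisfying $\Psi_S([v]) = -\bigl((\delta'(\alpha_i)-\delta'(\alpha_0)) - (\delta(\alpha_i)-\delta(\alpha_0))\bigr)_i$. Differentiating at $t=0$, the map $\Delta_S:\R^N\to TB/TS$ corresponds, under $\Psi_S$, to the linear map
$$
\R^N \longrightarrow \R^j, \qquad \delta' \longmapsto -\bigl(\delta'(\alpha_i)-\delta'(\alpha_0)\bigr)_{i=1}^{j}.
$$
Recalling from \Cref{eq:periodicitydelta} that $\delta$ is a real-valued function on the finite set $T^*_\Z B / kc(H_1(B;\Z))$ (of cardinality $N$), this map is surjective onto $\R^j$ precisely when the classes $[\alpha_0],\dots,[\alpha_j]\in T^*_\Z B / kc(H_1(B;\Z))$ are all distinct.

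Distinctness follows from regularity: if $[\alpha_i]=[\alpha_l]$ for some $i\neq l$, then $\alpha_l-\alpha_i = kc(\gamma)$ for a nonzero $\gamma\in H_1(B;\Z)$, and the edge $\overline{\alpha_i\alpha_l}\subset in_p(f)$ then contains the interior lattice points $\alpha_i + m\,c(\gamma)$, $m=1,\dots,k-1$, contradicting the hypothesis that $in_p(f)$ has no lattice points other than its vertices (for $k\geq 2$; the lemma is only used in the large-$k$ regime of \Cref{prop:transversality}). The main subtle step is the identification of the conormal to the geometric cell $S$ with the covector differences from $in_p(f)$; once this bridge between geometry and combinatorics is in place, surjectivity is essentially a counting argument about lattice points in the simplex $in_p(f)$.
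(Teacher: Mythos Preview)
Your proof is correct and takes a somewhat more direct route than the paper's. The paper argues by reduction to a local model: regularity implies that near a point of the $(n-1-k)$-stratum the hypersurface looks (after a real-linear change of coordinates) like $\mathcal{P}_k \times \R^{n-1-k}$, where $\mathcal{P}_k \subset \R^{k+1}$ is the standard tropical pair of pants cut out by $h_\delta = \max\{\delta_0, x_1+\delta_1,\dots,x_{k+1}+\delta_{k+1}\}$; the perturbation map is then computed in this model as $(\delta_0,\dots,\delta_{k+1}) \mapsto (\delta_1-\delta_0,\dots,\delta_{k+1}-\delta_0)$, which is visibly surjective. You bypass the local model entirely, writing down the defining equations of $S$ directly in terms of the vertices $\alpha_0,\dots,\alpha_j$ of $in_p(f)$ and arriving at the same linear map.

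Your argument is in fact more careful on one point the paper leaves implicit. The paper's reduction tacitly identifies the global perturbation space $\R^N$ (functions on $T^*_\Z B/kc(H_1(B;\Z))$) with the local space $\R^{k+2}$ of the pair-of-pants model; this is only legitimate if the classes $[\alpha_0],\dots,[\alpha_j]$ in the periodicity quotient are pairwise distinct, since otherwise the evaluation map $\R^N \to \R^{k+2}$ fails to surject. You make this step explicit and deduce distinctness from the no-interior-lattice-point condition on the edges of $in_p(f)$ (valid for $k\geq 2$, which is the only regime in which the lemma is invoked via \Cref{prop:transversality}). The paper's local-model phrasing is cleaner to state, but your version exposes the combinatorial mechanism---and the role of regularity---more transparently.
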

\begin{proof}
    Let $p \in V(f)^{(n-1-k)}$ be a point in the $(n-1-k)$-stratum of $V(f)$.
    If $\psi: U \subset B \to \R^n$ is an affine chart around $p$, then by regularity of $V(f)$ there exists $A \in \GL(n;\R)$ such that
    $$
    (A \circ \psi)(V(f)) = \mathcal P_{k} \x \R^{n-1-k},\quad (A\circ\psi)(p)=0
    $$
    where $\mathcal P_{k} \subset \R^{k+1}$ is the $k$-dimensional tropical pair of pants.
    (Note that a priori $A$ has real coefficients, hence $A \circ \psi$ is not an affine chart, but we do not need that for the proof.)
    It follows that the map $\Delta_S$ is surjective onto a neighborhood of the origin if and only if the map 
    $$
        \Delta_{\{0\}}: \R^{k+2} \to T_0 \R^{k+1} \cong \R^{k+1}
    $$
    associated to $\mathcal P_{k}$ is surjective. 
    We have that $\mathcal P_{k} = V(h_0)$ for
    $$
        h_\delta = \max\{\delta_0,x_1+\delta_1,\dots,x_{k+1}+\delta_{k+1}\}.
    $$
    A simple computation shows that the map $\Delta_{\{0\}}$ is given by
    $$
        (\delta_0,\dots,\delta_{k+1}) \mapsto (\delta_1-\delta_0,\dots,\delta_{k+1}-\delta_0)
    $$
    which is clearly surjective.
\end{proof}

\subsection{The filtration terminates}
\begin{lem}\label{prop:fiberwise_sum_increases_codimension}
    For any $k \in \{1,\dots,n+1\}$ and flat Lagrangian sections $\Gamma_1^\pm,\dots,\Gamma_k^\pm$, the Lagrangian cobordism class of 
    $$
        \bigotimes_{i=1}^k (\Gamma_i^+ - \Gamma_i^-)
    $$
    is represented by a linear combination of (possibly immersed) tropical Lagrangians lifting subvarieties of codimension at least $k$.
\end{lem}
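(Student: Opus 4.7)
The plan is to proceed by induction on $k$, using Corollary \ref{cor:difference_of_flat_sections_codimension} as the base case and Lemma \ref{lem:transversality} to arrange the transversality that makes the inductive step go through. The base case $k=1$ is precisely Corollary \ref{cor:difference_of_flat_sections_codimension}: the difference $\Gamma_1^+ - \Gamma_1^-$ is cobordant to a $\Z$-linear combination of tropical Lagrangians lifting codimension $\geq 1$ tropical subvarieties.

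For the inductive step, I would first apply the construction in the proof of Corollary \ref{cor:difference_of_flat_sections_codimension} to each factor $\Gamma_i^+ - \Gamma_i^-$ independently. Following equation \eqref{eq:flat_section_to_hypersurfaces}, each section $\Gamma_i^s$ ($s\in\{+,-\}$) is cobordant to $L_{\phi_i^{s,+}} \otimes L_{-\phi_i^{s,-}} + L_{\phi_i^{s,+}} + L_{-\phi_i^{s,-}} + \Gamma_0$, where $\phi_i^{s,\pm}$ are tropical polynomials of the form $f_{k_0,\delta^{i,s,\pm}, w^{i,s,\pm}}$ provided by Corollary \ref{cor:linearfunctionastropical}. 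The $\Gamma_0$ contributions cancel in the difference $\Gamma_i^+-\Gamma_i^-$, so this difference is a $\Z$-linear combination of tropical Lagrangians each of which lifts a codimension $\geq 1$ tropical subvariety built from the hypersurfaces $V(\phi_i^{s,\pm})$.

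Next I would invoke Lemma \ref{lem:transversality}, applied to the collection of at most $4k$ tropical polynomials $\{\phi_i^{s,\pm}\}$ associated to the $2k$ linear functions defining the sections $\Gamma_i^\pm$ (the iterative argument in its proof works for any number of linear functions as long as one only requires intersections of at most $n+1$ hypersurfaces to be transverse; since $k\leq n+1$ this suffices). This choice of $k_0$ and $\delta$'s ensures that any $k$ of the hypersurfaces $V(\phi_i^{s,\pm})$---in particular, any $k$ taken one from each factor $\Gamma_i^+-\Gamma_i^-$---intersect transversely.

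Distributing $\bigotimes_{i=1}^k(\Gamma_i^+-\Gamma_i^-)$, every term becomes a fiberwise product of $k$ Lagrangians, one from each factor, each lifting a codimension $\geq 1$ subvariety assembled from the above transverse family of hypersurfaces. Transversality guarantees the relevant fiber products with the fiberwise addition correspondence $\mcal L_\otimes$ are clean, so each geometric composition is well-defined as a (possibly immersed) Lagrangian by Proposition \ref{prop:hickshanlon}, and its projection to $B$ is contained in an intersection of $k$ transversely intersecting codimension $\geq 1$ tropical subvarieties, hence in a tropical subvariety of codimension $\geq k$. The only subtle point is the simultaneous transversality bookkeeping, which is handled cleanly by Lemma \ref{lem:transversality}; everything else is a careful distributive expansion.
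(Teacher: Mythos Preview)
Your proposal is correct and follows essentially the same approach as the paper: induction on $k$ with base case Corollary~\ref{cor:difference_of_flat_sections_codimension}, and Proposition~\ref{lem:transversality} supplying the transversality needed so that each successive fiberwise sum drops the codimension by at least one. The paper's write-up is organized as a cleaner step-by-step induction (treat $\bigotimes_{i=1}^k(\Gamma_i^+-\Gamma_i^-)$ as given of codimension $\geq k$, then adjoin $\Gamma_{k+1}^+-\Gamma_{k+1}^-$ made transverse to it), whereas you expand all factors at once and track the hypersurfaces simultaneously, but the content is the same.
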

\begin{proof}
    We proceed by induction.
    The result is true for $k=1$ by \Cref{cor:difference_of_flat_sections_codimension}. 
    Suppose that it holds for some $k$. 
    Using again \Cref{cor:difference_of_flat_sections_codimension}  we have that $\Gamma_{k+1}^+ - \Gamma_{k+1}^-$ is cobordant to a lift of tropical subvarieties of codimension at least $1$.
    These subvarieties can be arranged to be transverse to all the subvarieties appearing in $\otimes_{i=1}^k (\Gamma_i^+ - \Gamma_i^-)$ (cf. \Cref{lem:transversality}).
    It then follows that the fiberwise sum
    $$
    \left(\otimes_{i=1}^k (\Gamma_i^+ - \Gamma_i^-)\right) \otimes (\Gamma_{k+1}^+ - \Gamma_{k+1}^-)
    $$
    lives over tropical subvarieties of codimension at least $k+1$.
\end{proof}

\begin{thm}\label{thm:filtration_terminates}
    Let $B$ be a polarized tropical affine $n$-torus and $X = X(B)$ the corresponding symplectic manifold.
    Then the decreasing filtration 
    $$
        \dots \subset \Cobfib(X)_1 \subset \Cobfib(X)_0 = \Cobfib(X)
    $$
    given by \Cref{defn:filtration} satisfies $\Cobfib(X)_{n+1} = 0$.    
\end{thm}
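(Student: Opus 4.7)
The plan is to combine the two main geometric observations from \Cref{sec:proof_sketch}: exchange Pontryagin product on fibers with fiberwise sum of constant sections, and then apply the tropicalization results of the previous subsections to drive up codimension until the Lagrangians in question are forced to be empty.

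First, I would fix a generator of $F^{n+1}\Cobfib(X) = \Cobfib(X)_{\hom}^{\star(n+1)}$, which without loss of generality can be taken of the form
\[
  (F_{b_1^+} - F_{b_1^-}) \star \cdots \star (F_{b_{n+1}^+} - F_{b_{n+1}^-}).
\]
Using \Cref{eq:pontryagin_vs_fiberwise}, I switch viewpoint to the \emph{other} Lagrangian torus fibration $\pi' : X(B) \to F$ (with fiber $B$), under which each torus fiber $F_b$ of $\pi$ is identified with a flat section $\Gamma_b$ of $\pi'$. The above product therefore gets rewritten as the fiberwise sum
\[
  (\Gamma_{b_1^+} - \Gamma_{b_1^-}) \otimes_{\pi'} \cdots \otimes_{\pi'} (\Gamma_{b_{n+1}^+} - \Gamma_{b_{n+1}^-})
\]
inside $\Cob(X(B))$, where the flat sections live over the polarized tropical affine torus $F$.

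Next, I invoke \Cref{prop:fiberwise_sum_increases_codimension} applied with $k = n+1$: this asserts that the above fiberwise sum of $n+1$ differences of flat sections is Lagrangian cobordant to a linear combination of (possibly immersed) tropical Lagrangians lifting tropical subvarieties of $F$ of codimension at least $n+1$. This is where all the heavy lifting sits: the inductive step relies on rewriting each difference $\Gamma_i^+ - \Gamma_i^-$ as a lift of tropical hypersurfaces via Hicks' surgery (\Cref{cor:difference_of_flat_sections_codimension}), together with the transversality arrangement of \Cref{lem:transversality}, which ensures that when we intersect with the next hypersurface the codimension genuinely increases by one and the geometric compositions with the fiberwise sum correspondence $\mcal L_\otimes$ remain well-defined.

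To finish, I simply observe that $F$ has real dimension $n$, so any tropical subvariety of codimension $\geq n+1$ is empty. Consequently the Lagrangians appearing in the above linear combination are all empty, and the class of our generator in $\Cobfib(X)$ is zero. Since such generators span $F^{n+1}\Cobfib(X)$, this yields $\Cobfib(X)_{n+1} = 0$. The only potential subtlety I foresee is bookkeeping: the intermediate Lagrangians produced by Hicks' surgery are immersed and the relations in \Cref{eq:flat_section_to_hypersurfaces} hold in the larger cobordism group of immersed Lagrangians, so I need to check (as is done in the proof of \Cref{cor:difference_of_flat_sections_codimension}) that after taking the full fiberwise sum of $n+1$ such pieces the transversality from \Cref{lem:transversality} guarantees the final geometric composition produces a valid relation in our embedded-ends cobordism group $\Cobfib(X)$ of \Cref{defn:lagcobgroup_thispaper}, and ultimately a vacuous one.
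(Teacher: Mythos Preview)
Your proposal is correct and follows essentially the same route as the paper: take a generator of $F^{n+1}\Cobfib(X)$, use \Cref{eq:pontryagin_vs_fiberwise} to rewrite the Pontryagin product as a fiberwise sum of flat sections for the fibration $\pi'$, and then apply \Cref{prop:fiberwise_sum_increases_codimension} with $k=n+1$ to conclude that the resulting Lagrangian lives over a codimension-$(n+1)$ tropical subvariety of the $n$-dimensional base, hence is empty. Your additional remark about the immersed-vs-embedded bookkeeping is apt and matches the care taken in the paper around \Cref{cor:difference_of_flat_sections_codimension} and \Cref{lem:transversality}.
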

\begin{proof}
    Let
    $$
    \mathbb{L} = (F_1^+ - F_1^-) \star_{\pi} \dots \star_{\pi} (F_{n+1}^+ - F_{n+1}^-)
    $$
    be a generator of $\Cobfib(X)_{n+1}$.
    Thinking of each $F_i^\pm$ as a flat section $\Gamma_i^\pm$ of $\pi':X(B) \to F$ and recalling that the Pontryagin product $\star_{\pi}$ is fiberwise addition $\otimes_{\pi'}$ of the dual fibration $\pi': X(B) \to F$ (c.f. \Cref{eq:pontryagin_vs_fiberwise}), we have that
    \begin{align*}
        \mathbb{L} &= (F_1^+ - F_1^-) \star_{\pi} \dots \star_{\pi} (F_{n+1}^+ - F_{n+1}^-)\\
        & = (F_1^+ - F_1^-) \otimes_{\pi'} \dots \otimes_{\pi'} (F_{n+1}^+ - F_{n+1}^-).
    \end{align*}
    Now apply \Cref{prop:fiberwise_sum_increases_codimension} with $\Gamma_i^\pm = F_i^\pm$ to show that the above Lagrangian lives over tropical subvarieties of codimension at least $n+1$, hence vanishes.

\end{proof}
 
\section{Fourier transform}
\subsection{Fourier transforms in algebraic geometry}\label{sec:fourier}
The Fourier transform is a powerful tool relating the algebraic geometry of an abelian variety to that of its dual.
It was first introduced by Liebermann at the level of cohomology (see e.g. \cite{kleiman1968algebraic}); Mukai \cite{mukai1981duality} considered it as a transformation at the level of derived categories of coherent sheaves; and later Beauville studied it at the level of Chow groups \cite{beauville1986anneau,beauville2006quelques}.

The \emph{dual abelian variety of $A$} is an abelian variety parametrizing isomorphism classes of degree-zero line bundles on $A$.
It is often denoted by $A^\vee$.
The product $A \x A^\vee$ admits the following universal line bundle:
\begin{defn}\label{defn:poincarebundle}
    The \emph{Poincare bundle} on $A \x A^\vee$ is the line bundle 
    $$
    \mcal L_P = \mcal O(D_P) \to A \x A^\vee
    $$
    uniquely determined (up to isomorphism) by the following properties:
    \begin{enumerate}
        \item for any $w \in A^\vee$, the restriction of $\mcal L_P$ to $A \x \{w\}$ is a degree zero line bundle on $A$;
        \item the restriction of $\mcal L_P$ to $\{0\} \x A^\vee$ is trivial.
    \end{enumerate} 
\end{defn}
\begin{rmk}\label{rmk:Aveevee}
    The Poincare bundle gives an isomorphism $A \cong (A^\vee)^\vee$.
\end{rmk}

One then defines the \emph{Fourier transform} $\mcal F$ as the Fourier-Mukai transform associated to the kernel $\mcal L_P$, i.e.
\begin{align}\label{eq:algebraic_fourier}
    \mcal F := \Phi_{\mcal L_P}: D^bCoh(A) & \to D^bCoh(A^\vee)\\
    \mcal F(F) &:= (\pi_{A^\vee})_*((\pi_A)^*(F) \otimes \mcal L_P)
\end{align}
where $\pi_A,\pi_{A^\vee}$ are the projections of $A \x A^\vee$ to its two factors.
At the level of Chow groups, one can think of $\mcal F$ as 
\begin{align*}
    \mcal F_*: \CH^*(A) & \to \CH^*(A^\vee)\\
    \mcal F_*(Z) &:= (\pi_{A^\vee})_*((\pi_A)^*(Z) \cap D_P)
\end{align*}
where $\mcal L_P = \mcal O(D_P)$ and 
$$
\cap:\CH^*(A\x A^\vee)^{\otimes 2} \to \CH^*(A\x A^\vee)
$$
denotes the intersection product on cycles.

\subsection{Fourier transforms in symplectic geometry}\label{sec:symplecticfourier}
We now construct the mirror to $\mcal F$.
For this one must first understand what is the mirror to $A^\vee$.
Before proceeding formally let us make the following heuristic observation.
According to the SYZ philosophy, the mirror to a symplectic manifold $\pi:X \to B$ together with a given Lagrangian torus fibration (and a section) should be the complex space $Y = TB/T_\Z B$.
A pair 
$$
(F_b,\pi_1(F_b)\to S^1)
$$
of a Lagrangian torus fiber $F_b = \pi\inv(b)$ equipped with an $S^1$-local system can be thought of as an element in the group
$$
\Hom(\pi_1(F_b),S^1) \cong H^1(F_b;S^1) \cong T_{\Z,b} B \otimes S^1 \cong T_b B / T_{\Z,b} B
$$
which is a subset of $Y$.
That is: \emph{Lagrangian torus fibers equipped with local systems correspond to points in the mirror $Y$}.
Similarly, Lagrangian sections of $\pi$ should correspond to line bundles on $Y$.\footnote{To see this, recall there is an identification between $H^1(\Aff)$ and Hamiltonian isotopy classes of Lagrangian sections, see \Cref{eq:H1Affasquotient}.
It is a feature of the construction of the mirror space $\Pi: Y(B) \to B$ (see \cite[Section 2]{abouzaid2014family} that there is a map of sheaves of abelian groups $\Pi^*: \Aff \to \mcal O^*(Y(B))$, which sends an affine function $a + bx$ to a monomial $t^a z^b$. Thus there is a group homomorphism $H^1(B,\Aff) \to H^1(Y(B),\mcal O^*(Y(B))$. Given a Lagrangian section corresponding to a class in $H^1(\Aff)$, the corresponding line bundle on $Y(B)$ is the one classified by the image of this class under this map.} 
A homological mirror symmetry equivalence $\mcal Fuk(X) \simeq D^bCoh(Y)$ is then expected to map Lagrangian torus fibers to skyscraper sheaves of points, and Lagrangian sections to line bundles.

By definition, $A^\vee$ parametrizes isomorphism classes of degree zero line bundles on $A$.
The heuristics above then suggest that, if $\pi':X' \to B'$ is the mirror Lagrangian torus fibration to $A^\vee$, then its Lagrangian torus fibers should correspond to (Hamiltonian isotopy classes of) sections of $X \to B$ which are homologous to the zero-section (these are the sections corresponding to degree-zero line bundles).
Such sections are precisely constant sections.
Similarly, since $(A^\vee)^\vee \cong A$ (see \Cref{rmk:Aveevee}) we have that $A$ parametrizes isomorphism classes of degree-zero line bundles on $A^\vee$.
It follows that constant sections of $X' \to B'$ should correspond to Lagrangian torus fibers of $\pi:X \to B$.
It is then natural to propose
$$
\pi':X \to T_0^*B/T_{\Z,0}^*B
$$
as the homological mirror to $A^\vee$.

Let us make the above heuristic argument  precise.
We start with  a tropical affine torus $B = T^n$.
This corresponds to the data
$$
(V,\Lambda_1,\Lambda_2)
$$ 
of an $n$-dimensional real vector space $V$ together with two full-rank lattices $\Lambda_1, \Lambda_2 \subset V$.
Namely, to such data one associates the tropical affine torus\footnote{This is equivalent to the definition given in \Cref{ex:tropicalaffinetorus}: choosing a basis for  $\Lambda_2$ as a basis for $V$, we have an isomorphism $V \cong \R^n$ such that $\Lambda_2$ is identified with $\Z^n$.}
$$
(B,T_\Z B)  = (V/\Lambda_1, \Lambda_2).
$$
For the following definition we use the notation $V^\vee = \Hom_\R(V,\R)$ for the dual vector space and $\Lambda^\vee = \{\alpha \in V^\vee \mid \langle \alpha,\Lambda \rangle \subset \Z\}$ for the abelian group of covectors in $V^\vee$ that are integer-valued on $\Lambda$.
\begin{definition}\label{def:dualtropicalaffinetorus}
    Given a tropical affine torus $B = (V,\Lambda_1,\Lambda_2)$, the \emph{dual tropical affine torus} is $B^\vee := (V^\vee,\Lambda_2^\vee,\Lambda_1^\vee)$.
\end{definition}

Recall that, given a tropical affine torus $B \equiv (V,\Lambda_1,\Lambda_2)$, the associated symplectic torus $X(B)$ and complex torus $Y(B)_\C$ are the products
\begin{align}
    X(B) &= T^*B / T^*_\Z B = V/\Lambda_1 \x V^\vee/\Lambda_2^\vee,\quad \omega = \sum_i dq_i \wdg dp_i \label{eq:X(B)}\\
    Y(B)_\C &= TB/T_\Z B = V/\Lambda_1 \x V/\Lambda_2,\quad J(v_1,v_2) = (-v_2,v_1).\label{eq:Y(B)}
\end{align}
Here, $(q_i)$ are local coordinates on $B$ and $(p_i)$ are the dual coordinates on the fiber, while $J$ denotes the complex structure on $Y(B)_\C$ (whose tangent space we identify with $TY(B)_\C \simeq V \oplus V$).
These symplectic and complex manifolds are SYZ mirror pairs and, when passing to the rigid-analytic world, homologically mirror to each other \cite{kontsevich2001homological,fukaya2002mirror,abouzaid2021homological}.

\begin{rmk}\label{rmk:dualtorusisfiber}
    The dual tropical affine torus $B^\vee$ should be thought of as the fiber of the fibration $X(B) \to B$.
    Indeed, each fiber $F_b := T^*_b B /T^*_\Z B$ is the quotient of $T^*_bB \cong V^\vee$ by the lattice $T^*_{\Z,b} B \cong \Lambda_2^\vee$, which identifies $F_b \simeq V^\vee/\Lambda_2^\vee$.
    If $(q_i)$ are local affine coordinates on $B$ then one defines the dual coordinates $(p_i)$ as being affine coordinates on the fiber, and the corresponding lattice of integer vectors on the fiber is then canonically identified with $\Lambda_1^\vee$.
\end{rmk}

\begin{lem}\label{lem:SYZ_dual}
    The SYZ mirror pair $(X(B^\vee),Y(B^\vee)_\C)$ satisfies the following properties:
    \begin{enumerate}
        \item $X(B^\vee)$ is naturally symplectomorphic to $X(B)^-$;
        \item $Y(B^\vee)_\C\cong Y(B)^\vee_\C$.
        \item Let $Y(B)$ be the rigid-analytic space living over $B$, as defined by \cite{kontsevich2006affine}. When $B$ is polarized, $Y(B)$ is the analytification of an abelian variety, which we denote still by $Y(B)$. Then $Y(B)^\vee$ can be identified with $Y(B^\vee)$.
    \end{enumerate}
\end{lem}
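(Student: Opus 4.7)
The plan is to verify each of the three statements in turn, working from the explicit models (\ref{eq:X(B)}) and (\ref{eq:Y(B)}), using that the duality $B \mapsto B^\vee$ acts on the data by the substitution $(V, \Lambda_1, \Lambda_2) \leadsto (V^\vee, \Lambda_2^\vee, \Lambda_1^\vee)$.

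For part (1), I would define the swap map $\sigma : X(B) \to X(B^\vee)$ by $(q, p) \mapsto (p, q)$, making use of the canonical identifications $(V^\vee)^\vee \cong V$ and $(\Lambda_1^\vee)^\vee \cong \Lambda_1$ so that the target indeed lies in $V^\vee/\Lambda_2^\vee \times V/\Lambda_1 = X(B^\vee)$. A direct calculation of the pullback of the symplectic form gives $\sigma^* \omega_{X(B^\vee)} = \sum dp_i \wedge dq_i = - \omega_{X(B)}$, so $\sigma$ is a symplectomorphism $X(B) \to X(B^\vee)^-$, equivalently $X(B^\vee) \cong X(B)^-$.

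For part (2), I would apply the standard description of the dual of a complex torus: for $Y = W/\Lambda$ with complex structure $J$ on the real vector space $W$, the dual complex torus is $Y^\vee = W^*_\R / \Lambda^\vee$, equipped with complex structure $J^\vee := -J^T$ and lattice $\Lambda^\vee = \Hom_\Z(\Lambda, \Z)$. Applied to $Y(B)_\C = (V \oplus V)/(\Lambda_1 \oplus \Lambda_2)$ with $J(v_1, v_2) = (-v_2, v_1)$, a short computation yields $J^\vee(\alpha_1, \alpha_2) = (-\alpha_2, \alpha_1)$ on $V^\vee \oplus V^\vee$ with dual lattice $\Lambda_1^\vee \oplus \Lambda_2^\vee$. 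This has the same underlying complex vector space as $Y(B^\vee)_\C$ (whose lattice is $\Lambda_2^\vee \oplus \Lambda_1^\vee$), and the $\C$-linear automorphism $J^\vee$ itself sends one lattice to the other, furnishing the isomorphism $Y(B^\vee)_\C \cong Y(B)^\vee_\C$.

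For part (3) I expect the real work: translating the argument of (2) from the classical complex-analytic to the rigid-analytic and algebraic setting. I would first observe that Kontsevich--Soibelman's construction $B \mapsto Y(B)$ is functorial in $(V, \Lambda_1, \Lambda_2)$ and formally analogous to the complex construction, so the computation of (2) carries over with the Novikov field replacing $\C$ and produces a natural isomorphism between $Y(B^\vee)$ and the rigid-analytic Picard of $Y(B)$. When $B$ is polarized, a rigid-analytic GAGA principle identifies this analytic Picard with the algebraic dual abelian variety $Y(B)^\vee$, allowing the identification to descend from the analytic to the algebraic category.
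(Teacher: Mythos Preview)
Your arguments for (1) and (2) are correct and match the paper's approach; your treatment of (2) is in fact more careful than the paper's one-line computation, which writes $Y(B^\vee)_\C = V^\vee/\Lambda_2^\vee \times V^\vee/\Lambda_1^\vee \cong (V/\Lambda_1 \times V/\Lambda_2)^\vee$ and leaves the compatibility of complex structures implicit, whereas you verify it via $J^\vee = -J^T$ and the observation that $J^\vee$ itself carries one lattice to the other.

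For (3) the paper does not argue at all: it simply cites \cite[Proposition 4.7]{foster2018non}, where the statement is established for Berkovich analytic spaces. Your sketch --- functoriality of the Kontsevich--Soibelman construction together with rigid-analytic GAGA --- points in the right direction, but turning it into a proof requires genuinely identifying the rigid-analytic Picard variety of a Tate-uniformized abelian variety with the Kontsevich--Soibelman space over $B^\vee$, which is nontrivial and already done in the literature. The cleaner and more honest route is to cite that reference rather than gesture at a reproof.
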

\begin{proof}
    \begin{enumerate}
        \item Using \Cref{eq:X(B)} and \Cref{def:dualtropicalaffinetorus} we see that
        $$
        X(B^\vee) 
        = 
        V^\vee/\Lambda_2^\vee \x (V^\vee)^\vee/(\Lambda_1^\vee)^\vee 
        \cong
        V^\vee/\Lambda_2^\vee \x V/\Lambda_1.
$$
        Now note that the symplectic form on $X(B^\vee)$ is given by (see \Cref{eq:X(B)})
        $$
            \omega_{X(B^\vee)} = \sum_i dp_i \wdg dq_i = -\omega_{X(B)} = \omega_{X(B)^-}.
        $$
        It follows that the natural map swapping the factors
        $$
            X(B^\vee) \cong V^\vee/\Lambda_2^\vee \x V/\Lambda_1 \xra{\sim} V/\Lambda_1 \x V^\vee/\Lambda_2^\vee = X(B)^-
        $$
        is a symplectomorphism.
        \item From \Cref{eq:Y(B)} and \Cref{def:dualtropicalaffinetorus} we have
        $$
        Y(B^\vee)_\C = V^\vee/\Lambda_2^\vee \x V^\vee/\Lambda_1^\vee \cong (V/\Lambda_1 \x V/\Lambda_2)^\vee = Y(B)^\vee_\C. 
        $$
        \item This is equivalent to \cite[Proposition 4.7]{foster2018non}, where the authors show the statement for Berkovich analytic spaces.
    \end{enumerate}
\end{proof}

\Cref{lem:SYZ_dual} says that the mirror to $Y(B)^\vee$ is the \emph{same} manifold $X(B)$ but with  a \emph{different} Lagrangian torus fibration $X(B) \to V^\vee/\Lambda_2^\vee$ (and consequently the \emph{opposite} symplectic form $\omega_{X(B^\vee)} = -\omega_{X(B)}$).
The homological mirror symmetry functor\footnote{We are being slightly imprecise here. The homological mirror symmetry functor constructed by Abouzaid \cite{abouzaid2014family,abouzaid2017family,abouzaid2021homological} relates the Fukaya category $\mcal Fuk(X(B))$ to the derived category $D^bCoh^{an}(Y)$ of coherent \emph{analytic} sheaves on a rigid-analytic mirror $Y$ defined over the Novikov field. We are instead working with a (projective) algebraic torus $Y(B)$ and the category $D^bCoh(Y(B))$ of coherent algebraic sheaves on $Y(B)$. The passage between analytic and algebraic sheaves poses no problem due to the rigid-analytic GAGA principle: for a rigid-analytic $Y = Y(B)^{an}$ which is the analytification of a projective variety $Y(B)$, one has that $D^bCoh^{an}(Y) \simeq D^bCoh(Y(B))$ \cite{fresnel2012rigid}.}
$$
\Phi_B: \mcal Fuk(X(B)) \to D^bCoh(Y(B))
$$
sends fibers (respectively, Lagrangian sections) of $X(B) \to B$ to skyscraper sheaves of points (respectively, line bundles) on $Y(B)$.
\begin{definition}\label{defn:symplecticfourier}
    The \emph{(symplectic) Fourier transform} $\mathfrak F$ is the equivalence 
$$
    \begin{tikzcd}
        \mcal Fuk(X(B)) \arrow[r,dashed,"\mathfrak F"] \arrow[d,"\sim","\Phi_B"'] & \mcal Fuk(X(B)^\vee) \\
        D^bCoh(Y(B)) \arrow[r,"\sim","\mcal F"'] & D^bCoh(Y(B)^\vee) \arrow[u,"\sim","\Phi_{B^\vee}\inv"']
    \end{tikzcd}
    $$
\end{definition}

We will now obtain a purely symplectic description of $\mathfrak F$.
Recall that its algebraic counterpart  is the equivalence 
$$
\mcal F = \Psi_{\mcal L_P}: D^bCoh(Y) \to D^bCoh(Y^\vee)
$$
induced by the Fourier-Mukai kernel $\mathcal L_P \in D^bCoh(Y \x Y^\vee)$ given by the Poincare bundle (see \Cref{eq:algebraic_fourier}).
The two properties defining $\mathcal L_P$ (see \Cref{defn:poincarebundle}) tell us that, if
$$
    D^bCoh(Y) \xra{\Psi_{\mcal L_P, Y \to Y^\vee}} D^bCoh(Y^\vee), \quad D^bCoh(Y^\vee) \xra{\Psi_{\mcal L_P, Y^\vee \to Y}} D^bCoh(Y)
$$
are the functors obtained from $\mcal L_P$, then
$$
    \Psi_{\mcal L_P, Y^\vee \to Y}(\mcal O_a) = a, \quad \Psi_{\mcal L_P, Y \to Y^\vee}(\mcal O_0) = \mcal O_{Y^\vee}.
$$
Recalling from \cite{mukai1981duality} that 
$$
\Psi_{\mcal L_P, Y^\vee \to Y} \circ \Psi_{\mcal L_P, Y \to Y^\vee}  = (-1_Y)^*[g]
$$
where $-1_Y:Y \to Y $ is the inversion map $(-1_Y)(y) = -y$ and $g$ is the dimension of $Y$, we see that $\Psi_{\mcal L_P, Y \to Y^\vee}$ is such that 
\begin{align}
        \Psi_{\mcal L_P, Y^\vee \to Y}(\mcal O_a) &= a \label{eq:properties_FM_sky}\\
        \Psi_{\mcal L_P, Y^\vee \to Y}(\mcal O_{Y^\vee}) &= \mcal O_0[g] \label{eq:properties_FM_str}.
\end{align}

The following result implies that the action of a functor on skycraper sheaves characterizes it uniquely up to tensoring with a line bundle:
\begin{prop}\label{prop:action_on_sky_characterizes}
    Let $Y_1,Y_2$ be smooth projective varieties and $\Phi: D^bCoh(Y_1) \to D^bCoh(Y_2)$ an equivalence.
    If $\Phi$ sends skyscraper sheaves of points to skyscraper sheaves of points (i.e. $\Phi(\mcal O_y) = \mcal O_{f(y)}$ for some $f: Y_1 \to Y_2$) then:
    \begin{enumerate}
        \item $f$ is an isomorphism;
        \item $\Phi = L \otimes f_*(-)$ for some line bundle $L \in \Pic(Y)$.
    \end{enumerate}
\end{prop}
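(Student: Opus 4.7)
The plan is to reduce the statement to Orlov's representability theorem plus a careful analysis of how the skyscraper condition constrains a Fourier--Mukai kernel. Orlov's theorem says that any equivalence $\Phi: D^bCoh(Y_1) \to D^bCoh(Y_2)$ between bounded derived categories of smooth projective varieties is isomorphic to a Fourier--Mukai transform $\Phi_{\mcal K}(-) = R(p_2)_*(\mcal K \otimes^{L} p_1^*(-))$ for a unique kernel $\mcal K \in D^bCoh(Y_1 \x Y_2)$, and the entire proof will consist of extracting structural information about $\mcal K$ from the hypothesis.

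First I would compute $\Phi_{\mcal K}(\mcal O_y) \simeq L(i_y)^* \mcal K$, where $i_y: \{y\} \x Y_2 \into Y_1 \x Y_2$, so that the hypothesis reads $L(i_y)^* \mcal K \simeq \mcal O_{f(y)}$ for every $y \in Y_1$. Since each such derived restriction is a length-one skyscraper concentrated in degree zero, a cohomology-and-base-change argument would force $\mcal K$ to be a coherent sheaf (not a genuine complex) whose scheme-theoretic support $Z \subset Y_1 \x Y_2$ is finite of degree one over $Y_1$. Any such $Z$ is the graph $\Gamma_f$ of a morphism $f: Y_1 \to Y_2$, and $p_1|_{\Gamma_f}: \Gamma_f \xra{\sim} Y_1$ is an isomorphism. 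Writing $\mcal K = (i_{\Gamma_f})_* \mcal L$, the fact that every fiber has length one forces $\mcal L$ to be a line bundle on $\Gamma_f$.

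Plugging this back in, the projection formula gives
$$
\Phi_{\mcal K}(E) \simeq (p_2 \circ i_{\Gamma_f})_*(\mcal L \otimes (p_1|_{\Gamma_f})^* E) \simeq f_*(\mcal L' \otimes E),
$$
where $\mcal L' \in \Pic(Y_1)$ corresponds to $\mcal L$ under $p_1|_{\Gamma_f}$. To see that $f$ is an isomorphism I would apply the same argument to $\Phi\inv$, producing a morphism $g: Y_2 \to Y_1$, and then evaluate $\Phi\inv \circ \Phi \simeq \id$ on skyscrapers to obtain $g \circ f = \id_{Y_1}$ and symmetrically $f \circ g = \id_{Y_2}$. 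Setting $L := f_* \mcal L' \in \Pic(Y_2)$ and using that $f_*$ is strong monoidal when $f$ is an isomorphism yields $\Phi(E) \simeq L \otimes f_*(E)$.

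The main obstacle is the second step: a priori the kernel $\mcal K$ could be a nontrivial complex whose individual cohomology sheaves have larger support, arranged so that the derived restriction to each fiber miraculously collapses to a length-one skyscraper. Ruling out this pathology is where the skyscraper hypothesis is used beyond purely set-theoretic information, and it requires combining semicontinuity of fiber dimension with the fact that the length of each $L(i_y)^*\mcal K$ is the constant $1$; only then is one entitled to conclude that $\mcal K$ is a line bundle pushed forward from a degree-one section of $p_1$.
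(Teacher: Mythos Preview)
Your argument is correct and is precisely the standard proof of this fact; the paper itself gives no proof at all but simply cites \cite[Corollary 5.23]{huybrechts2006fourier}, whose argument is the one you have outlined (Orlov representability, then the flatness/base-change lemma forcing the kernel to be a line bundle on a graph).
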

\begin{proof}
    See \cite[Corollary 5.23]{huybrechts2006fourier}.
\end{proof}

In particular, we have:
\begin{lem}\label{lem:properties_Poincare_characterize}
    The two properties in \Cref{eq:properties_FM_sky,eq:properties_FM_str} characterize $\Psi_{\mcal L_P, Y^\vee \to Y}$ (and hence $\Psi_{\mcal L_P, Y \to Y^\vee}$) uniquely.
\end{lem}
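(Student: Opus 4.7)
The plan is to suppose that $\Phi_1, \Phi_2 : D^bCoh(Y^\vee) \to D^bCoh(Y)$ are any two equivalences both satisfying (\ref{eq:properties_FM_sky}) and (\ref{eq:properties_FM_str}) and to show $\Phi_1 \cong \Phi_2$ by reducing to \Cref{prop:action_on_sky_characterizes}. Uniqueness of $\Psi_{\mcal L_P, Y \to Y^\vee}$ will then follow from that of $\Psi_{\mcal L_P, Y^\vee \to Y}$ via the Mukai inversion $\Psi_{\mcal L_P, Y^\vee \to Y} \circ \Psi_{\mcal L_P, Y \to Y^\vee} = (-1_Y)^*[g]$ recalled just above, which expresses each of these functors in terms of the other up to an explicit autoequivalence.

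First I would form the autoequivalence $\Phi_2^{-1} \circ \Phi_1$ of $D^bCoh(Y^\vee)$. Since both $\Phi_i$ send the skyscraper $\mcal O_a$ to the same line bundle $a$ on $Y$ by (\ref{eq:properties_FM_sky}), this autoequivalence fixes every skyscraper, i.e. $(\Phi_2^{-1} \circ \Phi_1)(\mcal O_a) \cong \mcal O_a$ for all $a \in Y^\vee$. Taking $f = \id_{Y^\vee}$ in \Cref{prop:action_on_sky_characterizes} then produces a line bundle $L \in \Pic(Y^\vee)$ with $\Phi_2^{-1} \circ \Phi_1 \cong L \otimes (-)$, equivalently $\Phi_1 \cong \Phi_2 \circ (L \otimes -)$.

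To pin down $L$ I would evaluate both sides at the structure sheaf $\mcal O_{Y^\vee}$ and use (\ref{eq:properties_FM_str}) for each of $\Phi_1$ and $\Phi_2$:
\[
    \Phi_2(L) \;\cong\; \Phi_1(\mcal O_{Y^\vee}) \;\cong\; \mcal O_0[g] \;\cong\; \Phi_2(\mcal O_{Y^\vee}).
\]
Since $\Phi_2$ is an equivalence it reflects isomorphisms, so $L \cong \mcal O_{Y^\vee}$ and therefore $\Phi_1 \cong \Phi_2$.

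I do not foresee any serious obstacle; the content is essentially a direct application of \Cref{prop:action_on_sky_characterizes}, with (\ref{eq:properties_FM_str}) playing the sole role of eliminating the residual twist by a line bundle. The only modelling choice worth flagging is to compose in the order $\Phi_2^{-1} \circ \Phi_1$ rather than the reverse, so that the resulting autoequivalence lives on $D^bCoh(Y^\vee)$, where its action on skyscrapers is directly prescribed by (\ref{eq:properties_FM_sky}) and \Cref{prop:action_on_sky_characterizes} applies verbatim.
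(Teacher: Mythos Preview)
Your proposal is correct and follows essentially the same approach as the paper: form the autoequivalence $\Phi_2^{-1}\circ\Phi_1$ of $D^bCoh(Y^\vee)$, use \Cref{prop:action_on_sky_characterizes} together with \Cref{eq:properties_FM_sky} to identify it with $L\otimes(-)$, and then use \Cref{eq:properties_FM_str} to force $L\cong\mcal O_{Y^\vee}$. The only cosmetic difference is that the paper writes $\chi(\mcal O_{Y^\vee})=L\otimes\mcal O_{Y^\vee}=L$ directly, whereas you push through $\Phi_2$ and invoke that equivalences reflect isomorphisms; these are the same computation.
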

\begin{proof}
    Suppose that $\Psi := \Psi_{\mcal L_P, Y^\vee \to Y}$ and $\tilde\Psi$ both satisfy \Cref{eq:properties_FM_sky,eq:properties_FM_str}. 
    Then 
    $$
    \chi:= \tilde\Psi\inv \circ \Psi: D^bCoh(Y) \to D^bCoh(Y)
    $$
    is an autoequivalence of $D^bCoh(Y)$ such that 
    \begin{align}
        \chi(\mcal O_a) &= \mcal O_a \label{eq:sky_to_sky}\\
        \chi(\mcal O_{Y^\vee}) &= \mcal O_{Y^\vee} \label{eq:str_to_str}.
    \end{align}
    It follows from \Cref{eq:sky_to_sky} and \Cref{prop:action_on_sky_characterizes} that $\chi = L \otimes (-)$ for some line bundle $L \in \Pic(Y)$.
    Using \Cref{eq:str_to_str} we get
    $$
    \mcal O_{Y^\vee} = \chi(\mcal O_{Y^\vee}) = L\otimes  \mcal O_{Y^\vee} = L
    $$
    concluding that $\chi =\id$.
\end{proof}

Let us first forget about gradings.
Then, using homological mirror symmetry \cite{abouzaid2021homological} for $Y=Y(B)$ and $X(B)$ (resp. $Y^\vee$ and $X(B^\vee)$)  together with \Cref{lem:properties_Poincare_characterize}, this means that any equivalence $\mathfrak F:\mcal Fuk(X(B)) \to \mcal Fuk(X(B^\vee))$ such that
\begin{align}
    \mathfrak F(F_0) &= F_0\\
    \mathfrak F(\Gamma_\alpha) &= \Gamma_{-\alpha} \label{eq:sections_shift}
\end{align}
will be unique and will be the mirror to the Fourier-Mukai transform $\mcal F$.
Here we denoted by $F_0$ the fiber of $X(B) \to B$ over $0 \in B$, and by $\Gamma_\alpha$ the flat Lagrangian section corresponding to the constant $1$-form $\alpha\in H^0(T^*B)$---these are the mirror Lagrangians to the skyscraper sheaf of the origin $\mcal O_0$ and degree-zero line bundles $a \in \Pic^0(Y)$, respectively.

Let $\iota: X(B) \to X(B)^\vee$ be the symplectomorphism 
\begin{align}\label{eq:iota}
    \begin{split}
        \iota:X(B) &\to X(B)^\vee\\
        (q_B,p_B) &\mapsto (q_{B^\vee}, p_{B^\vee}) =  (-p_B,q_B).
    \end{split}
\end{align}
It is then clear that $\iota(F_0) = F_0$ and $\iota(\Gamma_\alpha) = \Gamma_{-\alpha}$, so that $\mathfrak F$ is the functor induced by $\iota$ (up to gradings).

To understand the shift by $g$ in \Cref{eq:sections_shift} we must enhance our Lagrangians to \emph{graded} Lagrangians and turn $\iota$ into a \emph{graded} symplectomorphism, cf. \cite{seidel2000graded}.
Namely, note that the quadratic volume forms used to define gradings in $X(B)$ and $X(B^\vee)$ are, respectively \cite{abouzaid2014family}
\begin{align*}
    \Omega^2 &= (\wedge_j (dq_j + i dp_j))^{\otimes 2}\\
    (\Omega^\vee)^2 &= (\wedge_j (dp_j + idq_j))^{\otimes 2}.
\end{align*}
The map $\iota$ of \Cref{eq:iota} does \emph{not} preserve the quadratic volume forms, but rather 
$$
\iota_*\Omega^2 = (\wedge_j (dq_j - i dp_j))^{\otimes 2} = (-i)^{2g} (\Omega^\vee)^2.
$$
Thus, if $W \subset T_pX(B)$ is a graded Lagrangian subspace with grading $t \in \R$ (with respect to $\Omega^2_p$), then the Lagrangian subspace $\iota_*(W) \subset T_{\iota(p)}X(B^\vee)$ can be graded by $t-g/2$ (with respect to $(\Omega^\vee)^2_{\iota(p)}$).
This yields a graded symplectomorphism such that, for a graded Lagrangian  $(L, \tilde\theta_L : L \to \R)$,\footnote{Recall this means that $\exp(2\pi i \tilde\theta_L) = \theta_L$, where $\theta_L(p) = \Omega^2(vol^2_p) \in S^1$ is the squared phase map.} we have 
\begin{align}\label{eq:gradediota}
    \begin{split}
        \varphi_\iota: \mcal Fuk(X(B)) &\to \mcal Fuk(X(B^\vee))\\
        (L, \tilde \theta_L) &\mapsto (\iota(L), \iota_*\tilde\theta_L - g/2),
    \end{split}
\end{align}
where $\iota_*\tilde\theta_L = \tilde\theta_L \circ \iota\inv$.

The family Floer functor $\mcal Fuk(X(B)) \to D^bCoh(Y)$ sends graded Lagrangians to (complexes of) coherent sheaves on $Y(B)$.
By construction, one has that:
\begin{itemize}
    \item The mirror to degree-zero line bundles $a \in \Pic^0(Y)$ are flat Lagrangian sections $\Gamma_\alpha$ equipped with the constant grading $\tilde\theta_{\Gamma_\alpha} \equiv 0$ (note that for these sections the squared phase map $\theta_{\Gamma_\alpha}$ is constant and equal to $1 \in S^1$, hence the above grading makes sense as a constant).
    \item The mirror to skyscraper sheaves of points $\mcal O_y$ are Lagrangian torus fibers $F_b$ (equipped with local systems) and with grading $\tilde\theta_{F_b} \equiv g/2$ (note that fibers also have constant squared phase map $\theta_{F_b} \equiv e^{ig\pi}$).
\end{itemize}
Equipping fibers and sections with this grading, the graded symplectomorphism $\iota$ of \Cref{eq:gradediota} sends
\begin{equation}\label{eq:sympl_fourier_on_fibers}
    (F_0, \tilde\theta_{F_0} \equiv g/2) \mapsto (F_0, 0)
\end{equation}
and $(F_0, 0)$ coincides with the $0$-section of $X(B^\vee)$ with its standard grading $\tilde\theta^\vee_{\Gamma_0} \equiv 0$.
Similarly, we have that
\begin{equation}\label{eq:sympl_fourier_on_sections}
(\Gamma_\alpha, \tilde\theta_{\Gamma_\alpha} \equiv 0) \mapsto (\Gamma_{-\alpha}, -g/2) = (\Gamma_{-\alpha}, g/2)[g]
\end{equation}
and $(\Gamma_{-\alpha}, g/2)$ is nothing but a fiber of $X(B^\vee) \to B^\vee$ with its standard grading.
Note that \Cref{eq:sympl_fourier_on_fibers,eq:sympl_fourier_on_sections} are precisely the mirror (dual) equations to \Cref{eq:properties_FM_sky,eq:properties_FM_str}. 
By \Cref{lem:properties_Poincare_characterize} and homological mirror symmetry \cite{abouzaid2021homological} the action on these objects fully specifies the functor, hence:

\begin{thm}\label{prop:symplecticfourier}
    The symplectic Fourier transform 
    $$
    \mathfrak F: \mcal Fuk(X(B)) \to \mcal Fuk(X(B^\vee))
    $$
    defined in \Cref{defn:symplecticfourier} coincides with the functor $\varphi_\iota$ induced by the graded symplectomorphism $\iota$ of \Cref{eq:gradediota}.
\end{thm}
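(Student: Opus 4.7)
The plan is to reduce the equality of two functors $\mcal Fuk(X(B)) \to \mcal Fuk(X(B^\vee))$ to a check on a small collection of distinguished objects, namely fibers over the origin and flat sections, using the uniqueness statement already established in \Cref{lem:properties_Poincare_characterize}. Unwinding \Cref{defn:symplecticfourier}, $\mathfrak F$ is the unique functor making the diagram with Abouzaid's mirror equivalences $\Phi_B,\Phi_{B^\vee}$ commute; so to identify $\mathfrak F$ with $\varphi_\iota$ it suffices to show that $\varphi_\iota$ satisfies the mirror analogues of the two characterizing properties \Cref{eq:properties_FM_sky,eq:properties_FM_str} of the Fourier--Mukai kernel.

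Concretely, I would proceed as follows. First, recall from Abouzaid's family Floer construction of $\Phi_B$ (and symmetrically for $\Phi_{B^\vee}$) that skyscraper sheaves of points on $Y(B)$ correspond to Lagrangian torus fibers $F_b$ of $\pi: X(B) \to B$ equipped with flat $U(1)$-local systems, while degree-zero line bundles correspond to flat Lagrangian sections $\Gamma_\alpha$; under a grading convention consistent with the quadratic volume forms $\Omega^2, (\Omega^\vee)^2$ spelled out above, the gradings are $\tilde\theta_{F_b}\equiv g/2$ and $\tilde\theta_{\Gamma_\alpha}\equiv 0$ respectively. Second, compute directly from the formula $\iota(q,p)=(-p,q)$ that $\iota$ sends the fiber $F_0$ to the zero-section of $X(B^\vee)$ and the flat section $\Gamma_\alpha$ to the fiber $F_{-\alpha}$ of $X(B^\vee)\to B^\vee$, using the identifications of \Cref{rmk:dualtorusisfiber}.

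Third, upgrade $\iota$ to a graded symplectomorphism as in \Cref{eq:gradediota}: the computation $\iota_*\Omega^2 = (-i)^{2g}(\Omega^\vee)^2$ forces the constant grading shift by $-g/2$. Tracking gradings through, one obtains exactly \Cref{eq:sympl_fourier_on_fibers,eq:sympl_fourier_on_sections}, which are the mirror translations of \Cref{eq:properties_FM_sky,eq:properties_FM_str}. Finally, since both $\mathfrak F$ and $\varphi_\iota$ induce (via $\Phi_B,\Phi_{B^\vee}$) equivalences on the algebraic side sending skyscrapers to skyscrapers in the prescribed way and having the correct action on the structure sheaf, \Cref{lem:properties_Poincare_characterize} forces them to coincide.

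The main obstacle I anticipate is the careful bookkeeping of gradings. Producing the right shift by $g$ in \Cref{eq:sections_shift} requires both the intrinsic grading of fibers as $g/2$ (since they have constant squared phase $e^{ig\pi}$) and the $-g/2$ shift coming from $\iota_*\Omega^2 = (-i)^{2g}(\Omega^\vee)^2$; these two contributions must combine precisely so that a flat section with grading $0$ is mapped to a fiber of $X(B^\vee)\to B^\vee$ with its standard grading, up to a degree shift $[g]$. The geometric identification of $\iota(F_0)$ and $\iota(\Gamma_\alpha)$ is immediate; the content lies in checking that no extra twist or auto-equivalence of the target Fukaya category (such as tensoring by a nontrivial flat line bundle) intervenes, which is exactly what \Cref{prop:action_on_sky_characterizes} and \Cref{lem:properties_Poincare_characterize} rule out once the grading normalizations are pinned down.
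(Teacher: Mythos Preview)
Your proposal is correct and follows essentially the same route as the paper: reduce to the characterizing properties \Cref{eq:properties_FM_sky,eq:properties_FM_str} via \Cref{lem:properties_Poincare_characterize}, check the ungraded action of $\iota$ on $F_0$ and $\Gamma_\alpha$, then use the computation $\iota_*\Omega^2=(-i)^{2g}(\Omega^\vee)^2$ to upgrade to the graded statement and recover the shift by $g$. Your anticipated obstacle (the grading bookkeeping combining the intrinsic $g/2$ on fibers with the $-g/2$ shift from $\iota$) is exactly where the paper's argument does its only real work, and you have identified it correctly.
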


\section{Ring structures}
\subsection{Pontryagin product}\label{sec:pontryagin}
Let $\mathbf k$ be a field of characteristic zero and 
$$
\Lambda = \left\lbrace \sum_{i\geq0} a_i T^{\lambda_i} \mid a_i \in \mathbf{k},\quad \lim_{i\to\infty} \lambda_i =+\infty\right\rbrace
$$
denote the Novikov field.
Let $\log : \Lambda^\x \to \R$ be the log-map
$$
    \log\left(\sum_{i\geq0} a_i T^{\lambda_i}\right) = \min\lbrace \lambda_i \mid a_i \neq 0\rbrace.
$$
Here $\Lambda^\x \subset \Lambda$ is the multiplicative group of units.
We also write $\log: (\Lambda^\x)^n \to \R^n$ for the component-wise log map.

Let $L \subset (\Lambda^\x)^n$ be a rank-$n$ torsion-free subgroup of $(\Lambda^\x)^n$ such that $\log(L) \subset \R^n$ is a rank-$n$ lattice. 
The quotient 
\begin{equation}\label{eq:Aisquotient}
    A = (\Lambda^\x)^n / L
\end{equation}
is, when polarized, an \emph{abelian variety} of dimension $n$.
The projection
$$
\log : A \to \R^n/\log(L) \cong T^n
$$
gives $A$ the structure of a non-Archimedean torus fibration over $T^n$, c.f. \cite{nicaise2019non}.

The presentation in \Cref{eq:Aisquotient} gives $A$ the structure of an abelian group, where the operation is that induced from multiplication in $(\Lambda^\x)^n$.
We denote by $\mu: A \x A \to A$ the product structure on $A$.
The induced map on Chow groups (c.f. \Cref{eq:CH_pontryagin})
\begin{align}\label{eq:pontryaginproductalg}
    \begin{split}
        \star:\CH_p(A) \otimes \CH_q(A) &\to \CH_{p+q}(A)\\
        Z_1 \star Z_2 &:= \mu_*(Z_1 \x Z_2)
    \end{split}
\end{align}
is called the \emph{Pontryagin product}.
It turns $\CH_*(A)$ into a ring, and $\CH_0(A) = \CH^n(A)$ is a subring for $\star$.
\begin{rmk}\label{rmk:compatiblegroupstructures}
    There is also a group structure $m:T^n \x T^n \to T^n$  on the base coming from the group structure of $\R^n$, and it is compatible with $\mu$ in the sense that $m(\log(f_1),\log(f_2)) = \log(\mu(f_1,f_2))$.
    Furtheremore, the inclusion
    \begin{align*}
        T^n &\to A \\
        (\lambda_1,\dots,\lambda_n) &\mapsto \left(T^{\lambda_1},\dots,T^{\lambda_n}\right)
    \end{align*}
    is a group homomorphism.
\end{rmk}

To make a connection with symplectic geometry let us now focus on the subring $\CH_0(A)$.
This should be mirror to the fibered cobordism group $\Cobfib(X)\subset \Cob(X)$ of the  mirror symplectic torus $X = X(B)$.
To define the analogue operation on the symplectic side,  we to consider the abelian structure $m:B \x B \to B$ of the base $B \simeq T^n$.

\begin{lem}\label{lem:pontryagin_on_cob}
    Let $B = T^n$ be the base of the Lagrangian torus fibration $\pi: X(B)\to B$.
    The operation on fibers
    \begin{align*}
        \star:\Z^B \x \Z^B &\to \Z^B\\
        F_p \star F_q &:= F_{m(p,q)}
    \end{align*}
    descends to a group homomorphism $\Cobfib(X) \otimes \Cobfib(X) \to \Cobfib(X)$.
\end{lem}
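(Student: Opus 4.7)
The plan is to realise the operation $(-)\star F_q$ as the map on $\Cobfib(X)$ induced by translation in the base direction, which is a graded symplectomorphism preserving all the data used to define our cobordism group. Specifically, the group law $m:B\times B\to B$ lifts to a family of diffeomorphisms $\tau_b:X\to X$ (for $b\in B$) defined in local coordinates by $\tau_b(q,p)=(m(q,b),p)$, where $q$ is the base coordinate and $p$ the cotangent-fibre coordinate. Since $\tau_b$ preserves both the symplectic form $\omega=\sum_i dq_i\wedge dp_i$ and the quadratic volume form $\Omega^2=(\wedge_j(dq_j+i\,dp_j))^{\otimes 2}$ used to define gradings on $X(B)$, it is a graded symplectomorphism, and it clearly also preserves the canonical orientation and Spin structure of the torus.

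The key observation is then that $\tau_b(F_{p'})=F_{m(p',b)}$ for every base point $p'\in B$, and that $\tau_b\times\mathrm{id}_\mathbb{C}:X\times\mathbb{C}\to X\times\mathbb{C}$ takes any Lagrangian cobordism in $\mcal L_{cob}$ to another such cobordism, since weak exactness, orientation, and Spin structure are all preserved by diffeomorphisms of this form. Consequently, any cobordism $V\in\mcal L_{cob}$ between tuples of fibres $(F_{p_0^-},\dots,F_{p_{k_-}^-})$ and $(F_{p_0^+},\dots,F_{p_{k_+}^+})$ produces, via $\tau_b\times\mathrm{id}_\mathbb{C}$, a cobordism between the translated tuples $(F_{m(p_0^-,b)},\dots)$ and $(F_{m(p_0^+,b)},\dots)$.

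It follows that any defining relation $\sum_i F_{p_i^-}=\sum_j F_{p_j^+}$ in $\Cobfib(X)$ yields the translated relation $\sum_i F_{p_i^-}\star F_b=\sum_j F_{p_j^+}\star F_b$, so $(-)\star F_b$ descends to a well-defined group endomorphism of $\Cobfib(X)$. Since $m$ is commutative, $\star$ is symmetric on generators ($F_p\star F_q=F_q\star F_p$), so the analogous argument gives linearity in the second slot. Combining, one obtains a bilinear pairing $\Cobfib(X)\times\Cobfib(X)\to\Cobfib(X)$, equivalently the desired group homomorphism from the tensor product.

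The only real content of the argument is the observation that $\tau_b$ is a graded symplectomorphism preserving all decorations; once this is checked, everything else is a short book-keeping exercise. I do not anticipate a serious obstacle: stability of the weakly-exact, oriented, and Spin conditions under $\tau_b\times\mathrm{id}_\mathbb{C}$ is immediate from the fact that $\tau_b$ is a symplectic diffeomorphism of the torus that fixes all the relevant topological data.
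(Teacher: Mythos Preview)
Your proof is correct and follows essentially the same approach as the paper: both observe that $(-)\star F_b$ is induced by the translation symplectomorphism $t_b:(q,p)\mapsto(q+b,p)$, which therefore descends to $\Cobfib(X)$. You are slightly more explicit than the paper in checking that $t_b$ preserves gradings, orientations and Spin structures, and in invoking commutativity of $m$ to handle the second slot, but the core idea is identical.
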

\begin{proof}
    Consider tuples of points $(p_i^\pm) \in B$ such that $\sum (F_{p_i^+} - F_{p_i^-}) = 0$ in $\Cobfib(X)$.
    We must show that 
    $$
        \sum (F_{m(p_i^+,q)} - F_{m(p_i^-,q)}) = 0
    $$
    in $\Cobfib(X)$.
    This follows from the observation that, if $t_b: T^{2n} \to T^{2n}$ is the symplectomorphism $(q,p) \mapsto (q+b,p)$, then  $F_{m(p_i^+,q)} = (t_q)_* F_{p_i^+}$. 
    Here, $(t_q)_*: \Cob(X) \to \Cob(X)$ denotes the induced map on cobordism groups (which is well-defined because $t_q$ is a symplectomorphism) which restricts to a map $(t_q)_*: \Cobfib(X) \to \Cobfib(X)$.
    One then has 
    $$
    \sum (F_{m(p_i^+,q)} - F_{m(p_i^-,q)}) = \sum (t_q)_* (F_{p_i^+} - F_{p_i^-}) = 0
    $$
    which proves the Lemma.
\end{proof}

\begin{defn}
    The homomorphism $\star: \Cobfib(X) \otimes \Cobfib(X) \to \Cobfib(X)$ is called the \emph{Pontryagin product}.
\end{defn}

\begin{rmk}
    The above operation can be enhanced to include gradings, local systems and all the data of \Cref{defn:lagcobgroup_thispaper}.
    For instance, for local systems $\xi_{b}, \xi_{b'}$ on  $F_b$ and $F_{b'}$, one defines 
    $$
        (F_b, \xi_b) \star (F_{b'}, \xi_{b'}) = (F_{m(b,b')}, (t_{b'})_* \xi_b \otimes (t_{b})_*\xi_{b'}).
    $$
    The reason we don't need to deal with this is \Cref{prop:exchangeoncob}: there we show that the Pontryagin product can be rephrased as fiberwise addition using a different Lagrangian torus fibration.
    This is a particular instance of geometric composition of Lagrangian correspondences, and the transformation of local systems, Spin structures and gradings under geometric composition has already been developed in the literature \cite{wehrheim2010quilted,subotic,wehrheim2015orientations}.
\end{rmk}

\begin{remark}
    The Pontryagin product on the algebraic side is defined on the whole Chow group (see \Cref{eq:pontryaginproductalg}), whereas in the symplectic side we have only given a definition for the fibered cobordism group.
    We show in  \Cref{sec:exchange} that the symplectic Pontryagin product is the same as the fiberwise addition operation when we equip $X(B)$ with the Lagrangian torus fibration $X(B) \to T_0^*T^n/T_{\Z,0}^*T^n$ given by projecting to the fibers.
    The operation of fiberwise addition was studied by Subotic in his thesis \cite{subotic}, where he showed it is a well-defined operation on some enlarged version of the Fukaya category consisting of generalized Lagrangian correspondences.
    To get an operation between non-enlarged Fukaya categories one must deal with geometric composition of Lagrangian correspondences---and it is often the case that this yields immersed Lagrangians.
    However, work of Fukaya \cite{fukaya2017unobstructed} shows that, even if immersed, geometric composition of unobstructued Lagrangian correspondences produces unobstructed (immersed) Lagrangians.\footnote{For an embedded compact Lagrangian $L$, unobstructedness means there exists a \emph{bounding cochain} $b \in CF^1(L,L)$ satisfying the Maurer-Cartan equation 
    $$
    \sum_k m^k(b,\dots,b) = 0.
    $$
    Here $\{m^k\}$ are the $A_\infty$-operations of the Fukaya category. 
    This yields a deformed $A_\infty$-algebra $(CF(V,V),\{m^k_b\}_{k\in\N})$---as defined e.g. in \cite{fukaya2009lagrangian}---with $m_b^0 = 0$.
    For the general case of immersed Lagrangians we refer the reader to \cite{akaho2010immersed}.}
    Thus, if one is willing to enlarge the definition of the Lagrangian cobordism group to include unobstructed immersed Lagrangians, then the Pontryagin product operation can be defined on the whole cobordism group.
\end{remark}

Let us recall that in \Cref{assumption:graded_spin,assumption:generators,assumption:cobordisms} we considered a setting where all Lagrangians are oriented and Spin, all generators of the cobordism group are embedded and weakly-exact, and all cobordisms are immersed.
As mentioned in the above remark, Fukaya has shown that, at least in the compact case, geometric composition of immersed unobstructed Lagrangians is again unobstructed \cite{fukaya2017unobstructed}.
Unobstructedness of  Lagrangian cobordisms (where one has to deal with non-compactness) has been been studied by Hicks \cite{hicks2019wall}.
We expect that the results of Fukaya can be modified to Lagrangian cobordisms, so that \emph{geometric composition of immersed unobstructed Lagrangian cobordisms is again unobstructed}.
This has not appeared in the literature and is not the purpose of this paper either, but we assume it to be true for the following result.

Namely, a stronger condition to unobstructedness is \emph{tautological unobstructedness} (also known as `quasi-exactness'): this means there exists an $\omega$-compatible almost complex structure such that the Lagrangian bounds no non-constant holomorphic disks (nor teardrops in the immersed case).
When the ambient symplectic manifold is \emph{aspherical} (i.e. $\omega(\pi_2(X))=0$, which is the case for our symplectic torus $X = X(B)$), Rathel-Fournier \cite[Theorem A]{rathel2023unobstructed} extended work of Biran-Cornea-Shelukhin \cite{biran2021lagrangian} and showed that tautologically unobstructed immersed cobordisms between weakly-exact embedded Lagrangians induce cone decompositions in the Fukaya category.
The generalisation to unobstructed---but not necessarily tautologically unobstructed---immersed cobordisms is expected to be true but has not yet appeared in the literature.
We will assume:
\begin{assumption}\label{assumption:decompositions}
    Unobstructed immersed cobordisms between weakly-exact embedded Lagrangians induce cone decompositions in the Fukaya category.
\end{assumption}

It follows from the assumption above that there is a well-defined map:
\begin{equation}\label{eq:Cob_to_K0}
    \Cob(X) \to K_0(\mcal Fuk(X))
\end{equation}
sending each Lagrangian $L \in \mcal L$ to its class $[L] \in K_0(\mcal Fuk(X(B)))$.
\begin{remark}
    \Cref{assumption:decompositions} is \emph{not} required for \Cref{thm:filtration_terminates}---the main result of this paper---nor for any of the results about the Fourier transform in \Cref{sec:symplecticfourier}.
    It is only needed to construct the map of \Cref{eq:Cob_to_K0} and to relate the cobordism group and the Chow group of the mirror as in the following Proposition.
\end{remark}
\begin{prop}\label{prop:Cob_to_CH_ringhom}
    Let $\Cobfib(X) = (\Cobfib(X), +, \star)$ be the fibered cobordism group of $X = X(B)$ equipped with the ring structure coming from Pontryagin product.
    The composite map
    $$
    \begin{tikzcd}
        K_0(\mcal Fuk(X)) \arrow[r,"\sim"] & K_0(D^bCoh(A)) \arrow[d,"ch"] \\
        \Cobfib(X) \arrow[u] \arrow[r,dashed,"\Psi"] & \CH^*(A)_\Q
    \end{tikzcd}
    $$
    induces a ring homomorphism $\Cobfib(X)  \to \CH_0(A)$.
\end{prop}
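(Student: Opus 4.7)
The plan is to verify three things: that the image of $\Psi$ lands in $\CH_0(A)_\Q$ (as claimed), that $\Psi$ is a group homomorphism, and that $\Psi$ intertwines the two Pontryagin products. The first two are essentially formal, so the content of the proposition is really in the third item.

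First I would show the image lies in $\CH_0(A)_\Q$. By the very construction of Abouzaid's family Floer functor $\Phi_B$, a Lagrangian torus fiber $F_b$ (equipped with a brane structure and a $\Lambda^\times$-local system $\xi_b$) is mirror to a skyscraper sheaf $\mcal O_{y}$ for a single point $y = y_{(b,\xi_b)} \in A$; these are precisely the generators of $\Cobfib(X)$ in our enhanced sense. Since $ch(\mcal O_y) = [y] \in \CH_0(A)_\Q$, the composition $\Psi$ takes values in $\CH_0(A)_\Q$. Additivity of $\Psi$ is automatic: the map $\Cob(X) \to K_0(\mcal Fuk(X))$ (granted by \Cref{assumption:decompositions}), the isomorphism $K_0(\Phi_B)$, and the Chern character $ch$ are all group homomorphisms.

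For multiplicativity, by additivity of $\Psi$ and bilinearity of both Pontryagin products, it suffices to verify the identity $\Psi(F_b \star F_{b'}) = \Psi(F_b) \star \Psi(F_{b'})$ on generators. The left-hand side equals $\Psi(F_{m(b,b')}) = [y_{m(b,b')}]$ by definition of the symplectic Pontryagin product. The right-hand side unwinds as
\[
    [y_b] \star [y_{b'}] = \mu_*([y_b] \times [y_{b'}]) = \mu_*\bigl([(y_b,y_{b'})]\bigr) = [\mu(y_b, y_{b'})],
\]
where the middle equality uses that the exterior product of two zero-cycles concentrated at points is the zero-cycle at the pair, and the last uses that $\mu$ is proper and sends the reduced point $(y_b, y_{b'})$ to the reduced point $\mu(y_b, y_{b'})$. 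The equality of the two sides therefore reduces to verifying $y_{m(b,b')} = \mu(y_b, y_{b'})$, namely that the mirror correspondence $(b, \xi_b) \leftrightarrow y_{(b,\xi_b)}$ is a group homomorphism between the symplectic Pontryagin operation on enhanced fibers (as defined above, with the local-system part given by $(t_{b'})_* \xi_b \otimes (t_b)_* \xi_{b'}$) and the multiplication $\mu$ on $A$.

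The main obstacle is precisely this last compatibility. It is built into the SYZ/family Floer construction: the identification of the enhanced fiber $(F_b, \xi_b)$ with a point of $A = (\Lambda^\times)^n / L$ is such that the log map projects to $b$, and the $\Lambda^\times$-coordinates record $\xi_b$; the group structure on $A$ from \Cref{eq:Aisquotient} is then, by \Cref{rmk:compatiblegroupstructures}, the multiplicative enhancement of $m$ on $B$, which matches tautologically with our formula for $(F_b, \xi_b) \star (F_{b'}, \xi_{b'})$. Granting this compatibility---which follows from Abouzaid's construction of $\Phi_B$---the computation above completes the proof.
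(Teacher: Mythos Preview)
Your proposal is correct and follows essentially the same approach as the paper: both reduce multiplicativity on generators to the compatibility $y_{m(b,b')} = \mu(y_b,y_{b'})$ and invoke \Cref{rmk:compatiblegroupstructures}. One small point: the statement asks for a map into the \emph{integral} group $\CH_0(A)$, and the paper explicitly notes that $ch(\mcal O_p)=[p]$ lands in $\CH_0(A)\subset\CH_0(A)_\Q$; your computation $ch(\mcal O_y)=[y]$ already gives this, so you might as well say so.
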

\begin{proof}
    First note that the Chern character maps skyscraper sheaves $\mcal O_p, p\in A$ to point classes $[p]\in \CH_0(A)$.
    This implies that, when restricted to the fibered cobordism group, the map $\Psi$ lands in the integral subgroup $\CH_0(A) \subset \CH_0(A)_\Q$.

    We now show the restriction  $\Psi: \Cobfib(X)  \to \CH_0(A)$ is a ring homomorphism.
    Let $F_{b_i} \subset X(B), i=1,2$ be two Lagrangian torus fibers and let $\mcal O_{p_i}, p_i \in A$ be the corresponding skyscraper sheaves on $A$.
    Note that for any $b \in B$ we have that  $\Psi(F_{b}) = [p]$, where $p \in A$ (or, more precisely, $\mcal O_p$) is the mirror to the Lagrangian torus fiber $F_b$.
    Then $\Psi:\Cobfib(X)  \to \CH_0(A)$ is a ring homomorphism if the point in $A$ mirror to $F_{b_1}\star F_{b_2}=F_{m(b_1,b_2)}$ is (rationally equivalent to) $\mu(p_1,p_2)$.
This is indeed the case, see \Cref{rmk:compatiblegroupstructures}.
\end{proof}

 \subsection{Fiberwise addition}\label{sec:fiberwiseaddition}

On any algebraic variety $Y$---in fact, any scheme---there is a tensor product operation turning $D^bCoh(Y)$ into a (symmetric) monoidal category. 
When $Y$ is furthermore smooth the Chow group $\CH^*(Y)$ is not just a group but a commutative graded ring, where the new operation on cycles 
$$
    \CH^i(Y) \x \CH^j(Y) \to \CH^{i+j}(Y)
$$
can be interpreted geometrically as an intersection \cite{fulton2013intersection}.
It is known that the Chern character  
$$
ch:K_0(D^bCoh(Y)) \to \CH^*(Y)_\Q
$$
intertwines these two operations.

Now suppose $X$ is homologically mirror to $Y$, so that there is an equivalence of categories $\mcal Fuk(X) \simeq D^bCoh(Y)$.
Then $\mcal Fuk(X)$ inherits formally a monoidal structure.
The geometry behind this new operation was studied by Subotic in his thesis \cite{subotic}.
Subotic showed that, in the setting of SYZ mirror symmetry---that is, in the presence of dual torus fibrations
$$
\begin{tikzcd}
    X \arrow[dr,"\pi"]& & Y \arrow[dl,"\pi^\vee"']\\
    & B
\end{tikzcd}
$$
---the tensor product operation corresponds to a `fiberwise addition' operation on $\mcal Fuk(X)$.
The precise definition of this operation requires the language of Lagrangian correspondences and their induced functors as developed by Wehrheim-Woodward and Ma'u-Wehrheim-Woodward in a series of papers \cite{wehrheim2009pseudoholomorphic, wehrheim2010functoriality,ma2018a_}.
The geometry behind this operation is however easy to understand: given two Lagrangians $L_1,L_2 \subset X$ satisfying suitable transversality conditions, their fiberwise addition $L_1\otimes L_2$ can be locally parametrized via
$$
L_1 \otimes L_2 := 
\{
    (q, p_1+ p_2) \in T^*\R^n / T^*_\Z \R^n \subset X \st q \in \pi(L_1) \cap \pi(L_2), p_i \in \pi\inv(q) \cap L_i
\}.
$$
More formally, the fiberwise addition of two Lagrangians is the geometric composition 
$$
    \mcal L_{\otimes} \circ (L_1 \x L_2)
$$
of the Lagrangian correspondences $L_1 \x L_2 \subset X \x X$ and the fiberwise addition correspondence \cite{subotic}
\begin{equation}\label{eq:fiberwiseaddition}
\mcal L_{\otimes} = \{(q, p_1, q, p_2, q, p_1+p_2) \mid q \in B, p_i \in T^*_q B\} \subset X \x X \x X.
\end{equation}

\begin{example}
    If $L_i = \Gamma(df_i)$ are the graphs of the differentials of functions $f_i: B \to \R$ (or, more generally, $f_i \in H^0(C^\infty/\Aff)$), then $L_1\otimes L_2 = \Gamma(d(f_1 +f_2))$.
    That is, the map
    $$
    \Gamma(d(-)): (H^0(C^\infty /\AffZ),+) \to (\mcal Ob(\mcal Fuk(X)),\otimes)
    $$
    is a group homomorphism (cf. \Cref{sec:tropicalgeometry}).
\end{example}

\subsection{Exchange of ring structures}\label{sec:exchange}
We saw in \Cref{prop:symplecticfourier} that the symplectic Fourier transform $\mathfrak F$ (the mirror to the Fourier transform $\mcal F$) is the equivalence 
$$
\mathfrak F = \varphi_\iota : \mcal Fuk(X(B)) \to \mcal Fuk(X(B^\vee))
$$ 
of \Cref{eq:gradediota} induced by the graded symplectomorphism $\iota:X(B) \to X(B^\vee)$ of \Cref{eq:iota}.
At the level of  cobordism groups, it induces the group homomorphism 
$$
L \mapsto \iota(L) =  \{(-p,q) \in X(B^\vee) \mid (q,p) \in X(B)\}.
$$

Some interesting geometry appears when we consider the two operations on Lagrangians described in \Cref{sec:pontryagin,sec:fiberwiseaddition}, namely Pontryagin product and fiberwise addition.
Recall that these are defined for a symplectic manifold \emph{plus} a choice of a Lagrangian torus fibration.
Given a Lagrangian torus fibration $\pi: X(B) \to B$, we denote the corresponding operations by $\otimes_\pi,\star_\pi$.
For such a fibration, we define the ring
$$
\Cob^{\dagger}(X(B)) \equiv (\Cob(X(B)), +, \dagger)
$$
where $\dagger \in \{\otimes_\pi,\star_\pi\}$.
\begin{prop}\label{prop:exchangeoncob}
    Let $\Cob_{\text{sec}}(X(B)) \subset \Cob(X(B))$ be the subgroup generated by Lagrangian sections of $X(B) \to B$.
    Let $\iota$ be the map defined in \Cref{eq:iota}.
    The group homomorphism
    \begin{align*}
        \mathfrak F_*:\Cobfib^{\star_\pi}(X(B)) &\to \Cob_{\text{sec}}^{\otimes_{\pi'}}(X(B^\vee)) \\
        F & \mapsto \iota(F)
    \end{align*}
    is an (injective) ring homomorphism.
\end{prop}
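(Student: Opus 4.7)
My plan is to verify the statement in three steps: first, identify the image $\iota(F_b)$ of a fiber as a constant section on the dual side; second, show that under this identification the Pontryagin product $\star_\pi$ corresponds to fiberwise addition $\otimes_{\pi'}$; and third, deduce injectivity from the fact that $\iota$ is a symplectomorphism.

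For the first step, I would compute $\iota(F_b)$ directly from the formulas. Writing $X(B) = V/\Lambda_1 \x V^\vee/\Lambda_2^\vee$ (see \Cref{eq:X(B)}), the fiber $F_b = \{b\} \x V^\vee/\Lambda_2^\vee$ is sent by $\iota(q,p) = (-p,q)$ to $\{(-p, b) \mid p \in V^\vee/\Lambda_2^\vee\}$. Combining \Cref{def:dualtropicalaffinetorus} and \Cref{rmk:dualtorusisfiber}, the first factor is $B^\vee$ and the second is canonically identified with $B$; under these identifications, $\iota(F_b)$ is exactly the constant section of $\pi': X(B^\vee) \to B^\vee$ with value $b$. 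This shows in particular that $\mathfrak F_*$ lands in $\Cob_{\text{sec}}(X(B^\vee))$, and that $\iota$ restricts to a bijection between Lagrangian fibers of $\pi$ and constant sections of $\pi'$.

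For the second step, the multiplicativity, I would note that both $\star_\pi$ and $\otimes_{\pi'}$ are controlled by the \emph{same} group structure on $B$: the Pontryagin product uses $m:B \x B \to B$ acting on the base of $\pi$ (\Cref{def:pontryagin_intro}), whereas $\otimes_{\pi'}$ adds values in the fiber of $\pi'$, which under the identification above is also $B$ with its group law. Hence $\iota(F_{b_1}) \otimes_{\pi'} \iota(F_{b_2})$ is the constant section with value $m(b_1,b_2)$, which equals $\iota(F_{m(b_1,b_2)}) = \iota(F_{b_1} \star_\pi F_{b_2})$. Additivity of $\mathfrak F_*$ is automatic from $\iota$ being a (graded) symplectomorphism, and $\mathfrak F_*$ is injective because $\iota\inv(q',p') = (p',-q')$ provides an inverse symplectomorphism, giving a two-sided inverse to $\mathfrak F_*$ at the level of the ambient cobordism groups.

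I expect the main subtlety to be verifying that the geometric composition defining $\otimes_{\pi'}$ is well-defined on pairs of constant sections, so that the right-hand side of the multiplicativity equation represents a single honest Lagrangian rather than merely a generalized correspondence. For constant sections, however, this is clean: distinct constant sections are disjoint, and the fiber product of $\Gamma_{b_1} \x \Gamma_{b_2}$ with the fiberwise addition correspondence $\mcal L_{\otimes}$ of \Cref{eq:fiberwiseaddition} is transverse, with the projection to the third factor embedding the result as the constant section at $b_1+b_2$. The ancillary bookkeeping of orientations, Spin structures, and gradings under $\iota$ is handled by the graded enhancement of \Cref{eq:gradediota}, which keeps $\iota$ compatible with \Cref{assumption:graded_spin,assumption:generators,assumption:cobordisms}, so no further obstruction arises.
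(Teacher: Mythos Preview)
Your proposal is correct and follows essentially the same approach as the paper's own proof. The paper argues a touch more succinctly by first noting that $\otimes_{\pi'}$ is $\iota$-equivariant, thereby reducing the multiplicativity to the identity $F_1 \star_\pi F_2 = F_1 \otimes_{\pi'} F_2$ inside $X(B)$ (exactly \Cref{eq:pontryagin_vs_fiberwise}), but your direct computation of $\iota(F_{b_1}) \otimes_{\pi'} \iota(F_{b_2})$ as the constant section at $m(b_1,b_2)$ amounts to the same core observation.
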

\begin{proof}
    We must prove that
    $$
    \iota(F_1) \otimes_{\pi'} \iota(F_2) = \iota(F_1 \star_\pi F_2).
    $$
    Now note that $\otimes_{\pi'}$ is $\iota$-equivariant, hence this reduces to showing that 
    $$
    \iota(F_1 \otimes_{\pi'} F_2) = \iota(F_1 \star_\pi F_2)
    $$
    i.e. $F_1 \otimes_{\pi'} F_2 = F_1 \star_\pi F_2$.
    One can check directly from \Cref{lem:pontryagin_on_cob} that $\star_\pi$ is induced from the same Lagrangian correspondence as $\otimes_{\pi'}$, completing the proof.
\end{proof}

\begin{rmk}
    The above Proposition can be used to prove \Cref{mainthm:filtration_terminates}. 
    Indeed, if 
    $$
    \mathbb L = (F_1^+ - F_1^-) \star_{\pi} \dots \star_{\pi} (F_{n+1}^+ - F_{n+1}^-)
    $$ is a generator of $F^{n+1} \Cobfib(X) = \Cobfib(X)_{\hom}^{\star n+1}$, it follows from the Proposition that $\mathbb{L} = 0$ if $\mathfrak F_*(\mathbb{L}) = 0$, which we can compute as
    \begin{align*}
        \mathfrak F_*(\mathbb{L}) &= \mathfrak F_*((F_1^+ - F_1^-) \star_{\pi} \dots \star_{\pi} (F_{n+1}^+ - F_{n+1}^-))\\
        & = \mathfrak F_*(F_1^+ - F_1^-) \otimes_{\pi'} \dots \otimes_{\pi'} \mathfrak F_*(F_{n+1}^+ - F_{n+1}^-) \qquad \tx{(\Cref{prop:exchangeoncob})}\\
        &= (\Gamma_1^+ - \Gamma_1^-) \otimes_{\pi'} \dots \otimes_{\pi'} (\Gamma_{n+1}^+ - \Gamma_{n+1}^-)
    \end{align*}
    where $\Gamma_i^\pm$ are  now flat sections of $X(B^\vee) \to B^\vee$.
    As in the proof of \Cref{thm:filtration_terminates}, we now apply \Cref{prop:fiberwise_sum_increases_codimension} to show that the above Lagrangian lives over tropical subvarieties of codimension at least $n+1$, hence vanishes.
    This is essentially the same proof as that of \Cref{mainthm:filtration_terminates}, but using the Fourier transform to exchange Pontryagin products for fiberwise sums.
\end{rmk}

\renewbibmacro{in:}{}
\def\bibrangedash{ -- }
\printbibliography

\end{document}